\crefname{assumption}{Assumption}{Assumptions}
\DeclareMathOperator*{\dom}{dom}
\DeclareMathOperator*{\domain}{dom}
\DeclareMathOperator*{\tr}{tr}
\DeclareMathOperator*{\trace}{tr}
\DeclareMathOperator*{\argmin}{arg\,min}
\DeclareMathOperator*{\ran}{ran}
\DeclareMathOperator*{\range}{ran}
\newcommand*{\Complex}{\mathbb{C}}
\newcommand*{\defeq}{\coloneqq}
\newcommand*{\E}{\mathbb{E}}
\renewcommand*{\epsilon}{\varepsilon}
\newcommand*{\cov}{\operatorname{\mathbb{C}ov}}
\newcommand*{\Naturals}{\mathbb{N}}
\newcommand*{\one}{\mathds{1}}
\newcommand*{\idop}{\operatorname{Id}}
\newcommand*{\prob}{\mathbb{P}}
\newcommand*{\qefed}{\eqqcolon}
\newcommand*{\quark}{\setbox0\hbox{$x$}\hbox to\wd0{\hss$\cdot$\hss}}
\newcommand*{\rd}{\mathrm{d}}
\newcommand*{\Reals}{\mathbb{R}}
\newcommand*{\espace}{\mathop{\textup{eig}}\nolimits}
\newcommand*{\law}{\mathscr{L}}
\newcommand*{\cE}{\mathcal{E}}
\newcommand*{\cH}{\mathcal{H}}
\newcommand*{\cX}{\mathcal{X}}
\newcommand*{\cY}{\mathcal{Y}}
\newcommand*{\Borel}{\mathcal{B}}
\newcommand*{\sigalg}{\mathcal{F}}
\newcommand*{\CXX}{C_{\!X\!X}}
\newcommand*{\CXY}{C_{\!X\!Y}}
\newcommand*{\CYX}{C_{\!Y\!X}}
\newcommand*{\CYY}{C_{\!Y\!Y}}
\newcommand*{\hatCXX}{\widehat{C}_{\!X\!X}}
\newcommand*{\hatCYX}{\widehat{C}_{\!Y\!X}}
\newcommand{\Spectrum}{\sigma}
\newcommand*{\cSpectrum}{\sigma_{\textup{c}}}
\newcommand*{\pSpectrum}{\sigma_{\textup{p}}}
\newcommand*{\rSpectrum}{\sigma_{\textup{r}}}
\renewcommand*{\geq}{\geqslant}
\renewcommand*{\leq}{\leqslant}
\renewcommand*{\mathbf}{\boldsymbol}
\newcommand{\todo}[1]{\bgroup\color{red}#1\egroup}
\newcommand{\MMsays}[1]{\bgroup\color{olive}Mattes:~#1\egroup}
\newcommand{\NMsays}[1]{\bgroup\color{olive}Nicole:~#1\egroup}
\newcommand{\TJSsays}[1]{\bgroup\color{olive}Tim:~#1\egroup}
\theoremstyle{plain}
\newtheorem{theorem}{\sffamily Theorem}[section]
\newtheorem{proposition}[theorem]{\sffamily Proposition}
\newtheorem{lemma}[theorem]{\sffamily Lemma}
\newtheorem{corollary}[theorem]{\sffamily Corollary}
\theoremstyle{definition}
\newtheorem{definition}[theorem]{\sffamily Definition}
\newtheorem{example}[theorem]{\sffamily Example}
\newtheorem{remark}[theorem]{\sffamily Remark}
\newtheorem{assumption}[theorem]{\sffamily Assumption}
\newcommand{\absval}[1]{\lvert #1 \rvert}
\newcommand{\innerprod}[2]{\langle #1 , #2 \rangle}
\newcommand{\norm}[1]{\lVert #1 \rVert}
\newcommand{\set}[2]{\{ #1 \mid #2 \}}
\newcommand{\bigabsval}[1]{\bigl\vert #1 \bigr\vert}
\newcommand{\bignorm}[1]{\bigl\Vert #1 \bigr\Vert}
\newcommand{\bigset}[2]{\bigl\{ #1 \,\big\vert\, #2 \bigr\}}
\newcommand{\Norm}[1]{\left\Vert #1 \right\Vert}
\newcommand{\Set}[2]{\left\{ #1 \,\middle\vert\, #2 \right\}}
\numberwithin{equation}{section}
\numberwithin{figure}{section}
\numberwithin{table}{section}
\newcommand*{\arXiv}[1]{\bgroup\color{blue}\href{https://arxiv.org/abs/#1}{arXiv:#1}\egroup}
\newcommand*{\doi}[1]{\bgroup\color{blue}\href{https://doi.org/#1}{doi:#1}\egroup}
\newcommand*{\email}[1]{\bgroup\color{blue}\href{mailto:#1}{#1}\egroup}
\renewcommand*{\url}[1]{\bgroup\color{blue}\href{#1}{#1}\egroup}
\setlist[enumerate]{nosep}
\setlist[itemize]{nosep}
\newcommand{\proofheadfont}{\bfseries\sffamily}
\xpatchcmd{\proof}{\itshape}{\proofheadfont}{}{}
\let\oldtitle\title
\renewcommand{\title}[1]{\oldtitle{#1}\newcommand{\theshorttitle}{#1}}
\newcommand{\shorttitle}[1]{\renewcommand{\theshorttitle}{#1}}
\let\oldauthor\author
\renewcommand{\author}[1]{\oldauthor{#1}\newcommand{\theshortauthor}{#1}}
\newcommand{\shortauthor}[1]{\renewcommand{\theshortauthor}{#1}}
\newcommand{\theabstract}[1]{\par\bgroup\noindent\textbf{\textsf{Abstract.}} #1\egroup}
\newcommand{\thekeywords}[1]{\par\smallskip\bgroup\noindent\textbf{\textsf{Keywords.}}\newcommand{\and}{ $\bullet$ } #1\egroup}
\newcommand{\themsc}[1]{\par\smallskip\bgroup\noindent\textbf{\textsf{2020 Mathematics Subject Classification.}}\newcommand{\and}{ $\bullet$ } #1\egroup}
\newcommand*{\affilref}[1]{\ref{affiliation#1}}
\newcommand*{\affiliation}[3]{
	\footnotetext[#1]{\label{affiliation#2}#3}
}
\title{Learning linear operators:\\%
Infinite-dimensional regression as a well-behaved non-compact inverse problem}
\shorttitle{Learning linear operators: Infinite-dimensional regression as an inverse problem}
\author{%
	Mattes~Mollenhauer\textsuperscript{\affilref{MXM}}%
	\and%
	Nicole M\"ucke\textsuperscript{\affilref{TUBS}}%
	\and%
	T.~J.~Sullivan\textsuperscript{\affilref{Warwick}}%
}
\date{\today}
\begin{document}
\maketitle
\affiliation{1}{MXM}{Merantix Momentum, Merantix AI Campus, Max--Urich--Straße 3, 13355 Berlin, Germany\\ (\email{mattes.mollenhauer@merantix-momentum.com})}
\affiliation{2}{TUBS}{Institut f\"ur Mathematische Stochastik, Technische Universit\"at Braunschweig, Universit\"atsplatz 2 (Forumsgeb\"aude), 38106 Braunschweig, Germany (\email{nicole.muecke@tu-bs.de})}
\affiliation{3}{Warwick}{Mathematics Institute and School of Engineering, University of Warwick, Coventry, CV4 7AL, United Kingdom\newline (\email{t.j.sullivan@warwick.ac.uk})}

\begin{abstract}\small
	\theabstract{We consider the problem of learning a linear operator
$\theta$ between two Hilbert spaces from empirical observations,
which we interpret as least squares regression
in infinite dimensions.
We show that this goal can be reformulated as an
inverse problem for $\theta$ with the
feature that its forward operator is generally non-compact
(even if $\theta$ is assumed to be compact or of $p$-Schatten class).
However, we prove that, in terms of spectral properties and regularisation theory, 
this inverse problem is equivalent to the known compact 
inverse problem associated with scalar response regression. 

Our framework allows for the elegant derivation
of dimension-free rates for generic learning algorithms
under H\"older-type source conditions.
The proofs rely on the combination of techniques from kernel regression with
recent results on concentration of measure for
sub-exponential Hilbertian random variables.
The obtained rates hold for
a variety of practically-relevant scenarios in functional regression as well as
nonlinear regression with operator-valued kernels and match
those of classical kernel regression with scalar response.
}
	\thekeywords{statistical learning%
\and%
inverse problems%
\and%
spectral regularisation%
\and%
concentration of measure%
\and%
functional data analysis%
\and%
kernel regression%
\and%
conditional mean embedding%
}
	\themsc{62J05
\and%
65J22
\and%
47A52
\and%
47A68
}
\end{abstract}

\section{Introduction}
\label{sec:introduction}

Let $X$ and $Y$ be Bochner square-integrable random variables taking values in separable Hilbert spaces $\cX$ and $\cY$ respectively.
We aim to solve the following \emph{regression problem}:
\begin{equation}
	\label{eq:regression_problem}
	\tag{RP}
    \text{minimise } \E [ \norm{ Y - \theta X }_{\cY}^{2} ] \equiv \norm{ Y - \theta X }_{L^{2} (\prob; \cY)}^{2} \text{ with respect to $\theta \in L(\cX, \cY)$},
\end{equation}
where $L(\cX, \cY)$ denotes the Banach space of bounded linear operators from $\cX$ into $\cY$.
If at least one of $\cX$ and $\cY$ has infinite dimension, then so too does the search space $L(\cX, \cY)$, and so \eqref{eq:regression_problem} is an infinite-dimensional regression problem.
We are particularly motivated by the case of infinite-dimensional $\cY$, exemplified by relevant applications in \emph{functional linear regression with functional response} \citep{RamsaySilverman2005},
non-parametric regression with \emph{vector-valued kernels} 
\citep{CaponnettoDeVito2007,LiEtAl2024, Meunier2024},
the \emph{conditional mean embedding} (\citealp{ParkMuandet2020}; \citealp{LiEtAl2022})
and inference for \emph{Hilbertian time series} \citep{Bosq2000}.
We will discuss each of these applications in the context of our results in more detail in \Cref{sec:applications}.
We assume that the \emph{joint distribution} $\law(X, Y)$ of $X$ and $Y$ is not
known explicitly and \eqref{eq:regression_problem}
must be solved approximately based on sample pairs drawn from $\law(X, Y)$. We approach this
problem by proposing an inverse problem framework which generalises finite-dimensional
\emph{ridge regression}, \emph{principal component regression}
and various other techniques including
iterative learning algorithms (e.g.\ \emph{gradient descent methods}).

In contrast to finite-dimensional linear regression, the infinite-dimensional analogue does not necessarily admit a minimiser.
Under the assumption of a \emph{linear model}, i.e.\ the existence of a bounded linear operator $\theta_{\star} \colon \cX \to \cY$ such that
\[
    Y = \theta_{\star} X + \epsilon
\]
with an exogeneous $\cY$-valued noise variable $\epsilon$ satisfying $\E[ \epsilon \vert X ] = 0$ --- which may be interpreted as the \emph{well-specified case} in the context of statistical learning theory --- it is known that the regression problem \eqref{eq:regression_problem} can be solved via the \emph{operator factorisation problem}
\begin{equation}
	\label{eq:operator_factorisation}
	\tag{OFP}
    \CYX = \theta \CXX, \quad \quad \theta \in L(\cX, \cY) ,
\end{equation}
where $\CYX \in L(\cX, \cY)$ and $\CXX \in L(\cX, \cX)$ are the \emph{covariance operators} \citep{Baker1973} associated with $X$ and $Y$, which is again related
to an well-known set of range inclusion and operator majorisation
conditions due to \citet{Douglas1966}
and the \emph{Moore--Penrose pseudoinverse} \citep{EnglHankeNeubauer1996}.
The representation of linear conditional expectations
in terms of pseudo\-inverses of covariance operators
has also been investigated recently by \citet{KlebanovEtAl2021}.

The above operator factorisation problem \eqref{eq:operator_factorisation} can be reformulated in terms of a (potentially ill-posed) \emph{linear inverse problem}
\begin{equation}
	\label{eq:inverse_problem}
	\tag{IP}
    A_{\CXX}[\theta] = \CYX, \quad \theta \in L(\cX, \cY)
\end{equation}
based on the (generally non-compact) \emph{forward operator} $A_{\CXX} \colon L(\cX, \cY) \to L(\cX, \cY)$ given by
\begin{equation*}
    A_{\CXX}[\theta] \defeq \theta \CXX .
\end{equation*}
We call the operator $A_{\CXX}$ the \emph{precomposition operator} associated with $\CXX$.

We show that, even in the \emph{misspecified case} (i.e.\ without the
assumption of a linear model), the solution to
the inverse problem \eqref{eq:inverse_problem}
still characterises the minimiser of the linear regression problem \eqref{eq:regression_problem}.
We argue that, due to this fact, our viewpoint
is the natural generalisation of finite-dimensional
least squares regression. We will revisit this perspective later on and
compare \eqref{eq:inverse_problem} to the
well-known inverse problem associated with scalar-valued response regression.

\paragraph{Main results.}
Motivated by the ultimate need for a regularised empirical solution to the inverse problem, we give a characterisation of the spectrum of the precomposition operator.
In particular it turns out that, \emph{although $A_{\CXX}$ is generally non-compact, the spectra of $A_{\CXX}$ and $\CXX$ coincide} (\Cref{thm:spectral_properties_of_A_C}).
As $\CXX$ is self-adjoint and compact, we show that the non-compact inverse problem \eqref{eq:inverse_problem} can be regularised as if it were a compact inverse problem.
Thus, one key message of this article is that,
from a regularisation standpoint, when the random variable $X$ is assumed to be infinite-dimensional,
\begin{displayquote}
    \emph{predicting an infinite-dimensional random variable $Y$ is ``just as hard'' as predicting a finite-dimensional random variable $Y$ based on information provided by $X$.}
\end{displayquote}
We show that convergence proof techniques for functional regression and kernel regression with
finite-dimensional $Y$ can be used --- with surprisingly minor adaptions --- for infinite-dimensional $Y$.
As these techniques and the empirical theory of covariance operators have seen rapid development over recent years,
this opens up a powerful arsenal of tools for the infinite-dimensional learning problem.
We underline this point by proving basic rates for generic learning algorithms
(i.e.\ regularisation strategies) by modifying approaches from the kernel community.
The rates obtained match known rates
for kernel regression under comparable assumptions.

\paragraph{Scope of this paper.}
This paper has two main goals.
\begin{enumerate}[label=(\alph*)]
	\item
	The primary focus of this paper is to investigate the \emph{population formulation}
	of linear regression for Hilbert-valued random variables.
	Although this concept has gained interest in terms
	of \emph{learning linear operators} in machine learning and
	functional data analysis, the mathematical background in the
	general context of inverse problems is novel.
	\item
	We also contribute to
	the population formulation by providing the functional-analytic framework for
	our secondary goal, namely the \emph{statistical analysis of regularised empirical solutions}. We tackle this problem
	by modifying proof techniques developed by the
	statistical learning community for kernel-based regression
	in combination with concentration bounds for sub-exponential
	Hilbertian random variables.
	However, we do not yet provide an entirely exhaustive statistical analysis;
	instead we aim to lay the groundwork for a unified empirical theory,
    with further results such as \emph{fast rates} and \emph{optimal rates}
    under additional regularity assumptions deferred to future work.
\end{enumerate}

\begin{remark}[Precomposition and operator factorisation]
	Problem \eqref{eq:operator_factorisation} is a special case of a general operator factorisation problem:
	given spaces $\cX$ and $\cY$ and operators $B \in L(\cX, \cY)$ and $C \in L(\cX, \cX)$, find an operator $\theta \in L(\cX, \cY)$ such that $B = \theta C$.
	This problem can be approached as an inverse problem \`a la \eqref{eq:inverse_problem}, i.e.\ $A_{C} [\theta] = B$, and it would certainly be interesting to explore the power and limitations of this approach.
	However, it suffices for our present purposes to consider only instances of $B = \theta C$ arising from problem \eqref{eq:regression_problem}, which offers several simplifications:
	the operators $B = \CYX$ and $C = \CXX$ are of trace class, $\CXX$ is self-adjoint and positive, and $\CYX$ is ``almost'' dominated by $\CXX$ in the sense that, for some operator $R$ of at most unit norm, $\CYX = \CYY^{1/2} R \CXX^{1/2}$ \citep[Theorem~1]{Baker1973}.
\end{remark}

\begin{remark}[Spectral theory of precomposition operators]
    Although we investigate the spectrum of the precomposition operator in order to understand the regularisation
    of \eqref{eq:inverse_problem}, our spectral-theoretic results
    apply to general precomposition operators which are not necessarily related to covariance operators.
    As composition operators can be understood as \emph{tensor products of linear operators} acting on tensor products of Hilbert spaces \citep[Section~12.4]{Aubin2000}, these results may be relevant in other disciplines involving a similar functional-analytic background.
\end{remark}

\begin{remark}[Related work]
The theory of learning linear operators on infinite-dimensional 
spaces has gained significant attention over the last years.
However, in contrast to our present work, 
the current literature seems to almost exclusively consider problem settings
and assumptions which are rather specific.
Notable examples include the 
approximation of $L^p$-space operators based on reproducing kernels
(e.g. \citealp{MollenhauerKoltai2020}, \citealp{LiEtAl2022}, 
\citealp{KosticEtAl2022}, \citealp{JinEtAl2022})
and functional regression with functional response
(e.g. \citealp{CrambesMas2013}, \citealp{HoermannKidzinski2015},
\citealp{BenatiaEtAl2017}, \citealp{ImaizumiKato2018}).
As we discuss later, estimators for these settings
can be viewed as special cases of our framework in some instances.
We also point out the more recent work by \citet{deHoopEtAl2023},
who investigate a Bayesian estimator in a Hilbertian linear model
when the eigenfunctions of the ground truth operator are known a priori,
allowing for a detailed statistical investigation.
\end{remark}

The remainder of this paper is structured as follows.
\Cref{sec:notation} gives the problem setting and fixes some
basic notation for the rest of the paper.
\Cref{sec:inverse_problem} formally introduces the precomposition
operator, investigates its mathematical properties and
its establishes its connection to the infinite-dimensional regression problem.
\Cref{sec:regularisation} lays the groundwork for
a detailed statistical analysis of regression with Hilbert--Schmidt
operators by combining spectral regularisation techniques
with concentration bounds for sub-exponential Hilbertian
random variables.
In particular, we derive basic rates under the assumption
of H\"older-type source conditions.
\Cref{sec:applications} discusses the consequences of our results
for specific statistical fields and connects them to the relevant literature.
\Cref{sec:closing}
recapitulates the key points of our work
and their implications in the abstract context of
\emph{learning infinite-dimensional information from data}
by comparing the infinite-dimensional response setting to
standard regression with real-valued responses.

\section{Notation and setup of the problem}
\label{sec:notation}

\paragraph{Random variables of interest.}
Let $X$ and $Y$ be random variables defined on a common probability space $(\Omega, \sigalg, \prob)$ and taking values in separable real Hilbert spaces $\cX$ and $\cY$ respectively.
We require that $X$ and $Y$ each have finite second moment, i.e.\ that $X$ and $Y$ lie in the Bochner spaces $L^{2}(\Omega, \sigalg, \prob; \cX)$ and $L^2(\Omega, \sigalg, \prob; \cY)$ respectively.
To save space, we usually shorten $L^{2}(\Omega, \sigalg, \prob; \cX)$ to $L^{2}(\prob; \cX)$ --- or even to $L^{2}(\prob)$ when $\cX = \Reals$.
See e.g.\ \citet[Section~II.2]{DiestelUhl1977} for a comprehensive treatment of the Bochner integral.
We write $\law(X)$ for the \emph{distribution} or \emph{law} of $X$ on the measurable space $(\cX, \Borel_{\cX})$
(i.e.\ the pushforward measure of $\prob$ under $X$), where $\Borel_{\cX}$ denotes the Borel $\sigma$-algebra of $\cX$.
We write $\sigalg^{X}$ for the $\sigma$-algebra generated by $X$, i.e.\ the coarsest one on $\Omega$ with respect to which $X$ is a measurable function from $(\Omega, \sigalg^{X})$ into $(\cX, \Borel_{\cX})$.

\paragraph{Spaces of operators.}
We write $L(\cX, \cY)$ for the Banach space of bounded linear operators from $\cX$ to $\cY$ and write either $\norm{ A }_{L(\cX, \cY)}$ or $\norm{ A }_{\cX \to \cY}$ for the operator norm of a linear operator $A \colon \cX \to \cY$.
The Banach space of compact linear operators from $\cX$ to $\cY$, also equipped with the operator norm, will be denoted $S_{\infty} (\cX, \cY)$.
For $1 \leq p < \infty$, $S_{p} (\cX, \cY) \subseteq S_{\infty} (\cX, \cY)$ denotes the Banach space of $p$-\emph{Schatten class} operators from $\cX$ to $\cY$, i.e.\ those whose singular value sequence is $p$\textsuperscript{th}-power summable, with the norm being the $\ell^{p}$ norm of the singular value sequence.
For $p=1$ we obtain the \emph{trace class operators};
for $p=2$ we obtain the \emph{Hilbert--Schmidt operators}.
We have $S_{p} (\cX, \cY) \subseteq S_{q} (\cX, \cY)$ for $p \leq q$.
The class $S_{2} (\cX, \cY)$ is a Hilbert space with respect to the inner product
\begin{equation}
	\label{eq:Hilbert--Schmidt_inner_product}
	\innerprod{ A }{ B }_{S_{2}(\cX, \cY)} \defeq \trace ( A^{\ast} B ) .
\end{equation}
Naturally, we shorten $L(\cX, \cX)$ to $L(\cX)$, $S_{p}(\cX, \cX)$ to $S_{p}(\cX)$, etc.

\paragraph{Hilbert tensor products.}
For $y \in \cY$ and $x \in \cX$, $y \otimes x \in L(\cX, \cY)$ is the \emph{rank-one operator}
\begin{equation}
	\label{eq:rank-one_operator}
\cX \ni v \mapsto (y \otimes x)(v) \defeq \innerprod{ x }{ v }_{\cX}\, y \in \cY .
\end{equation}
The Hilbert tensor product $\cY \otimes \cX$ is defined to be the completion of the linear span of all such
rank-one operators with respect to the inner product $\innerprod{ y \otimes x }{ y' \otimes x' }_{\cY \otimes \cX} \defeq \innerprod{ y }{ y' }_{\cY} \innerprod{ x }{ x' }_{\cX}$.
We will freely use the isometric isomorphisms $S_{2} (\cX, \cY) \cong \cY \otimes \cX$ and $L^{2}(\prob; \cX) \cong L^{2}(\prob; \Reals) \otimes \cX$ \citep[Chapter~12]{Aubin2000}.
Note that
\begin{equation}
	\label{eq:norm_of_outer_product}
	\norm{ y \otimes x }_{S_{p} (\cX, \cY)} = \norm{x}_{\cX} \norm{y}_{\cY} \quad \text{for any $x \in \cX$, $y \in \cY$, and $1 \leq p \leq \infty$.}
\end{equation}

\paragraph{Covariance operators.}
The (uncentred) \emph{covariance operators} \citep{Baker1973} of $Y$ with $X$, and of $X$ with itself, are given by
\begin{align}
	\label{eq:C_YX}
    \cov[ Y, X ] \defeq
    \CYX & \defeq \E [ Y \otimes X ] \in S_1(\cX, \cY) \textnormal{ and} \\
	\label{eq:C_XX}
    \cov[ X, X ] \defeq
	\CXX & \defeq \E [ X \otimes X ] \in S_1(\cX) .
\end{align}
Note that $\CYX^{\ast} = \CXY$, and so $\CXX$ is self-adjoint.
The covariance operators are the unique operators satisfying
\[
    \E[ \innerprod{y}{Y}_{\cY} \innerprod{x}{X}_{\cX} ]
    = \innerprod{ y }{ \CYX x}_{\cY} \quad \textnormal{for all }
    x \in \cX, y \in \cY.
\]
Moreover, for all $X_1, X_2 \in L^2(\prob; \cX)$,
\[
    \innerprod{ X_1 }{ X_2}_{L^2(\prob; \cX)} = \trace( \cov[X_1, X_2] ).
\]
For any $\theta \in L(\cX, \cY)$, $ \cov[ Y, \theta X ] = \cov[ Y, X ] \theta^{\ast}$ \citep[Lemma~A.6]{KlebanovEtAl2021}, which also implies that $ \cov[ \theta Y, X ] = \theta \cov[ Y, X ]$.
It can additionally be shown that
$\prob[ X \in \overline{ \range }(\CXX)  ] = 1$
\citep[Theorem~7.2.5]{HsingEubank2015},
and hence that $\prob[ X \in \ker(\CXX) \setminus \{0\} ] = 0$.

\paragraph{Pseudoinverses.}
Given $A \in L(\cX, \cY)$, its \emph{Moore--Penrose pseudoinverse} $A^{\dagger} \colon \domain (A^{\dagger}) \to \cX$ is the unique extension of the bijective operator
\[
	\bigl( A |_{\ker(A)^{\perp}} \bigr)^{-1} \colon \range(A) \to \ker(A)^{\perp}
\]
to a linear operator $A^{\dagger}$ defined on $\domain (A^{\dagger}) \defeq \range (A) \oplus \range(A)^{\perp} \subseteq \cY$ subject to the criterion that $\ker(A^{\dagger}) = \range(A)^{\perp}$.
In general, $\domain(A^{\dagger})$ is a dense but proper subpace of $\cY$ and $A^{\dagger}$ is an unbounded operator;
global definition and boundedness of $A^{\dagger}$ occur precisely when $\range(A)$ is closed in $\cY$.
For $y \in \domain (A^{\dagger})$,
\[
	A^{\dagger} y
	=
	\argmin \Set{ \norm{ x }_{\cX} }{ \vphantom{\big|}x \text{ minimises } \norm{ A x - y }_{\cY} } .
\]
In particular, for $y \in \range A$, $A^{\dagger} y$ is the minimum-norm solution of $A x = y$.
See e.g.\ \citet[Section~2.1]{EnglHankeNeubauer1996} for a comprehensive treatment of the Moore--Penrose pseudoinverse.

\section{Inverse problem: Identifying the optimal regressor}
\label{sec:inverse_problem}

The purpose of this section is to characterise the solution of the infinite-dimensional regression problem \eqref{eq:regression_problem} in terms of an inverse problem involving the \emph{precomposition operator}.

\subsection{Precomposition operator}

Here we define the precomposition operator $A_{C}$ induced by $C \in L(\cX)$ and establish its key properties.
At this stage $C$ is not required to be the (self-adjoint, positive semi-definite, trace-class) covariance operator $\CXX$, although that is the main use case that we have in mind.

\begin{definition}[Precomposition]
    Given $C \in L(\cX)$, the associated \emph{precomposition operator} $A_{C} \colon L(\cX, \cY) \to L(\cX, \cY)$ is defined by
    \[
        A_{C} [ \theta ] \defeq \theta C .
    \]
\end{definition}

\begin{lemma}[Basic properties of the precomposition operator]
    \label{lem:basic_precomposition}
    Let $C \in L(\cX)$ and $1 \leq p \leq \infty$.
    Then $A_{C} \colon L(\cX, \cY) \to L(\cX, \cY)$
    admits the following properties:
	\begin{enumerate}[label=(\alph*)]
        \item \label{lem:basic_precomposition_bounded}
            $A_{C}$ is bounded with $\norm{ A_{C} }_{L(L(\cX, \cY))} = \norm{ C }_{L(\cX)}$;
        \item
            $A_{c B + C} = c A_{B} + A_{C}$
            for all $c \in \Reals$ and $B \in L(\cX)$;
        \item
            \label{lem:basic_precomposition_injective}
            $A_{C}$ is injective if and only if $C^{\ast}$ is injective;
		\item \label{lem:basic_precomposition_Schatten}
        if $C \in S_{p} (\cX)$, then $\range(A_{C}) \subseteq S_{p} (\cX, \cY)$; and
        \item \label{lem:basic_precomposition_invariant}
        the subspaces $S_{p} (\cX, \cY) \subseteq L(\cX, \cY)$ are
        invariant under the action of $A_{C}$, i.e.
        \[
            A_{C}[ S_{p} (\cX, \cY) ] \subseteq S_{p} (\cX, \cY).
        \]
    \end{enumerate}
\end{lemma}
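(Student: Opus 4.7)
All five items are essentially folklore about precomposition and Schatten ideals, so the plan is to dispatch them in order, reusing a rank-one construction from \eqref{eq:rank-one_operator}--\eqref{eq:norm_of_outer_product} as the main technical device.

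For \ref{lem:basic_precomposition_bounded}, the upper bound $\norm{A_C[\theta]}_{\cX\to\cY} = \norm{\theta C}_{\cX\to\cY} \leq \norm{\theta}_{\cX\to\cY}\norm{C}_{L(\cX)}$ is immediate. For the matching lower bound I plan to test $A_C$ against rank-one operators: fix a unit vector $y_0 \in \cY$ and, for any unit $x_0 \in \cX$, take $\theta = y_0 \otimes x_0$, which by \eqref{eq:norm_of_outer_product} satisfies $\norm{\theta}_{\cX\to\cY} = 1$. A direct computation using \eqref{eq:rank-one_operator} gives $\theta C = y_0 \otimes (C^{\ast} x_0)$, hence $\norm{A_C[\theta]}_{\cX\to\cY} = \norm{C^{\ast} x_0}_{\cX}$. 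Taking the supremum over unit $x_0$ yields $\norm{A_C} \geq \norm{C^{\ast}}_{L(\cX)} = \norm{C}_{L(\cX)}$. Item \ref{lem:basic_precomposition_Schatten}, linearity, and \ref{lem:basic_precomposition_invariant} are then essentially free: linearity is a one-line calculation $\theta(cB+C) = c\theta B + \theta C$; items \ref{lem:basic_precomposition_Schatten} and \ref{lem:basic_precomposition_invariant} follow from the standard two-sided ideal property of the Schatten classes $S_p(\cX,\cY)$ inside $L(\cX,\cY)$, which gives $\theta C \in S_p(\cX,\cY)$ whenever either factor belongs to $S_p$.

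Item \ref{lem:basic_precomposition_injective} is the only place where a brief argument is required. The key observation is that $A_C[\theta] = 0$ means $\theta$ vanishes on $\range(C)$, and by continuity on $\overline{\range(C)} = \ker(C^{\ast})^{\perp}$. If $C^{\ast}$ is injective, then $\ker(C^{\ast})^{\perp} = \cX$, forcing $\theta = 0$. Conversely, if $C^{\ast}$ has a nonzero kernel vector $x_0$, choose any nonzero $y_0 \in \cY$ and set $\theta \defeq y_0 \otimes x_0$; the same computation as in \ref{lem:basic_precomposition_bounded} gives $\theta C = y_0 \otimes (C^{\ast} x_0) = 0$, so $A_C$ is not injective.

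The whole argument contains no real obstacle; if anything, the only subtlety is to remember that $\ker(C^{\ast})$ rather than $\ker(C)$ governs injectivity of $A_C$, which is exactly because $A_C$ acts by right-multiplication, so that the relevant annihilating condition is about the range of $C$ and hence, via $\overline{\range(C)} = \ker(C^{\ast})^{\perp}$, about $C^{\ast}$.
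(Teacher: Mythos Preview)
Your proposal is correct and follows essentially the same approach as the paper: the rank-one operators $y_0 \otimes x_0$ are used both to attain the lower bound in \ref{lem:basic_precomposition_bounded} (via $(y_0 \otimes x_0)C = y_0 \otimes (C^\ast x_0)$) and to handle the injectivity equivalence in \ref{lem:basic_precomposition_injective}, while \ref{lem:basic_precomposition_Schatten} and \ref{lem:basic_precomposition_invariant} are deferred to the Schatten ideal property. The only cosmetic difference is in the converse direction of \ref{lem:basic_precomposition_injective}: the paper argues the contrapositive by letting $\theta$ range over rank-one operators to deduce density of $\range(C)$, whereas you exhibit an explicit nonzero element of $\ker(A_C)$ from a nonzero $x_0 \in \ker(C^\ast)$ --- but this is the same idea packaged slightly differently.
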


\begin{remark}[Precomposition on $p$-Schatten classes]
	\label{rem:p_schatten}
	The invariance of the $p$-Schatten classes (\Cref{lem:basic_precomposition}\ref{lem:basic_precomposition_invariant}) allows the precomposition operator $A_{C}$ to be considered as an operator on $S_{p} (\cX, \cY)$ instead --- with its domain and range equipped with the corresponding $p$-Schatten norm.
	We indicate to which norm we refer by adding corresponding subscripts to the operator norm if it is relevant in the context.
	In this case,
	$\norm{ A_{C} }_{L(S_{p} (\cX, \cY)) \to S_{p} (\cX, \cY)} = \norm{C}_{L(\cX)}$,
	since
	\[
		\norm{ A_{C}[\theta] }_{S_{p} (\cX, \cY)} \leq \norm{C}_{L(\cX)} \norm{ \theta }_{S_{p} (\cX, \cY)}.
	\]
	for all $\theta \in S_{p} (\cX, \cY)$ and $1 \leq p \leq \infty$, and the proof
	strategy is the same as for
	\Cref{lem:basic_precomposition}\ref{lem:basic_precomposition_bounded}.
\end{remark}

\begin{proof}[Proof of \Cref{lem:basic_precomposition}]
	\begin{enumerate}[label=(\alph*)]
		\item
		The upper bound
		\[
			\norm{ A_{C} }_{L(L(\cX, \cY))} \leq \norm{ C }_{L(\cX)}
		\]
		follows from submultiplicativity of the operator norm.
		To see that equality holds, let $y \in \cY$ have unit norm.
		Then
		\begin{align*}
		\sup_{x \in \cX, \norm{x}_{\cX} = 1}
		\norm{ A_{C}[ y \otimes x] }_{L(\cX, \cY)}
		& = \sup_{x \in \cX, \norm{x}_{\cX} = 1} \norm{ (y \otimes x) C}_{L(\cX, \cY)} \\
		& = \sup_{x \in \cX, \norm{x}_{\cX} = 1} \norm{ y \otimes (C^{\ast}x)}_{L(\cX, \cY)} \\
		& = \norm{C^{\ast}}_{L(\cX)} && \text{(by \eqref{eq:norm_of_outer_product} with $p = \infty$)} \\
		& = \norm{C}_{L(\cX)} .
		\end{align*}

        \item
        Let $\theta \in L(\cX, \cY)$ and $x \in \cX$ be arbitrary.
        Then
        \[
        	A_{c B + C} [\theta] (x) \defeq \theta ( ( c B + C ) (x) ) = \theta ( c B x + C x ) = c (\theta B) x + (\theta C) (x) = c A_{B} [\theta] (x) + A_{C} [\theta] (x) ,
        \]
        i.e.\ $A_{c B + C} [\theta] = c A_{B} [\theta] + A_{C} [\theta]$, i.e.\ $A_{c B + C} = c A_{B} + A_{C}$, as claimed.

        \item Suppose that $C^{\ast}$ is injective.
          We see that $A_{C}[\theta] = \theta C = 0$ is equivalent to
          the fact that we have $\range(C) \subseteq \ker(\theta)$.
          As $\ker(\theta)$ is closed, this implies
          $\overline{\range}(C) \subseteq \ker(\theta)$.
          Using that $\overline{\range}(C) = \ker( C^{\ast} )^\perp = \cX$
          by assumption, we see that $\theta = 0$, i.e.\ $A_{C}$ is injective.

          Conversely, suppose that $A_{C}$ is injective.
          Then, for every $\theta \neq 0$,
          $A_{C}[\theta] = \theta C  \neq 0$. By letting $\theta$
          vary over all rank-one operators from $\cX$ to $\cY$,
          we see that $\range(C)$ is dense in $\cX$, which is equivalent
          to $\overline{\range}(C) = \ker(C^{\ast})^\perp = \cX$
          and finally $\ker(C^{\ast}) = \{0 \}$.

		\item Suppose that $C \in S_{p}(\cX)$ and $\theta \in L(\cX, \cY)$.
        Then $A_{C} [\theta] \defeq C \theta \in S_{p}(\cX, \cY)$ by the well-known fact that the Schatten classes are operator ideals.

        \item The proof of this claim is similar to that of \ref{lem:basic_precomposition_Schatten}.
    \end{enumerate}\vspace{-\baselineskip}
\end{proof}

\begin{remark}[Precomposition as operator tensor product]
    When the precomposition operator $A_C$ acts on
    $S_2(\cX, \cY)$, it can be shown that it is isomorphically equivalent to the
    \emph{operator tensor product} $C \otimes \idop_{\cY}$
    on the tensor product space $ \cX \otimes \cY \cong S_2(\cX, \cY)$.
    Some of the above properties of $A_C$ are proven in a more convenient way
    when using this representation.
\end{remark}

\subsection{Regression solution and inverse problem}
\label{sec:regression_solution}

The solution of the regression problem can be described by an inverse problem with the forward operator $A_{\CXX}$.
Note that, unless we restrict the
true solution to be an element of $S_2(\cX, \cY)$,
this is an inverse problem with a forward operator
acting on the Banach space $L(\cX, \cY)$.

\begin{proposition}[Inverse problem]
	\label{prop:population_solution}
	There exists an operator $\theta_{\star} \in L( \cX, \cY)$
    solving the regression problem \eqref{eq:regression_problem}
	if and only if
	\begin{equation}
		\label{eq:existence}
		\CYX \in \range( A_{\CXX}) .
	\end{equation}
    The set of
    solutions of \eqref{eq:regression_problem} is exactly given by the
    solutions of the
	\emph{inverse problem} \eqref{eq:inverse_problem}.
\end{proposition}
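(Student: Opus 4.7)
The plan is to characterise minimisers of the quadratic objective via a completing-the-square / first-order argument and then recognise the resulting normal equation as the inverse problem \eqref{eq:inverse_problem}. First I would fix an arbitrary trial operator $\theta_{\star} \in L(\cX, \cY)$, set $h \defeq \theta - \theta_{\star}$, and expand
\[
    \E[\norm{Y - \theta X}_{\cY}^{2}] = \E[\norm{Y - \theta_{\star} X}_{\cY}^{2}] - 2 \E[\innerprod{Y - \theta_{\star} X}{h X}_{\cY}] + \E[\norm{h X}_{\cY}^{2}].
\]
The crucial step is to rewrite the cross term as a trace pairing. Expanding against an orthonormal basis $(e_{i})_{i}$ of $\cY$ and invoking the defining identity $\E[\innerprod{y}{Y}_{\cY} \innerprod{x}{X}_{\cX}] = \innerprod{y}{\CYX x}_{\cY}$ (together with the analogous one for $\CXX$), I would obtain
\[
    \E[\innerprod{Y - \theta_{\star} X}{h X}_{\cY}] = \trace\bigl( (\CYX - \theta_{\star} \CXX) h^{\ast} \bigr) = \trace\bigl( (\CYX - A_{\CXX}[\theta_{\star}]) h^{\ast} \bigr),
\]
which is well defined since $\CYX, \CXX \in S_{1}$.

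For the $(\Leftarrow)$ direction, if $\CYX \in \range(A_{\CXX})$ and $\theta_{\star}$ is any pre-image, then the cross term vanishes identically in $h$, so
\[
    \E[\norm{Y - \theta X}_{\cY}^{2}] = \E[\norm{Y - \theta_{\star} X}_{\cY}^{2}] + \E[\norm{h X}_{\cY}^{2}] \geq \E[\norm{Y - \theta_{\star} X}_{\cY}^{2}],
\]
so $\theta_{\star}$ minimises \eqref{eq:regression_problem}. Equality holds iff $\E[\norm{(\theta - \theta_{\star}) X}_{\cY}^{2}] = \trace\bigl( (\theta - \theta_{\star}) \CXX (\theta - \theta_{\star})^{\ast} \bigr) = 0$, i.e.\ iff $(\theta - \theta_{\star}) \CXX^{1/2} = 0$, which is equivalent to $A_{\CXX}[\theta] = A_{\CXX}[\theta_{\star}] = \CYX$. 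Hence the set of minimisers coincides with the solution set of \eqref{eq:inverse_problem}.

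For the $(\Rightarrow)$ direction, suppose $\theta_{\star}$ is a minimiser. For every fixed $h \in L(\cX, \cY)$ the map $t \mapsto \E[\norm{Y - (\theta_{\star} + t h) X}_{\cY}^{2}]$ is a quadratic polynomial in $t \in \Reals$ whose minimum at $t = 0$ forces $\trace\bigl( (\CYX - A_{\CXX}[\theta_{\star}]) h^{\ast} \bigr) = 0$ for every $h$. Testing against rank-one operators $h = y \otimes x$ collapses this to $\innerprod{y}{(\CYX - \theta_{\star} \CXX) x}_{\cY} = 0$ for arbitrary $x \in \cX$ and $y \in \cY$, hence $A_{\CXX}[\theta_{\star}] = \CYX$; in particular a minimiser can exist only when $\CYX \in \range(A_{\CXX})$.

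The main obstacle is the trace-pairing identification of the cross term --- it is elementary but requires careful bookkeeping with the trace-class structure, since we work on the full Banach space $L(\cX, \cY)$ rather than restricting to $S_{2}(\cX, \cY)$; after this reformulation, everything reduces to standard convex-quadratic calculus, and the "minimum-norm solution iff in the range" logic mirrors the classical Moore--Penrose picture on a Hilbert space.
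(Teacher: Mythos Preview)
Your proof is correct and essentially the same as the paper's: both expand the quadratic objective, identify the cross term via the trace pairing $\innerprod{Y}{\theta X}_{L^{2}(\prob;\cY)} = \trace(\CYX \theta^{\ast})$, and read off the normal equation $\CYX = A_{\CXX}[\theta_{\star}]$. The paper packages the first-order condition as a Fr\'echet-derivative computation on $F(\theta) = \trace(-2\CYX\theta^{\ast} + \theta\CXX\theta^{\ast})$, whereas you complete the square around a candidate $\theta_{\star}$ and treat the two implications separately; your version has the minor advantage of making the sufficiency direction (that a critical point is indeed a minimiser) explicit rather than leaving it implicit in the convexity of $F$.
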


\begin{proof}
    Given some $\theta \in L(\cX, \cY)$, we expand the objective functional of \eqref{eq:regression_problem} as
    \begin{align*}
        \E [ \norm{ Y - \theta X }^2_\cY ]
        & =
        \norm{ Y }^2_{L^2(\prob; \cY)}
        - 2 \innerprod{ Y }{ \theta X}_{L^2(\prob; \cY)}
        + \norm{ \theta X }^2_{ L^2(\prob; \cY) }.
    \end{align*}
    The first term does not depend on $\theta$. It therefore
    suffices to minimise the last two terms, which
    can be reformulated as
    \begin{align*}
        - 2 \innerprod{ Y }{ \theta X}_{L^2(\prob; \cY)}
        + \norm{ \theta X }^2_{ L^2(\prob; \cY) }
        & =
        - 2 \trace( \cov[ Y, \theta X ] )
        + \trace( \cov[ \theta X, \theta X ] ) \\
        &=
        - 2 \trace( \CYX \theta^{\ast} )
        + \trace( \theta \CXX \theta^{\ast} ) \\
        &=
        \trace(- 2 \CYX \theta^{\ast} + \theta \CXX \theta^{\ast} )
        \qefed F( \theta ).
    \end{align*}
    The minimum of $F(\theta)$ is attained
    at $\theta_{\star}$ satisfying
    $\tfrac{ \rd }{ \rd \theta } F(\theta_{\star} ) = 0 \in L( L(\cX,\cY), \Reals) = L(\cX, \cY)'$.
    Computing the derivative of $F$ at a point $\theta \in L(\cX, \cY)$
    in a direction $\delta \in L(\cX, \cY)$ yields
    \[
        \frac{ \rd }{ \rd \theta } F(\theta ) [\delta] =
    \trace( - 2 \CYX\delta^{\ast} + \theta \CXX \delta^{\ast} + \delta^{\ast} \CXX \theta).
    \]
	Since the trace is linear and invariant under
    forming the adjoint, the above term simplifies to
    \begin{align*}
        \frac{ \rd }{ \rd \theta } F(\theta ) [\delta ]
		& =
		\trace( - 2 \CYX\delta^{\ast} + 2 \theta \CXX \delta^{\ast}) \\
		& =
		\trace( - 2 \CYX\delta^{\ast} + 2 A_{\CXX} [\theta] \delta^{\ast}),
	\end{align*}
	which vanishes
	at some $\theta_{\star} \in L(\cX, \cY)$
	in all directions $\delta \in L(\cX, \cY)$ if and only if
	$\CYX = A_{\CXX} [\theta_{\star} ]$, as claimed.
\end{proof}

\begin{corollary}[Set of solutions]
    \label{cor:solution_set}
    Assume that the regression problem \eqref{eq:regression_problem} admits a minimiser $\theta_{\star} \in L(\cX, \cY)$.
    Then the set of operators in $L(\cX, \cY)$ solving
    \eqref{eq:regression_problem} is exactly given by
    \begin{equation*}
        \theta_{\star} + \ker ( A_{\CXX} )
        =
        \theta_{\star} +
        \bigset{ \theta \in L(\cX, \cY) }{
            \ker( \theta )^\perp \subseteq \ker( \CXX )
        }.
    \end{equation*}
\end{corollary}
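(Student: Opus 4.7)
My plan is to reduce the statement to a characterisation of $\ker(A_{\CXX})$. First, by \Cref{prop:population_solution}, the set of minimisers of \eqref{eq:regression_problem} coincides with the set of solutions of the linear equation $A_{\CXX}[\theta] = \CYX$. Since $A_{\CXX}$ is linear (indeed, bounded by \Cref{lem:basic_precomposition}\ref{lem:basic_precomposition_bounded}) and $\theta_{\star}$ is assumed to be one particular solution, the standard affine-subspace argument for inhomogeneous linear equations shows that the full solution set is $\theta_{\star} + \ker(A_{\CXX})$. This step is purely algebraic and requires no further structure of $\CXX$.

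Next, I would identify $\ker(A_{\CXX})$ explicitly. By definition, $\theta \in \ker(A_{\CXX})$ if and only if $\theta \CXX = 0$, which is equivalent to $\range(\CXX) \subseteq \ker(\theta)$. Because $\theta$ is bounded, $\ker(\theta)$ is a closed subspace of $\cX$, so this inclusion is equivalent to
\[
    \overline{\range}(\CXX) \subseteq \ker(\theta).
\]
Here I would invoke self-adjointness of $\CXX$ to rewrite $\overline{\range}(\CXX) = \ker(\CXX^{\ast})^{\perp} = \ker(\CXX)^{\perp}$, so that the condition becomes $\ker(\CXX)^{\perp} \subseteq \ker(\theta)$.

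Finally, I would take orthogonal complements on both sides. Since $\ker(\CXX)$ and $\ker(\theta)$ are closed subspaces and orthogonal complementation reverses inclusions of closed subspaces (with $(M^{\perp})^{\perp} = M$ for any closed subspace $M$), the inclusion $\ker(\CXX)^{\perp} \subseteq \ker(\theta)$ is equivalent to $\ker(\theta)^{\perp} \subseteq \ker(\CXX)$. Combining these equivalences yields
\[
    \ker(A_{\CXX}) = \bigset{ \theta \in L(\cX, \cY) }{ \ker(\theta)^{\perp} \subseteq \ker(\CXX) },
\]
which together with the first step proves the corollary.

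There is no real obstacle here; the proof is essentially a bookkeeping exercise consisting of \Cref{prop:population_solution}, the fact that $\ker(\theta)$ is closed, and self-adjointness of $\CXX$. The only point that warrants care is the passage to orthogonal complements, where one must remember to use the closedness of $\ker(\CXX)$ to ensure that $(\ker(\CXX)^{\perp})^{\perp} = \ker(\CXX)$. Note also that the characterisation is consistent with \Cref{lem:basic_precomposition}\ref{lem:basic_precomposition_injective}: injectivity of $A_{\CXX}$ corresponds to $\ker(\CXX) = \{0\}$, which via the above is equivalent to $\ker(\theta)^{\perp} = \{0\}$ forcing $\theta = 0$, recovering injectivity of $\CXX = \CXX^{\ast}$.
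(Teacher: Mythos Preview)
Your proposal is correct and follows essentially the same route as the paper: invoke \Cref{prop:population_solution} to reduce to the affine space $\theta_{\star} + \ker(A_{\CXX})$, then characterise $\ker(A_{\CXX})$ via $\range(\CXX) \subseteq \ker(\theta)$ and pass to orthogonal complements using self-adjointness of $\CXX$. The paper's version is terser (it writes $\ker(\theta)^\perp \subseteq \range(\CXX)^\perp = \ker(\CXX)$ in one line), but the content is identical.
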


\begin{proof}
    The fact that $\theta_{\star} + \ker ( A_{\CXX} )$ is the set of solutions of \eqref{eq:regression_problem}
    is the immediate consequence of \Cref{prop:population_solution}.
    Furthermore, it is clear that $\ker( A_{\CXX})$ consists precisely of the operators $\theta \in L(\cX, \cY)$ which
    satisfy $ \theta \CXX = 0$, which are exactly those operators that satisfy the condition
    $\ker(\theta)^\perp \subseteq \range(\CXX)^\perp = \ker(\CXX)$.
\end{proof}

Intuitively, \Cref{cor:solution_set} describes the fact that an optimal regressor $\theta_{\star} \in L(\cX, \cY)$
may be arbitrarily modified in basis directions of the space $\cX$ which do not lie in the support of $\law(X)$, while still leading to a minimiser of \eqref{eq:regression_problem}.
As a consequence, the minimiser $\theta_{\star}$ is unique whenever the distribution of $X$ has support on all basis directions of $\cX$.

\begin{definition}[Non-degenerate distribution]
	We call the distribution $\law(X)$ of $X$ on $(\cX, \Borel_{\cX})$ \emph{non-degenerate} if $\CXX$ is injective.
	Note that this is equivalent to the condition $\E[ \innerprod{x}{X}^2_\cX ] \neq 0$ for all $x \in \cX \setminus \{ 0 \}$.
\end{definition}

\begin{corollary}[Unique solution]
	\label{cor:unique_regression_iff_nondegenerate}
    Assume that a minimiser $\theta_{\star} \in L(\cX, \cY)$ of the regression problem \eqref{eq:regression_problem} exists.
    Then $\theta_{\star}$ is unique if and only if the distribution $\law(X)$ is non-degenerate.
\end{corollary}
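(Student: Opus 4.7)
The plan is to chain together three earlier results: \Cref{cor:solution_set} describes the solution set as an affine translate of $\ker(A_{\CXX})$, \Cref{lem:basic_precomposition}\ref{lem:basic_precomposition_injective} characterises injectivity of the precomposition operator via its symbol, and the definition of non-degeneracy relates injectivity of $\CXX$ to the law of $X$. The self-adjointness of $\CXX$ is what lets these fit together cleanly.

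More concretely, first I would observe that by \Cref{cor:solution_set} the solution set equals $\theta_\star + \ker(A_{\CXX})$, so uniqueness of $\theta_\star$ is equivalent to $\ker(A_{\CXX}) = \{0\}$, i.e.\ to the injectivity of $A_{\CXX}$. Next I would invoke \Cref{lem:basic_precomposition}\ref{lem:basic_precomposition_injective} to obtain that $A_{\CXX}$ is injective if and only if $\CXX^\ast$ is injective. Because $\CXX$ is self-adjoint (as noted in \Cref{sec:notation} right after \eqref{eq:C_XX}), $\CXX^\ast = \CXX$, so this is the same as injectivity of $\CXX$. Finally, by the definition of non-degeneracy of $\law(X)$ given just above the corollary, injectivity of $\CXX$ is precisely the statement that $\law(X)$ is non-degenerate. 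Chaining these equivalences yields the claim.

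There is essentially no obstacle here — the corollary is a short assembly of already-proved facts. The only thing one has to be careful about is making sure to use self-adjointness of $\CXX$ explicitly so that the injectivity criterion for $A_{\CXX}$, which is phrased in terms of $\CXX^\ast$ in \Cref{lem:basic_precomposition}\ref{lem:basic_precomposition_injective}, matches the definition of non-degeneracy, which is phrased in terms of $\CXX$ itself. One could optionally also remark on the equivalence with $\E[\langle x, X\rangle_\cX^2] \neq 0$ for all $x \neq 0$ to make the statistical content transparent, but this is merely a restatement of the definition and not needed for the proof.
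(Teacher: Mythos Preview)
Your proposal is correct and follows essentially the same approach as the paper, which simply states that the result follows directly from \Cref{lem:basic_precomposition}\ref{lem:basic_precomposition_injective}. Your version is more explicit in spelling out the chain via \Cref{cor:solution_set} and the self-adjointness of $\CXX$, but the underlying argument is identical.
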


\begin{proof}
    This follows directly from
    \Cref{lem:basic_precomposition}\ref{lem:basic_precomposition_injective}.
\end{proof}

\Cref{cor:solution_set,cor:unique_regression_iff_nondegenerate} assume the existence of a minimiser $\theta_{\star}$ for \eqref{eq:regression_problem}, and so we now turn our attention to necessary and sufficient conditions for this hypothesis to hold.

\subsection{\texorpdfstring{Existence of the minimiser $\theta_{\star} \in L(\cX, \cY)$}{Existence of the minimiser}}
\label{sec:inverse_problem_existence}

In contrast to finite-dimensional regression problems, the
existence of the regression solution $\theta_{\star} \in L(\cX, \cY)$ is
not clear, and so this issue deserves special attention in the Hilbertian case.
We now give alternative characterisations
of the necessary and sufficient condition \eqref{eq:existence}
for the existence of $\theta_{\star}$
and investigate in which practical scenarios these conditions are satisfied.

If $\law(X)$ is non-degenerate, then ``most'' (in the sense of topological density) covariance operators $\CYX$ lie in the range of $A_{\CXX}$:

\begin{remark}[Range of $A_{\CXX}$ is dense]
	\label{rmk:range_ACXX_dense}
    Let us briefly consider the operator
    $A_{\CXX}$ acting on the Hilbert space $S_2(\cX, \cY)$ as described
    in \cref{rem:p_schatten}.
    We assume that $\CXX$ is injective.
    Using the fact that $A_{\CXX}$ is self-adjoint
    with respect to $\innerprod{\quark}{\quark}_{S_2(\cX, \cY)}$
    (proved in
    \Cref{thm:spectral_properties_of_A_C}\ref{thm:spectral_properties_of_A_C_adjoint})
    together with
    the equivalence $\CXX$ injective $\iff A_{\CXX}$ injective
    from \Cref{lem:basic_precomposition}\ref{lem:basic_precomposition_injective},
    we see that
	\[
        \overline{ \range (A_{\CXX}) }^{\norm{ \quark }_{S_{2}(\cX, \cY)}}
        = \ker (A_{\CXX})^{\perp} = S_{2}(\cX, \cY) ,
	\]
    i.e.\ $\range (A_{\CXX})$ is dense in $S_{2}(\cX, \cY)$
    with respect to the Hilbert--Schmidt norm.
    It follows that $\range (A_{\CXX})$ is dense in $S_{\infty}(\cX, \cY)$
    with respect to the operator norm.
	In particular, since $\CYX$ is trace class and hence compact,
    it lies in the operator-norm closure of $\range (A_{\CXX})$
    provided $\law(X)$ is non-degenerate.
\end{remark}

Before investigating the condition for the existence
of $\theta_{\star}$ given in \eqref{eq:existence} in more
detail, we justify our approach with a classical example:
If the true relationship between
$Y$ and $X$ is linear,
then our theory naturally
gives the ``correct'' solution to the regression problem.

\begin{example}[Linear model]
    \label{ex:linear_model}
    Assume that the
    relation between $Y$ and $X$ is given in
    terms of the \emph{linear model}
    \begin{equation}
        \label{eq:linear_model}
        \tag{LM}
        Y = \theta_{\star} X + \varepsilon
    \end{equation}
    for some $\theta_{\star} \in L(\cX, \cY)$ and
    $\varepsilon \in L^2(\prob; \cY)$ such that
    the \emph{exogeneity condition}
    $\E[ \varepsilon \vert X ] = 0 $ holds.
    In this case, $\CYX \in \range( A_{\CXX} )$,
    since \eqref{eq:linear_model} yields
    \begin{align*}
         Y \otimes X & = \theta_{\star} X \otimes X + \varepsilon \otimes X \\
         \implies \quad \CYX  & =  \theta_{\star} \CXX + C_{\varepsilon X}.
    \end{align*}
    Since
    $C_{\varepsilon X} = \E[ \varepsilon \otimes X ]
    = \E[ \, \E[ \varepsilon \vert X] \otimes X ] = 0$,
    this implies $\CYX = A_{\CXX}[\theta_{\star} ]$.
    This confirms \cref{prop:population_solution}
    in the setting that the linear regression problem is
    \emph{well-specified} in the context of statistical
    learning theory.
    Note that the linear model above is equivalent to
    assuming the validity of
    the \emph{linear conditional expectation} (\citealp{KlebanovEtAl2021})
    representation
    \begin{equation}
        \label{eq:LCE}
        \tag{LCE}
        \E[ Y \vert X] = \theta_{\star} X \quad \prob\text{-a.s.}
    \end{equation}
    This equivalence is easy to see, since \eqref{eq:linear_model} and the exogeneity condition together yield \eqref{eq:LCE}.
    For the converse implication, we consider
    the orthogonal projection operator
    $\Pi \colon L^2(\prob; \cY) \to L^2(\prob; \cY)$
    onto the closed
    subspace $L^2(\Omega, \sigalg^{X}, \prob; \cY) \subseteq L^2(\prob; \cY)$
    of $\sigalg^{X}$-measurable functions
    and $\smash{ \Pi^\perp \defeq \idop_{L^2(\prob; \cY)} - \Pi} $.
    Using this notation, the characterisation of the conditional expectation
    as an $L^2$-projection yields that $\Pi Y = \E[ Y \vert X ]$.\footnote{%
        See e.g.\ \citet[Chapter 8]{Kallenberg2021} for
    the real-valued conditional expectation.
    The vector-valued case follows analogously in the
    context of the Bochner-$L^2$ space.}
    Performing the orthogonal decomposition
    $Y = \Pi Y + \Pi^{\perp} Y$
    and defining $\epsilon \defeq \Pi^\perp Y$ yield \eqref{eq:linear_model}.
    Note that $\epsilon$ satisfies the exogeneity assumption, since $\E[ \epsilon | X] = \Pi\epsilon  = \Pi (\Pi^\perp Y) = 0$.
\end{example}

Moreover, the existence of $\theta_{\star}$ can be characterised alternatively
in terms of a well-known range inclusion and majorisation condition
due to \citet{Douglas1966}.
Note that \eqref{eq:existence} can equivalently be expressed as
\begin{equation}
\label{eq:factorisation_problem}
\CYX = \theta_{\star} \CXX, \quad \theta_{\star} \in L(\cX, \cY),
\end{equation} which can be interpreted a
factorisation of $\CYX$ into a known operator $\CXX$ and some bounded
operator $\theta_{\star}$. This fact links \eqref{eq:regression_problem}
directly to Douglas' work.
We list important equivalent conditions
for the existence of this factorisation here for completeness
and give a closed-form solution of the regression problem.

\begin{proposition}[Existence of $\theta_{\star} \in L(\cX, \cY)$ and closed form]
    \label{prop:existence}
    The following conditions are equivalent
    to the existence of an operator $\theta_{\star} \in L(\cX, \cY)$
    which solves \eqref{eq:regression_problem}:
	\begin{enumerate}[label=(\alph*)]
    \item \label{prop:existence_douglas1}
            We have $\CYX = A_{\CXX}[\theta_{\star}] = \theta_{\star} \CXX$
            for some $\theta_{\star} \in L(\cX, \cY)$.
        \item We have $\range(\CXY) \subseteq \range(\CXX)$.
        \item \label{prop:existence_douglas3}
            The operator $\beta \CXX^2 - \CXY \CYX$
            is positive for some $\beta \in \Reals$.
        \item \label{prop:existence_bounded}
        The operator $\CXX^{\dagger} \CXY \colon \cY \to \cX$ is well-defined and bounded.
        \item \label{prop:existence_covariance_bound}
        For all $y \in \cY$,
        \begin{equation}
            \label{eq:covariance_bound}
            \sup_{x \in \cX }
            \frac{ \bigabsval{ \E[ \innerprod{x}{X}_\cX \innerprod{y}{Y}_\cY ]}}%
            { \norm{ \CXX x }_{\cX}  } < \infty.
        \end{equation}
    \end{enumerate}%
    If the above conditions are
    satisfied, then
    $\theta_{\star} \defeq ( \CXX^{\dagger} \CXY )^\ast$
    is a closed-form solution of \eqref{eq:regression_problem}.
    Moreover, $\theta_{\star} = ( \CXX^{\dagger} \CXY )^\ast$
    is the unique solution of \eqref{eq:regression_problem}
    satisfying
    \begin{equation}
        \label{eq:solution_uniqueness_criteria}
        \ker( \theta_{\star} )^\perp \subseteq \ker(\CXX)^\perp
        \text{ or equivalently }
        \theta_\star\vert_{\ker(\CXX)} = 0
    \end{equation}
    and its operator norm is given by
    \begin{equation}
        \label{eq:solution_norm}
        \norm{ \theta_{\star} }_{\cX \to \cY}
        =
        \sup_{x \in \cX} \frac{ \norm{ \CYX x }_\cY }{ \norm{ \CXX x }_\cX}.
    \end{equation}
\end{proposition}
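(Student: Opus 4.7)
My plan is to treat condition (a) as the baseline---it is the inverse-problem reformulation already supplied by \Cref{prop:population_solution}---and then bridge to the remaining conditions through three standard tools: Douglas' 1966 range-inclusion and majorisation theorem for (a)$\iff$(b)$\iff$(c), Moore--Penrose pseudoinverse theory for (b)$\iff$(d), and a Hahn--Banach / Riesz duality argument for (b)$\iff$(e). The closed-form expression, the refinement to uniqueness, and the norm formula will emerge as by-products of the same machinery.

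For the Douglas leg, I would take adjoints in (a) to rewrite the inverse problem as $\CXY = \CXX \theta_{\star}^{\ast}$---now in Douglas' canonical factorisation form with $\theta_{\star}^{\ast}$ as the unknown right-hand factor---so that his theorem immediately delivers (b) and (c), using the self-adjointness of $\CXX$ in the latter. For (b)$\iff$(d), I would note that (b) is precisely the statement $\range(\CXY) \subseteq \range(\CXX) \subseteq \dom(\CXX^{\dagger})$, so that $\CXX^{\dagger} \CXY$ is everywhere defined on $\cY$ and the closed graph theorem (via closedness of $\CXX^{\dagger}$) forces it to be bounded. Conversely, boundedness of $\CXX^{\dagger} \CXY$ places $\range(\CXY)$ inside $\dom(\CXX^{\dagger}) = \range(\CXX) \oplus \range(\CXX)^{\perp}$, and intersecting with the always-valid inclusion $\range(\CXY) \subseteq \overline{\range}(\CXX) = \ker(\CXX)^{\perp}$---which follows because $\CXX x = 0$ forces $\innerprod{x}{X}_{\cX} = 0$ almost surely and hence $\innerprod{\CXY y}{x}_{\cX} = 0$---recovers (b).

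For (b)$\iff$(e), I would rewrite the supremum via $\innerprod{\CXY y}{x}_{\cX} = \E[\innerprod{x}{X}_{\cX} \innerprod{y}{Y}_{\cY}]$ and invoke the classical duality characterisation: for a bounded self-adjoint operator $T$, a vector $v$ lies in $\range(T)$ iff $\absval{\innerprod{v}{x}}$ is dominated by a constant multiple of $\norm{Tx}$ (shown by extending the bounded linear functional $Tx \mapsto \innerprod{v}{x}$ off $\range(T)$ via Hahn--Banach and representing via Riesz). Applied to $T = \CXX$ and $v = \CXY y$, this yields the equivalence. For the closed form, $\theta_{\star} \defeq (\CXX^{\dagger} \CXY)^{\ast}$ solves $\theta_{\star} \CXX = \CYX$ by taking adjoints in $\CXX (\CXX^{\dagger} \CXY) = \CXY$, which holds because $\CXX \CXX^{\dagger}$ restricts to the orthogonal projection onto $\overline{\range}(\CXX) \supseteq \range(\CXY)$. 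The inclusion $\range(\CXX^{\dagger}) \subseteq \ker(\CXX)^{\perp}$ gives $\ker(\theta_{\star}) \supseteq \ker(\CXX)$, i.e.\ \eqref{eq:solution_uniqueness_criteria}; uniqueness under this side condition then drops out of \Cref{cor:solution_set}, since any operator lying in both $\ker(A_{\CXX}) = \{\theta : \ker(\theta)^{\perp} \subseteq \ker(\CXX)\}$ and $\{\theta : \ker(\theta)^{\perp} \subseteq \ker(\CXX)^{\perp}\}$ must vanish. Finally, \eqref{eq:solution_norm} comes from the Douglas norm identity $\norm{\theta_{\star}}^{2} = \inf\{\mu \geq 0 : \CXY \CYX \leq \mu\, \CXX^{2}\}$ combined with the elementary equivalence of $\norm{\CYX x}_{\cY}^{2} \leq \mu \norm{\CXX x}_{\cX}^{2}$ for all $x$ with $\CXY \CYX \leq \mu \CXX^{2}$.

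The main obstacle I anticipate is bookkeeping rather than depth: Douglas' theorem, pseudoinverse identities, and covariance operators each come with their own asymmetry conventions, and it is easy to drop an adjoint when chaining them together. The one genuinely substantive side point is the always-valid inclusion $\range(\CXY) \subseteq \ker(\CXX)^{\perp}$ needed in the (d)$\Rightarrow$(b) direction, which has to be extracted from the definition of $\CXY$ in terms of the underlying random variables rather than simply assumed.
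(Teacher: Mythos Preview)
Your proof is correct and tracks the paper's own argument closely: both route (a)$\iff$(b)$\iff$(c) through Douglas' theorem, and both identify (e) with Shmul'yan's range-membership characterisation. The main difference is that the paper is almost purely citation-based---it defers (d) to \citet{KlebanovEtAl2021}, (e) to \citet{Shmulyan1967} and \citet{Fillmore1971}, the uniqueness properties to \citet{Fillmore1971}, \citet{Shmulyan1967}, and \citet{AriasEtAl2008}, and the norm formula \eqref{eq:solution_norm} to \citet[Lemma~2c]{Shmulyan1967}---whereas you supply self-contained arguments at each step: a closed-graph-theorem proof for (b)$\iff$(d), a Hahn--Banach/Riesz sketch for (e), and Douglas' infimum characterisation for the norm. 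Your uniqueness argument via \Cref{cor:solution_set} also ties the result back to the paper's own framework rather than external literature. The substantive content is the same; what you gain is self-containment, at the cost of having to verify the side inclusion $\range(\CXY)\subseteq\ker(\CXX)^{\perp}$ (which you do correctly via $\CXX x=0\Rightarrow\innerprod{x}{X}_{\cX}=0$ a.s.).
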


\begin{proof}
    The equivalence of \ref{prop:existence_douglas1}--\ref{prop:existence_douglas3} is due to \citet[Theorem~1]{Douglas1966}, and the equivalence of \ref{prop:existence_bounded} follows from \citet[Theorem~A.1]{KlebanovEtAl2021}.
    The proof of the equivalence of \ref{prop:existence_covariance_bound}
    relies on a lesser known equivalent characterisation of
    Douglas' range inclusion theorem due to \citet[Lemma~1]{Shmulyan1967},
    which asserts that $x_0 \in \cX$ belongs to $\range(\CXX)$ if and only if
    \[
        \sup_{x \in \cX}
        \frac{ \absval{ \innerprod{x}{x_0}_\cX } }%
        { \norm{ \CXX x }_\cX } < \infty ;
    \]
    see also \citet[Corollary~2]{Fillmore1971}.
    Therefore, $\range(\CXY) \subseteq \range(\CXX)$
    if and only if the condition
    \[
        \sup_{x \in \cX}
        \frac{ \absval{ \innerprod{x}{ \CXY y}_\cX } }%
        { \norm{ \CXX x }_\cX } < \infty,
    \]
    is satisfied for all $y \in \cY$, proving the claim.
    The given uniqueness criteria in \eqref{eq:solution_uniqueness_criteria},
    various additional properties of the unique solution
    $\theta_{\star} = ( \CXX^{\dagger} \CXY )^\ast$
    as well as the role of the pseudoinverse
    are well known in the context of operator
    factorisation (\citealp{Fillmore1971, Shmulyan1967, AriasEtAl2008}).
    In order to obtain the norm of $\theta_{\star}$ given in
    \eqref{eq:solution_norm}, we apply
    \citet[Lemma~2c]{Shmulyan1967} to the adjoint of $\theta_{\star}$.
\end{proof}

\begin{remark}[Existence of $\theta_{\star} \in L(\cX, \cY)$]
\label{rem:existence}
	A few remarks about \cref{prop:existence} are in order.
    \begin{enumerate}[label=(\alph*)]
        \item
        We emphasise that, in general,
        $\smash{( \CXX^{\dagger} \CXY )^\ast
        \neq
        \CYX \CXX^\dagger}$.
        In particular, note that $\smash{\CXX^\dagger}$ as well
        as $\smash{\CYX \CXX^\dagger}$ are not globally defined
        whenever $\cX$ is infinite-dimensional.
        Indeed, as discussed in the proof above,
        $\smash{( \CXX^{\dagger} \CXY )^\ast }$
        is globally defined and bounded
        if and only if the equivalent conditions given in \cref{prop:existence}
        hold.
		The closed-form characterisation of the regression solution
        as $\theta_{\star} = ( \CXX^{\dagger} \CXY )^\ast$ can be
        understood as the
        \emph{mathematically informal representation}
		\[
			\theta_{\star} = A_{\CXX^\dagger}[\CYX]
		\]
		in the context of our framework.
        Note that \smash{$A_{\CXX^\dagger}$} is \emph{not}
        rigorously defined as an operator
        acting on $L(\cX, \cY)$.
		However, we argue that our approach is the natural way to
        discuss the solution of this problem, as the precomposition
        operator will later allow us to consider the direct regularisation
        of $\CXX$ in the composition
        $\theta_{\star} = ( \CXX^{\dagger} \CXY )^\ast$
        in a convenient way.
        \item The characterisation of all solutions of \eqref{eq:regression_problem}
        given in \Cref{cor:solution_set}
        together with \eqref{eq:solution_uniqueness_criteria} shows that
        $\theta_\star = (\CXX^\dagger \CXY)^*$ is the unique solution
        with the largest nullspace.
        All other solutions of \eqref{eq:regression_problem}
        perform operations on $\ker(\CXX)$ which
        are irrelevant for the regression.
		\item
		As also noted by \citet{KlebanovEtAl2021},
		it is easy to see that the equivalent conditions given in \cref{prop:existence}
		are always satisfied whenever $\cX$ is finite-dimensional,
		as in this case \smash{$\CXX^\dagger$} is always bounded.
    \end{enumerate}
\end{remark}

\subsection{Constraining the problem: Hilbert--Schmidt regression}

Instead of \eqref{eq:regression_problem}, in which $\theta$ is required merely to be a bounded operator, we may consider the constrained \emph{Hilbert--Schmidt regression problem}
\begin{equation}
    \label{eq:regression_problem_hs}
    \tag{HSRP}
    \min_{\theta \in S_2(\cX, \cY)} \E[ \norm{ Y - \theta X }_\cY^2 ].
\end{equation}
with the solution space $S_2(\cX, \cY)$.
It is straightforward to see that the results from \Cref{sec:regression_solution} remain valid for this constrained problem when
$\smash{A_{\CXX} \colon L(\cX, \cY) \to L(\cX, \cY)}$
is replaced with the modified precomposition operator
$\smash{A_{\CXX} \colon S_2(\cX, \cY) \to S_2(\cX, \cY)}$.
Instead of \eqref{eq:inverse_problem},
we now consider the modified inverse problem
\begin{equation}
    \label{eq:inverse_problem_hs}
    \tag{HSIP}
    A_{\CXX}[\theta] = \CYX, \quad \theta \in S_2(\cX, \cY).
\end{equation}
We emphasise that the ``HS'' in \eqref{eq:inverse_problem_hs} refers only to the fact that the search space consists of Hilbert--Schmidt operators $\theta$;
as we shall see, the forward operator $A_{\CXX}$ is generally not even compact, let alone Hilbert--Schmidt.
The restriction allows us to apply concepts from the rich theory of inverse problems
in Hilbert spaces \citep{EnglHankeNeubauer1996}, which significantly simplifies the regularisation and empirical solution of the regression problem.
The restriction to the solution space $S_2(\cX, \cY)$ in \eqref{eq:inverse_problem_hs} is quite standard in a multitude of practical applications of infinite-dimensional regression
(see \Cref{sec:applications}).

\begin{remark}[Existence of $\theta_{\star} \in S_2(\cX, \cY)$]
        \label{rem:existence_hs}
        The existence of an operator $\theta_{\star} \in S_2(\cX, \cY)$
        which solves
        \eqref{eq:regression_problem_hs}
        is clearly equivalent to
        $\CYX \in \range( A_{\CXX} ) $ with
        $\smash{A_{\CXX} \colon S_2(\cX, \cY) \to S_2(\cX, \cY)}$.
\end{remark}

We now express solutions of \eqref{eq:regression_problem_hs}
in terms of the pseudoinverse operator
\begin{equation*}
    A^{\dagger}_{\CXX} \colon
    S_2(\cX, \cY)
    \supseteq
    \dom(A^{\dagger}_{\CXX} )
    \to S_2(\cX, \cY),
\end{equation*}
where
$
\dom(A^{\dagger}_{\CXX} )
= \ran( A_{\CXX})
\oplus
\ran( A_{\CXX})^\perp.
$
The next result shows that, whenever a solution
of \eqref{eq:regression_problem_hs} exists,
the known closed-form solution
$\smash{\theta_{\star} = ( \CXX^{\dagger} \CXY )^\ast}$
is not only bounded but even Hilbert--Schmidt, and it
is recovered in a natural way
when the inverse problem in $S_2(\cX, \cY)$ is solved via the
pseudoinverse operator.

\begin{proposition}[Pseudoinverse solution]
    \label{prop:pseudoinverse_solution}
    Assume there exists a solution of \eqref{eq:regression_problem_hs},
    i.e.\ assume that $\CYX \in \range(A_{\CXX})$ with
    $\smash{A_{\CXX} \colon S_2(\cX, \cY) \to S_2(\cX, \cY)}$.
    Then
    \begin{equation*}
        \theta_{\star} \defeq
        ( \CXX^\dagger \CXY )^{\ast}
        =
        A_{\CXX}^\dagger[ \CYX ].
    \end{equation*}
    In particular, $\theta_{\star}$ defined in this way
    is the unique solution of \eqref{eq:regression_problem_hs}
    of minimal norm in $S_2(\cX, \cY)$.
\end{proposition}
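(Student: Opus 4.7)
The plan is to prove two claims in turn: first, that $\theta_\star := (\CXX^\dagger \CXY)^\ast$ actually lies in $S_2(\cX,\cY)$ under the hypothesis; second, that this operator coincides with $A_{\CXX}^\dagger[\CYX]$, which is by construction the unique minimum-norm solution of the restricted inverse problem in $S_2(\cX,\cY)$.

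For the first claim, I would use the hypothesis to fix some $\tilde\theta \in S_2(\cX,\cY)$ with $A_{\CXX}[\tilde\theta] = \CYX$ and compare it to $\theta_\star$, which by \Cref{prop:existence} lies in $L(\cX,\cY)$ and satisfies $\theta_\star|_{\ker(\CXX)} = 0$. Writing $P$ for the orthogonal projection of $\cX$ onto $\ker(\CXX)^\perp = \overline{\range(\CXX)}$, the identity $P\CXX = \CXX$ implies that $\tilde\theta P$ also solves the equation, and by construction it vanishes on $\ker(\CXX)$. The uniqueness statement of \Cref{prop:existence} therefore yields $\theta_\star = \tilde\theta P$, and since $\tilde\theta$ is Hilbert--Schmidt while $P$ is bounded, we conclude $\theta_\star \in S_2(\cX,\cY)$.

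For the second claim, I would characterise $\ker(A_{\CXX})$ and its Hilbert--Schmidt-orthogonal complement inside $S_2(\cX,\cY)$. Setting $Q := \idop_\cX - P$, the kernel equation $\theta\CXX = 0$ is equivalent (by density and continuity) to $\theta P = 0$, i.e.\ $\theta = \theta Q$, giving $\ker(A_{\CXX}) = \{\theta \in S_2(\cX,\cY) : \theta = \theta Q\}$. A brief cyclic-trace computation $\innerprod{\theta P}{\phi Q}_{S_2} = \trace(P\theta^\ast\phi Q) = \trace(QP\theta^\ast\phi) = 0$, combined with the decomposition $\theta = \theta P + \theta Q$, then identifies the orthogonal complement as $\ker(A_{\CXX})^\perp = \{\theta \in S_2(\cX,\cY) : \theta = \theta P\}$. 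The criterion $\theta_\star|_{\ker(\CXX)} = 0$ is exactly $\theta_\star = \theta_\star P$, placing $\theta_\star$ into $\ker(A_{\CXX})^\perp$.

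Combining these observations, $\theta_\star$ both solves $A_{\CXX}[\theta] = \CYX$ and lies in $\ker(A_{\CXX})^\perp$, so by the Hilbert-space characterisation of the Moore--Penrose pseudoinverse recalled in \Cref{sec:notation} we obtain $\theta_\star = A_{\CXX}^\dagger[\CYX]$, which is by definition the unique minimum-norm solution in $S_2(\cX,\cY)$. I expect the most delicate step to be pinning down $\ker(A_{\CXX})^\perp$ explicitly; a conceptually cleaner alternative to the trace computation would be to invoke the isomorphism $\cX \otimes \cY \cong S_2(\cX,\cY)$ mentioned in the remark after \Cref{lem:basic_precomposition}, under which $A_{\CXX}$ acts as $\CXX \otimes \idop_\cY$ and the kernel/range decomposition can be read off directly from the spectral decomposition of $\CXX$.
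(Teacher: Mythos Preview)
Your proof is correct and rests on the same core observation as the paper's: both $\theta_\star = (\CXX^\dagger\CXY)^\ast$ and $A_{\CXX}^\dagger[\CYX]$ are solutions of $\theta\CXX = \CYX$ that vanish on $\ker(\CXX)$, and this forces them to coincide. The difference is one of organisation and explicitness. You first establish $\theta_\star \in S_2(\cX,\cY)$ directly via the clean identity $\theta_\star = \tilde\theta P$, and then compute $\ker(A_{\CXX})^\perp = \{\theta : \theta = \theta P\}$ explicitly by a trace argument, so that the pseudoinverse characterisation applies on the nose. The paper instead starts from $A_{\CXX}^\dagger[\CYX]$ (which is in $S_2$ by construction), invokes \Cref{cor:solution_set} to say any two bounded solutions differ only on $\ker(\CXX)$, and then argues that both operators vanish there---leaving the link ``minimal $S_2$-norm $\Rightarrow$ vanishes on $\ker(\CXX)$'' implicit in the pseudoinverse definition. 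Your route is slightly longer but more self-contained, since you make explicit precisely the orthogonality fact ($\innerprod{\theta P}{\phi Q}_{S_2} = 0$) that the paper's argument tacitly relies on; the paper's route is shorter but leans more heavily on the earlier structural results.
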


\begin{proof}
    Assume $\CYX \in \range(A_{\CXX})$ with
    $\smash{A_{\CXX} \colon S_2(\cX, \cY) \to S_2(\cX, \cY)}$.
    By the construction of the pseudoinverse, this implies that the
    operator factorisation
    $\CYX = (A_{\CXX}^\dagger[ \CYX ]) \CXX$ holds and
    hence $A_{\CXX}^\dagger[ \CYX ] \in S_2(\cX, \cY)$ clearly solves
    the Hilbert--Schmidt regression problem
    \eqref{eq:regression_problem_hs} (and therefore
    also the bounded regression problem \eqref{eq:regression_problem})
    due to \cref{prop:population_solution}.
    Furthermore, according to \cref{prop:existence},
    the operator
    $\theta \defeq ( \CXX^\dagger \CXY )^{\ast}$
    is bounded and solves the bounded regression
    problem \eqref{eq:regression_problem}.
    Due to the characterisation of the set of bounded solutions
    in \cref{cor:solution_set},
    both solutions $\theta_{\star}$ and $A^\dagger_{\CXX}[\CYX]$
    can only differ on $\ker(\CXX)$.
    To prove their equivalence, it is thus sufficient to show that
    they coincide on $\ker( \CXX )$.
    By construction of the pseudoinverse, we have
    $ A^\dagger_{\CXX}[\CYX]|_{\ker(\CXX)} = 0$,
    since it is the minimal $S_2(\cX, \cY)$-norm
    solution of the operator factorisation
    $\CYX = \theta \CXX$ for $\theta \in L(\cX, \cY)$.
    Furthermore,
    \begin{equation*}
        \ker( \theta_{\star})
        =
        \range(\theta_{\star}^{\ast})^\perp
        =
        \range( \CXX^{\dagger} \CXY )^\perp
        \supseteq
        \range( \CXX^{\dagger} )^\perp
        =
        \ker(\CXX),
    \end{equation*}
    where the last equality follows again
    from the definition of the pseudoinverse.
    Since this implies $\theta_{\star} |_{\ker(\CXX)} = 0$,
    the claim is proven.
\end{proof}

We now give an equivalent characterisation of the requirement that
$\theta_{\star} = ( \CXX^\dagger \CXY )^{\ast} \in S_2({\cX, \cY})$
in terms of a moment condition
which can be interpreted as the Hilbert--Schmidt analogue of
the condition given in \Cref{prop:existence}\ref{prop:existence_covariance_bound}.

\begin{proposition}[Existence of $\theta_{\star} \in S_2(\cX, \cY)$]
    \label{prop:existence_hs}
    The operator $\theta_{\star} = ( \CXX^\dagger \CXY )^\ast $
    satisfies
    \begin{equation}
        \label{eq:solution_hs_norm}
        \norm{ \theta_{\star} }^2_{S_2(\cX, \cY) }
        =
        \sum_{ i \in I }
        \sup_{x \in \cX}
        \frac{ \absval{ \E[ \innerprod{x}{X}_{\cX} \innerprod{e_i}{Y}_{\cY} ] }^2 }%
        { \norm{ \CXX x }_{\cX}^2 },
    \end{equation}
    where $\{e_i\}_{i \in I}$
    is a complete orthonormal system in $\cY$
    (i.e.\ $\innerprod{ e_{i} }{ e_{j} }_{\cY} = \one [ i = j ]$ and $\overline {\operatorname{span}} \{e_i\}_{i \in I} = \cY$).
    In particular,
    the expression \eqref{eq:solution_hs_norm} is independent
    of the choice of complete orthonormal system and
    the solution $\theta_{\star}$ of \eqref{eq:regression_problem_hs} exists
    if and only if \eqref{eq:solution_hs_norm} is finite.
\end{proposition}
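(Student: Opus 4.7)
The plan is to reduce the computation of $\norm{\theta_{\star}}_{S_{2}(\cX,\cY)}^{2}$ to a sum of norms in $\cX$ of the form $\norm{\CXX^{\dagger} \CXY e_{i}}_{\cX}^{2}$, and then apply Shmulyan's lemma (which was already invoked in the proof of \Cref{prop:existence}\ref{prop:existence_covariance_bound}) to each such norm term by term.

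Concretely, I would first recall that, for any Hilbert--Schmidt operator $T \in S_{2}(\cY,\cX)$ and any complete orthonormal system $\{e_{i}\}_{i \in I}$ of $\cY$, one has $\norm{T}_{S_{2}(\cY,\cX)}^{2} = \sum_{i \in I} \norm{T e_{i}}_{\cX}^{2}$, and this is independent of the chosen system. Applying this with $T = \theta_{\star}^{\ast} = \CXX^{\dagger} \CXY$ (and using $\norm{\theta_{\star}}_{S_{2}} = \norm{\theta_{\star}^{\ast}}_{S_{2}}$) reduces the claim to showing that, for each $i \in I$,
\[
\norm{\CXX^{\dagger} \CXY e_{i}}_{\cX}^{2} = \sup_{x \in \cX} \frac{\bigabsval{\E[\innerprod{x}{X}_{\cX} \innerprod{e_{i}}{Y}_{\cY}]}^{2}}{\norm{\CXX x}_{\cX}^{2}}.
\]

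For this key identity, I would apply the refined form of Shmulyan's result already cited (\citealp{Shmulyan1967}; see also \citealp{Fillmore1971}): for $x_{0} \in \cX$, $x_{0} \in \range(\CXX)$ if and only if $\sup_{x} \absval{\innerprod{x}{x_{0}}_{\cX}}/\norm{\CXX x}_{\cX} < \infty$, and in that case the supremum equals $\norm{\CXX^{\dagger} x_{0}}_{\cX}$. Setting $x_{0} = \CXY e_{i}$ and rewriting the inner product using the defining property of the covariance operator,
\[
\innerprod{x}{\CXY e_{i}}_{\cX} = \innerprod{\CYX x}{e_{i}}_{\cY} = \E[\innerprod{x}{X}_{\cX} \innerprod{e_{i}}{Y}_{\cY}],
\]
yields the desired term-by-term formula. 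Squaring and summing over $i$ gives \eqref{eq:solution_hs_norm}; the basis-independence of the right-hand side is then inherited from the basis-independence of the Hilbert--Schmidt norm on the left.

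Finally, I would address the ``if and only if'' clause as follows. If $\theta_{\star} \in S_{2}(\cX,\cY)$ exists as a solution of \eqref{eq:regression_problem_hs}, then by \Cref{prop:pseudoinverse_solution} it coincides with $(\CXX^{\dagger}\CXY)^{\ast}$, and the computation above shows that \eqref{eq:solution_hs_norm} is finite. Conversely, if the right-hand side of \eqref{eq:solution_hs_norm} is finite, then each term is finite, and Shmulyan's criterion implies $\CXY e_{i} \in \range(\CXX) \subseteq \dom(\CXX^{\dagger})$ for every $i$; defining $T e_{i} \defeq \CXX^{\dagger} \CXY e_{i}$ and extending by linearity produces a well-defined Hilbert--Schmidt operator (since $\sum_{i} \norm{T e_{i}}_{\cX}^{2} < \infty$), whose adjoint is $\theta_{\star}$ and satisfies $\theta_{\star} \CXX = \CYX$ on $\overline{\range}(\CXX)$, hence solves \eqref{eq:regression_problem_hs} by \Cref{prop:population_solution}. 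The main obstacle I anticipate is this converse direction: one must verify carefully that the pointwise definition of $\CXX^{\dagger} \CXY$ on the basis vectors extends consistently to a bounded linear operator on all of $\cY$, rather than merely to a densely defined one; the square-summability of the basis norms is precisely what makes this work, and it guarantees Hilbert--Schmidt membership rather than just boundedness.
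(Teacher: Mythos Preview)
Your approach is essentially the same as the paper's: both compute $\norm{\theta_{\star}}_{S_{2}}^{2} = \sum_{i} \norm{\theta_{\star}^{\ast} e_{i}}_{\cX}^{2}$ and evaluate each summand via Shmulyan's lemma applied to $\CXY e_{i}$, then invoke basis-independence of the Hilbert--Schmidt norm. The paper cites \citet[Lemma~2b]{Shmulyan1967} directly for $\norm{\theta_{\star}^{\ast} y}_{\cX}$ (after noting that boundedness of $\theta_{\star}$ gives the condition \eqref{eq:solution_uniqueness_criteria} needed to apply it), whereas you phrase it as $\norm{\CXX^{\dagger} x_{0}}_{\cX}$ equalling the supremum; your treatment of the converse direction (finiteness of the sum $\Rightarrow$ existence of an $S_{2}$ solution) is in fact more explicit than the paper's, which leaves this implication largely implicit.
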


\begin{proof}
    Clearly, a necessary condition for
    $\theta_{\star} = ( \CXX^\dagger \CXY )^{\ast}$ to be
    Hilbert--Schmidt is that $\theta_{\star}$ is bounded.
    According to \Cref{prop:existence}, $\theta_{\star}$ therefore satisfies
    $\ker( \theta_{\star} )^\perp \subseteq \overline{ \range }(\CXX)$
    which is exactly the required condition for
    \citet[Lemma~2b]{Shmulyan1967} to hold. This yields
    \begin{equation*}
        \norm{ \theta_{\star}^\ast y }_{\cX} =
        \sup_{x \in \cX}
        \frac{ \absval{ \innerprod{ \CXY y}{x}_{\cX} } }%
        { \norm{ \CXX x }_{\cX} }
        =
        \sup_{x \in \cX}
        \frac{ \absval{ \E[ \innerprod{x}{X}_{\cX} \innerprod{y}{Y}_{\cY} ] } }%
        { \norm{ \CXX x }_{\cX} }
    \end{equation*}
    for all $y \in \cY$.
    Hence, by definition of the Hilbert--Schmidt norm,
    \begin{equation*}
        \norm{ \theta_{\star} }_{S_2(\cX, \cY) }^2
        =
        \norm{ \theta_{\star}^\ast }_{S_2(\cY, \cX) }^2
        =
        \sum_{ i \in I } \norm{ \theta_{\star}^\ast e_i }^2_{\cX}
        =
        \sum_{ i \in I }
        \sup_{x \in \cX}
        \frac{ \absval{ \E[ \innerprod{x}{X}_{\cX} \innerprod{e_i}{Y}_{\cY} ] }^2 }%
        { \norm{ \CXX x }_{\cX}^2 }
    \end{equation*}
    and clearly this expression is independent
    of the choice of complete orthonormal system.
\end{proof}

If \eqref{eq:regression_problem_hs} has a solution,
then we can characterise the set of all solutions of
\eqref{eq:regression_problem_hs}:

\begin{remark}[Set of Hilbert--Schmidt solutions]
    Whenever
    $\theta_{\star} =( \CXX^\dagger \CXY )^{\ast} \in S_2(\cX, \cY)$,
    the set of all solutions of \eqref{eq:regression_problem_hs}
    is given by
    \begin{equation*}
        \theta_{\star} + \ker( A_{\CXX} )
        =
        \theta_{\star} +
        \{ \theta \in S_2(\cX, \cY) |
            \ker( \theta )^\perp \subseteq \ker( \CXX )
        \}
    \end{equation*}
    with $A_{\CXX} \colon S_2(\cX, \cY) \to S_2(\cX, \cY)$,
    which is a standard characterisation in the theory of inverse problems
    (see \citealp{EnglHankeNeubauer1996}, Theorem~2.5).
\end{remark}

We emphasise that in general,
analogously to the discussion in \cref{rem:existence}, we have
$A^{\dagger}_{\CXX} \neq A_{\CXX^{\dagger}}$,
as the latter does clearly not map to
the space $S_2(\cX, \cY)$
whenever $\CXX^\dagger$ is unbounded.

\begin{remark}[Inverse problem]
	As is standard in linear inverse problems theory, one may distinguish three regimes for the	problem $\CYX = A_{\CXX} [\theta]$ with $\theta \in S_2(\cX, \cY)$:
	\begin{enumerate}[label=(\alph*)]
		\item
		$\CYX \in \range(A_{\CXX})$, in which case $\theta_{\star}$ does exist, as previously discussed;
		
		\item
		\smash{$\CYX \in \overline{\range}(A_{\CXX}) \setminus \range(A_{\CXX})$}, in which case $\CYX$ can be arbitrarily well approximated by a sequence of $\CYX^{(k)} \in \range(A_{\CXX})$, leading to approximate regression solutions $\theta^{(k)} = A_{\CXX}^{\dagger} [ \CYX^{(k)} ]$ which fail to converge in $\cX$ as $k \to \infty$ due to the unboundedness of the pseudoinverse operator $A_{\CXX}^{\dagger}$;
		
		\item
		$\CYX \notin \overline{\range}(A_{\CXX})$,
		which is ruled out as soon as $\CXX$ is injective (cf.\ \Cref{lem:basic_precomposition}\ref{lem:basic_precomposition_injective} and \Cref{rmk:range_ACXX_dense}).
	\end{enumerate}
\end{remark}

\subsection{\texorpdfstring{Spectral properties of $A_{C}$}{Spectral properties of the precomposition operator}}

The characterisation of the minimiser $\theta_{\star}$
in terms of an inverse problem motivates us to investigate the spectral
properties of the operator $A_{C}$ before we address the
regularisation and corresponding empirical solutions.

Recall the following definitions of the point, continuous, and residual spectrum of a linear operator $T \in L(\cX)$:
\begin{align*}
	\pSpectrum (T) & = \set{ \lambda \in \Complex }{ T - \lambda \idop_{\cX} \text{ is not injective} } , \\
\cSpectrum (T) & = \set{ \lambda \in \Complex }{ T - \lambda \idop_{\cX} \text{ is injective, not surjective, and } \overline{\range} (T - \lambda \idop_{\cX}) = \cX }  , \\
\rSpectrum (T) & = \set{ \lambda \in \Complex }{ T - \lambda \idop_{\cX} \text{ is injective, not surjective, and } \overline{\range} (T - \lambda \idop_{\cX}) \neq \cX }  .
\end{align*}
For $\lambda \in \Complex$, we write
\[
	\espace_{\lambda} (T) \defeq \ker (T - \lambda \idop_{\cX}) = \set{ v \in \cX }{ T v = \lambda v },
\]
which is a non-trivial subspace of $\cX$ if and only if $\lambda \in
\pSpectrum(T)$.

\begin{theorem}[Spectral properties of the precomposition operator]
	\label{thm:spectral_properties_of_A_C}
    Let $C \in L(\cX)$ and consider the operator $A_{C} \colon L(\cX, \cY) \to L(\cX, \cY)$.
	\begin{enumerate}[label=(\alph*)]
        \item
	    \label{thm:spectral_properties_of_A_C_spectra}
        The point spectra of $A_{C}$ and $C^{\ast}$ coincide and
        $\espace_{\lambda} (A_{C}) = \cY \otimes \espace_{\lambda} (C^{\ast})$.
        More precisely, for $\lambda \in \Complex$, $v \in \cX$, and $y \in \cY$,
        \[
        	\left.
        		\begin{array}{c}
        			\lambda \in \pSpectrum (C^{\ast}), \\
        			v \in \espace_{\lambda} (C^{\ast}), \\
        			y \in \cY
        		\end{array}
        	\right\}
        	\iff
        	\left\{
        		\begin{array}{c}
        			\lambda \in \pSpectrum (A_{C}), \\
        			y \otimes v \in \espace_{\lambda} (A_{C}) .
        		\end{array}
        	\right.
        \]
    \end{enumerate}
    Moreover, the operator
    $A_{C} \colon S_2(\cX, \cY) \to S_2(\cX, \cY)$
    admits the following properties:
	\begin{enumerate}[label=(\alph*), resume]
        \item
        \label{thm:spectral_properties_of_A_C_adjoint}
        The adjoint of $A_C$
        with respect to $\innerprod{ \quark }{ \quark }_{S_2(\cX, \cY)}$
        is given by $A_C^{\ast} = A_{C^{\ast}}$.
        In particular, if $C$ is self-adjoint with
        respect to $\innerprod{ \quark }{ \quark }_{\cX}$,
        then $A_{C}$ is self-adjoint with respect to
        $\innerprod{ \quark }{ \quark }_{S_2(\cX, \cY)}$.
        \item
		The spectra of $A_{C}$ and $C^{\ast}$ coincide, i.e.
        \begin{align*}
            \pSpectrum (A_{C}) &= \pSpectrum (C^{\ast}),
            \quad \text{(already covered in \ref{thm:spectral_properties_of_A_C_spectra})} \\
        	\cSpectrum (A_{C}) & = \cSpectrum (C^{\ast}) , \\
        	\rSpectrum (A_{C}) & = \rSpectrum (C^{\ast}) .
        \end{align*}

        \item
        \label{thm:spectral_properties_of_A_C_decomposition}
        Let $C$ be compact and self-adjoint with spectral
        decomposition
        \[
            C = \sum_{\lambda \in \pSpectrum(C) } \lambda P_{\espace_\lambda(C)},
        \]
        where $P_{\espace_\lambda(C)} \colon \cX \to \cX$ denotes the orthogonal projection
        operator onto $\espace_\lambda(C)$ and the above
        series expression converges in operator norm.
        Then $A_C$ has the spectral decomposition given by
        \[
            A_C = \sum_{ \lambda \in \pSpectrum(C)}
            \lambda P_{ \cY \otimes \espace_\lambda(C)},
        \]
        where
        $P_{ \cY \otimes \espace_\lambda(C)} \colon S_2(\cX, \cY) \to S_2(\cX, \cY)$
        denotes the orthogonal projection operator onto
        $\cY \otimes \espace_\lambda(C)$ and the above series converges in
        operator norm.
		\item
        \label{thm:spectral_properties_of_A_C_compact}
        Let $C$ be compact and self-adjoint.
        $A_{C}$ is compact if and only if $\cY$ is finite-dimensional.
    \end{enumerate}
\end{theorem}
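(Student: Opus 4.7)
The overall plan exploits two structural features. First, linearity of $B \mapsto A_B$ (from \Cref{lem:basic_precomposition}) gives $A_C - \lambda \idop = A_{C - \lambda\idop_\cX}$, reducing every spectral question for $A_C$ at $\lambda$ to injectivity, dense-range and surjectivity questions for $A_D$ with $D \defeq C - \lambda\idop_\cX$. Second, the Hilbert-space identification $S_{2}(\cX,\cY) \cong \cY \otimes \cX$ sends $A_C$ to $\idop_\cY \otimes C^{\ast}$, which makes the tensor-product shape of the eigenspaces geometrically transparent and supports the spectral decomposition in part~(d).

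For (a) ``$\Leftarrow$'' I would compute directly: if $C^{\ast} v = \lambda v$, then for every $u \in \cX$, $(y \otimes v)(Cu) = \innerprod{v}{Cu}_\cX y = \innerprod{C^{\ast}v}{u}_\cX y = \lambda (y \otimes v)(u)$, so $y \otimes v \in \espace_{\lambda}(A_C)$. For ``$\Rightarrow$'' and the full eigenspace identity, I would start from $\theta C = \lambda \theta$, take adjoints to get $C^{\ast}\theta^{\ast} = \lambda \theta^{\ast}$, deduce $\range(\theta^{\ast}) \subseteq \espace_{\lambda}(C^{\ast})$, and expand $\theta^{\ast}$ against an orthonormal basis of $\espace_{\lambda}(C^{\ast})$ to exhibit $\theta$ inside $\cY \otimes \espace_{\lambda}(C^{\ast})$. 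Part (b) then follows by cyclicity of the trace:
\begin{equation*}
\innerprod{A_C\theta_1}{\theta_2}_{S_{2}(\cX,\cY)}
= \trace((\theta_1 C)^{\ast}\theta_2)
= \trace(\theta_1^{\ast}\theta_2 C^{\ast})
= \innerprod{\theta_1}{A_{C^{\ast}}\theta_2}_{S_{2}(\cX,\cY)}.
\end{equation*}

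For part (c), with $D \defeq C - \lambda\idop_\cX$ I would read off three classifications: $A_D$ is injective iff $D^{\ast}$ is injective (\Cref{lem:basic_precomposition}\ref{lem:basic_precomposition_injective}); $\overline{\range}(A_D) = S_{2}(\cX,\cY)$ iff $\ker(A_D^{\ast}) = \ker(A_{D^{\ast}}) = \{0\}$ iff $D$ is injective (applying the same lemma to $A_{D^{\ast}}$ via part (b)); and $A_D$ is surjective iff $D$ is surjective. The easy direction of the last equivalence uses the open mapping theorem applied to the bijection $D$ together with the ideal property of $S_{2}(\cX,\cY)$ to form preimages $\theta = BD^{-1}$; the other direction argues by contradiction, noting that if $x_0 \notin \range(D^{\ast})$ then the rank-one operator $y \otimes x_0$ cannot be expressed as $\theta D$ for any $\theta \in S_{2}(\cX,\cY)$, since taking adjoints would force $x_0 \in \range(D^{\ast})$. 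Matching these three criteria against the definitions of continuous and residual spectrum then yields $\cSpectrum(A_C) = \cSpectrum(C^{\ast})$ and $\rSpectrum(A_C) = \rSpectrum(C^{\ast})$.

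For (d), I would begin with the spectral decomposition $\cX = \bigoplus_\lambda \espace_{\lambda}(C)$, tensor with $\cY$ to obtain $S_{2}(\cX,\cY) \cong \bigoplus_\lambda \bigl(\cY \otimes \espace_{\lambda}(C)\bigr)$, and invoke (a) to conclude that $A_C$ acts as multiplication by $\lambda$ on each summand. Operator-norm convergence would be controlled by \Cref{rem:p_schatten}: the tail equals $A_{C_{\mathrm{tail}}}$ with $\norm{A_{C_{\mathrm{tail}}}}_{L(S_{2}(\cX,\cY))} = \norm{C_{\mathrm{tail}}}_{L(\cX)} \to 0$. For (e), when $\dim \cY < \infty$ each projection $P_{\cY \otimes \espace_{\lambda}(C)}$ has finite rank, so $A_C$ is a norm limit of finite-rank operators and hence compact. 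Conversely, assuming $\dim \cY = \infty$ and $C \neq 0$, I would select a nonzero eigenpair $(\lambda_0, v)$ of $C$ and an orthonormal sequence $(e_n) \subset \cY$; then $\{e_n \otimes v\}$ is orthonormal in $S_{2}(\cX,\cY)$ while $A_C[e_n \otimes v] = \lambda_0 (e_n \otimes v)$ admits no convergent subsequence. The principal obstacle across the whole proof is the surjectivity equivalence in (c), which is not delivered by \Cref{lem:basic_precomposition} and requires the rank-one adjoint contradiction; a minor caveat worth flagging is the degenerate case $C = 0$ in (e), where $A_C = 0$ is trivially compact irrespective of $\dim\cY$.
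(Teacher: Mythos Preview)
Your proposal is correct. Parts (b) and (d) essentially match the paper's proof: trace cyclicity for the adjoint, and transferring the spectral sum via $\norm{A_{C_{\mathrm{tail}}}}_{L(S_2)} = \norm{C_{\mathrm{tail}}}_{L(\cX)}$ for operator-norm convergence. For (a), the paper takes a slightly different path, first characterising $\ker(A_D)$ as $\{\theta : \theta = \theta P_{\ker D^{\ast}}\}$ via the closed-range argument already used in \Cref{lem:basic_precomposition}, and then identifying this set with $\cY \otimes \ker D^{\ast}$; your adjoint computation $C^{\ast}\theta^{\ast} = \lambda\theta^{\ast} \Rightarrow \range(\theta^{\ast}) \subseteq \espace_{\lambda}(C^{\ast})$ reaches the same conclusion and is arguably more direct.

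The substantive differences are in (c) and (e). For (c), the paper verifies only two of your three equivalences (injectivity of $A_D$ versus $D^{\ast}$, and dense range of $A_D$ versus $D^{\ast}$) and then concludes equality of continuous and residual spectra directly. This leaves the surjectivity equivalence implicit, but that equivalence is needed to rule out the case where $D^{\ast}$ is injective with dense range but non-surjective while $A_D$ happens to be bijective (or vice versa); your rank-one adjoint argument for ``$A_D$ surjective $\Rightarrow D^{\ast}$ surjective'' is exactly the step that closes this, so your treatment of (c) is in fact more complete than the paper's. For (e), the paper invokes the spectral-theoretic criterion that a self-adjoint operator is compact iff its nonzero eigenvalues have finite multiplicity, whereas you construct an explicit orthonormal sequence $\{e_n \otimes v\}$ whose images fail to have a convergent subsequence; both are fine, yours being more elementary. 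Your flag on the degenerate case $C = 0$ is correct and not noted in the paper.
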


\begin{proof}
	\begin{enumerate}[label=(\alph*)]
		\item We first characterise the nullspace $\ker(A_C)$.
        Let $\theta \in L(\cX, \cY)$.
        Then
            \begin{align*}
                0 = A_C[\theta] = \theta C
                & \iff
                \range(C) \subseteq \ker(\theta) \\
                & \iff
                \overline{ \range}(C) \subseteq \ker(\theta) &
                \text{(since $\ker(\theta)$ is closed)} \\
                & \iff
                \ker(C^{\ast})^{\perp} \subseteq \ker(\theta),
            \end{align*}
            which is again equivalent to the condition that
            $\theta P^\perp_{\ker(C^{\ast})} = 0$,
            where $P_{\ker(C^{\ast})} \in L(\cX)$ is the orthogonal projection operator
            onto $\ker(C^{\ast})$ and
            $P^\perp_{\ker(C^{\ast})} \defeq \idop_{\cX} - P_{\ker(C^{\ast})}$
            its complement.
            In total, for every $C \in L(\cX)$,
            the nullspace $\ker(A_C)$ consists exactly of the operators
            $\theta \in L(\cX, \cY)$ which satisfy
            \[
                \theta = \theta P_{\ker(C^{\ast})}.
            \]
            Let now $\lambda \in \Complex$.
            Since we can easily verify
            $A_C - \lambda \idop_{L(\cX, \cY)}= A_{C - \lambda \idop_{\cX}}$,
            we can apply the above reasoning and obtain
            \begin{align*}
                \espace_\lambda( A_C)
                &=
                \ker( A_C -\lambda \idop_{L(\cX, \cY)} )
                = \ker(A_{C - \lambda \idop_{\cX}}) \\
                &=
                \{ \theta \in L(\cX, \cY) \mid \theta
                =  \theta P_{\ker(C^{\ast} - \lambda \idop_{\cX})} \} \\
                &=
                \{ \theta \in L(\cX, \cY) \mid \theta
                = \theta P_{ \espace_\lambda(C^{\ast})} \}.
            \end{align*}
            It is straightforward to verify that
            this set is exactly given by
            tensors in $\cY \otimes \espace_\lambda(C^{\ast})$, proving the claim.

		\item Let $\theta_{1}, \theta_{2} \in S_{2}(\cX, \cY)$
        and let $C \in L(\cX)$.
        Then
		\begin{align*}
            \innerprod{ A_C[\theta_{1}] }{ \theta_{2} }_{S_{2} (\cX, \cY)}
            & = \trace ( ( \theta_1 C)^\ast \theta_{2}  ) \\
            & = \trace( C^{\ast} \theta_1^\ast \theta_2 ) \\
            & = \trace( \theta_1^{\ast}  \theta_2 C^\ast ) &
            \text{(cyclic invariance of trace)} \\
            & = \trace( \theta_1^{\ast} A_{C^\ast}[\theta_2] ) \\
            & = \innerprod{ \theta_{1} }{ A_{C^{\ast}}[\theta_{2}] }_{S_{2} (\cX, \cY)}.
        \end{align*}

        \item Let $\lambda \in \Complex$.
        We have already covered the equality of the point spectra
        of $A_C$ and $C^{\ast}$ in
        \ref{thm:spectral_properties_of_A_C_spectra}. We now
        address the continuous and the residual spectrum.
        First, we note that $C^{\ast} - \lambda \idop_{\cX}$ is injective
        if and only if
        $A_{C - \lambda \idop_{\cX}} = A_C - \idop_{S_2(\cX, \cY)}$ is
        injective by
        \Cref{lem:basic_precomposition}\ref{lem:basic_precomposition_injective}.
        Furthermore,
        \begin{align*}
            \overline{\range}( C^{\ast} - \lambda \idop_{\cX} )
            &=
            \ker( C - \lambda \idop_{\cX} )^\perp \text{ and} \\
            \overline{\range}( A_{C} - \lambda \idop_{S_2(\cX, \cY)} )
            &=
            \ker( A^\ast_{{C} - \lambda \idop_{\cX}} )^\perp
            =
            \ker( A_{C^{\ast} - \lambda \idop_{\cX}} )^\perp,
        \end{align*}
        where we use part
        \ref{thm:spectral_properties_of_A_C_adjoint} of this theorem and
        $A_{C} - \lambda \idop_{S_2(\cX, \cY)} = A_{C - \lambda \idop_{\cX}}$
        in the second line.
        We have
        $
        \ker( C - \lambda \idop_{\cX} ) = \{0\}
        \iff
        \ker( A_{C^{\ast} - \lambda \idop_{\cX}} )= \{0\}
        $
        by
        \Cref{lem:basic_precomposition}\ref{lem:basic_precomposition_injective}.
        This implies that
        \[
        	\overline{\range}( C^{\ast} - \lambda \idop_{\cX} ) = \cX
			\iff
			\overline{\range}( A_{C} - \lambda \idop_{S_2(\cX, \cY)} ) = S_2(\cX, \cY) .
		\]
        Therefore, as claimed,
        $\lambda \in \cSpectrum(C^{\ast}) \iff \lambda \in \cSpectrum(A_C)$
        and
        $\lambda \in \rSpectrum(C^{\ast}) \iff \lambda \in \rSpectrum(A_C)$.
        \item Applying \ref{thm:spectral_properties_of_A_C_spectra}
          to the spectral decomposition of $C$ immediately proves the claim.

        \item By the spectral theorem for self-adjoint operators,
          $A_C$ is compact if and only if each of its eigenspaces
          is finite-dimensional and its spectral
          decomposition series converges in operator norm.
          Combined with the fact that
          $\espace_\lambda(A_C) = \cY \otimes \espace_\lambda(C)$ is
          finite-dimensional if and only if $\cY$ is finite-dimensional, this
          proves the claim.
	\end{enumerate}\vspace{-\baselineskip}
\end{proof}

An important consequence of the above discussion is that,
once $C$ is self-adjoint and compact, the self-adjoint
but generally non-compact $A_{C}$ 
can be conveniently manipulated (and, in particular, regularised) 
using the functional calculus \citep[Chapter~VII]{ReedSimon1980}.

\begin{corollary}[Compatibility with the functional calculus]
	\label{cor:functional_calculus}
    Let $C \in L(\cX)$ be compact and self-adjoint with the spectral decomposition
    $C = \sum_{\lambda \in \pSpectrum(C)} \lambda P_{\espace_{\lambda} (C)}$.
    If $g \colon \Reals \to \Reals$ is extended to act on
    self-adjoint Hilbert space operators
    with a discrete spectrum in terms of their spectral decompositions via
    \[
    	g(C) \defeq \sum_{\lambda \in \pSpectrum(C)} g (\lambda) P_{\espace_{\lambda} (C)} ,
    \]
    then $A_{C}$ as an operator on $S_{2} (\cX, \cY)$ satisfies
    \[
    	A_{g(C)} = g(A_{C}) ,
    \]
    with each series converging in operator norm if and only if the other does.
\end{corollary}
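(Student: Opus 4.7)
The plan is to compare the spectral decompositions of $g(C)$ and $g(A_C)$ by using the spectral decomposition of $A_C$ that is already provided by \Cref{thm:spectral_properties_of_A_C}\ref{thm:spectral_properties_of_A_C_decomposition}. By the very definition of the functional calculus applied to the self-adjoint operator $A_C$ with discrete spectrum, one reads off at once
\[
    g(A_C) = \sum_{\lambda \in \pSpectrum(C)} g(\lambda)\, P_{\cY \otimes \espace_\lambda(C)} .
\]
The task then reduces to showing that $A_{g(C)}$ admits the same series representation and that operator-norm convergence of this series is equivalent to that of the defining series for $g(C)$.

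For $A_{g(C)}$, I would exploit that the precomposition map $D \mapsto A_D$ is a linear isometry from $L(\cX)$ into $L(S_2(\cX, \cY))$, per \Cref{lem:basic_precomposition}\ref{lem:basic_precomposition_bounded} and \Cref{rem:p_schatten}. Boundedness and linearity allow interchange with operator-norm-convergent series, turning $g(C) = \sum_\lambda g(\lambda) P_{\espace_\lambda(C)}$ into
\[
    A_{g(C)} = \sum_{\lambda \in \pSpectrum(C)} g(\lambda)\, A_{P_{\espace_\lambda(C)}} ,
\]
with the implication going in both directions thanks to the isometry: corresponding partial sums of the two series have identical operator norms, so one is norm-Cauchy if and only if the other is.

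The only remaining ingredient, and the step where I expect to spend the most care, is the identification $A_{P_W} = P_{\cY \otimes W}$ for $W = \espace_\lambda(C)$. I would argue that $A_{P_W}$ is idempotent (because $P_W^2 = P_W$) and, by \Cref{thm:spectral_properties_of_A_C}\ref{thm:spectral_properties_of_A_C_adjoint} together with self-adjointness of $P_W$, also self-adjoint on $S_2(\cX, \cY)$; hence an orthogonal projection. Its range $\{\theta P_W \mid \theta \in S_2(\cX, \cY)\}$ is precisely the space of Hilbert--Schmidt operators factoring through $W$, which under $S_2(\cX, \cY) \cong \cY \otimes \cX$ coincides with $\cY \otimes W$. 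This is the same mechanism underlying \Cref{thm:spectral_properties_of_A_C}\ref{thm:spectral_properties_of_A_C_decomposition}, and it applies uniformly to every $\lambda \in \pSpectrum(C)$, including $\lambda = 0$ where $\espace_0(C) = \ker C$ may be infinite-dimensional. Substituting the identification into the previous display and matching terms with the expression for $g(A_C)$ closes the argument.
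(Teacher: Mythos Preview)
Your proposal is correct and follows essentially the same route as the paper: both apply the functional calculus to the spectral decomposition of $A_C$ from \Cref{thm:spectral_properties_of_A_C}\ref{thm:spectral_properties_of_A_C_decomposition} and match it with $A_{g(C)}$. The paper's proof is terser---it simply writes $g(A_C) = \sum_{\lambda} g(\lambda) P_{\cY \otimes \espace_\lambda(C)} = A_{g(C)}$ and declares the claim proven---whereas you supply the details the paper omits, namely the identification $A_{P_{\espace_\lambda(C)}} = P_{\cY \otimes \espace_\lambda(C)}$ and the isometry argument that transfers operator-norm convergence in both directions.
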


\begin{proof}
	By \Cref{thm:spectral_properties_of_A_C}, $A_{C} \colon S_{2} (\cX, \cY) \to S_{2} (\cX, \cY)$ has the spectral decomposition
    \[
    A_{C} = \sum_{\lambda \in \pSpectrum(C)} \lambda P_{\cY \otimes \espace_{\lambda} (C)}.
    \]
    Thus,
	\begin{align*}
		g(A_{C})
		= \sum_{\lambda \in \pSpectrum(C)} g(\lambda) P_{\cY \otimes \espace_{\lambda} (C)}
		= A_{g(C)} ,
	\end{align*}
	which proves the claim.
\end{proof}

\section{Regularisation of Hilbert--Schmidt regression}
\label{sec:regularisation}

Having discussed the general spectral theory
of the precomposition operator $A_C$ for some bounded
operator $C \in L(\cX)$, we now
return to our setting of Hilbert--Schmidt regression
and the Hilbert space inverse problem \eqref{eq:inverse_problem_hs}.
It is noteworthy that the theory of regularisation in
Banach spaces is technically more involved
than the concepts we discuss here \citep[e.g.][]{SchusterEtAl2012}.
In our investigation, we mainly rely on \Cref{thm:spectral_properties_of_A_C}
and \cref{cor:functional_calculus}, as they are sufficient
for a basic understanding of the regularisation of our inverse problem \eqref{eq:inverse_problem_hs}
in $S_2(\cX, \cY)$.

According to \Cref{prop:pseudoinverse_solution}, we may consider
the solution $\theta_{\star} = A_{\CXX}^\dagger[\CYX]$ whenever
we assume that a solution of \eqref{eq:regression_problem_hs} exists.
\Cref{thm:spectral_properties_of_A_C}\ref{thm:spectral_properties_of_A_C_compact} shows that the inverse problem is non-compact whenever $\cY$ is infinite-dimensional.
However, as the covariance operator $\CXX$ is always Hilbert--Schmidt and self-adjoint, the operator $\smash{A_{\CXX}}$ is self-adjoint on $S_2(\cX, \cY)$ and \Cref{thm:spectral_properties_of_A_C}\ref{thm:spectral_properties_of_A_C_decomposition} yields the spectral representation
\begin{equation*}
    A_{\CXX} = \sum_{ \lambda \in \pSpectrum(\CXX)}
    \lambda P_{ \cY \otimes \espace_\lambda(\CXX)}.
\end{equation*}
We now recall the concept of regularisation strategies for inverse problems in
Hilbert spaces.

\begin{definition}
    \label{def:spectral_regularisation_strategy}
	We call a family of functions $g_{\alpha} \colon [0, \infty) \to \Reals$,
    indexed by a \emph{regularisation parameter} $\alpha >0$,
    a \emph{spectral regularisation strategy}
    \citep[Section~4]{EnglHankeNeubauer1996} if it satisfies the following properties for
    all choices of $\alpha$:
	\begin{enumerate}[label=(R\arabic*)]
        \item \label{item:r1}
			$\sup_{\lambda \in [0, \infty)} \absval{ \lambda g_{\alpha}(\lambda)} \leq D$
			for some constant $D$,
        \item \label{item:r2}
			$\sup_{\lambda \in [0, \infty)} \absval{ 1- \lambda g_{\alpha}(\lambda)} \leq \gamma_0$
			for some constant $\gamma_0$, and
        \item \label{item:r3}
			$\sup_{\lambda \in [0, \infty)} \absval{ g_{\alpha}(\lambda) } < B \alpha^{-1}$,
			for some constant $B$.
	\end{enumerate}
\end{definition}

It will be convenient later to have the shorthand notation
\begin{equation}
	\label{eq:residual_of_regularisation}
	r_{\alpha}(\lambda) \defeq 1 - \lambda g_{\alpha}(\lambda)
\end{equation}
for the \emph{residual} associated to the regularisation scheme $g_{\alpha}$, and both $g_{\alpha}$ and $r_{\alpha}$ will be applied to compact self-adjoint operators using the continuous functional calculus.
Additionally, we define the \emph{qualification}
of $g_{\alpha}$ as the maximal $q$ such that
\begin{equation}
    \label{eq:qualification}
    \sup_{\lambda \in [0, \infty)}
    \lambda^{q} \absval{ r_{\alpha}(\lambda)}
    \equiv
    \sup_{\lambda \in [0, \infty)}
    \lambda^{q} \absval{ 1- \lambda g_{\alpha}(\lambda)}
    \leq \gamma_q \alpha^q
\end{equation}
for some constant $\gamma_q$ which does not depend on $\alpha$.
If \eqref{eq:qualification} holds for every positive number $q$,
then we say the regularisation strategy has arbitrary qualification.
The above requirements for regularisation strategies are also
commonly found in the context of learning theory
(see e.g.\ \citealp{BauerEtAl2007}; \citealp{GerfoEtAl2008};
\citealp{DickerEtAl2017}; \citealp{BlanchardMuecke2018}).

We call $g_{\alpha}( \smash{A_{\CXX}} )$ the \emph{regularised inverse} of $A_{\CXX}$.
According to \cref{cor:functional_calculus},
\begin{equation*}
    g_{\alpha}(A_{\CXX})
    = A_{g_{\alpha}(\CXX)}
    = \sum_{\lambda \in \pSpectrum(\CXX)}
    g_{\alpha}(\lambda) P_{\cY \otimes \espace_{\lambda} (\CXX)},
\end{equation*}
i.e.\ the regularisation of \smash{$A_{\CXX}$} is exactly given by the precomposition operation associated with the correspondingly regularised covariance operator.
We may think of the regularised inverse $g_{\alpha}( \smash{A_{\CXX}} )$ as approximating the pseudoinverse $\smash{A_{\CXX}^{\dagger}}$ pointwise on its domain as $\alpha \to 0$.

\paragraph{Regularised population solution.}
The regularised inverse $g_{\alpha}(A_{\CXX})$ is bounded for every $\alpha$ and hence we may define the \emph{regularised population solution} to \eqref{eq:inverse_problem_hs} as
\begin{equation}
    \tag{REG}
    \label{eq:regularised_population_solution}
    \theta_{\alpha}
    \defeq
    g_{\alpha}( A_{\CXX} ) [ \CYX ]
    =
    \CYX \, g_{\alpha}( \CXX ).
\end{equation}
Note that as $\CYX$ is always trace class, so is $\theta_{\alpha}$.

\begin{remark}[Regularisation strategies]
	The solution arising from standard regularisation strategies leads to
    well-known statistical methodologies.
	Two classical examples are \emph{ridge regression}, which is obtained from
	the \emph{Tikhonov--Phillips regularisation}
    $g_{\alpha}(\lambda) \defeq (\alpha + \lambda )^{-1}$,
    and \emph{principal component regression}, which is
    obtained from the \emph{spectral truncation}
    $g_{\alpha} (\lambda) \defeq \lambda^{-1} \one [ \lambda > \alpha ]$.
    Tikhonov--Phillips regularisation satisfies
    our definition of a spectral regularisation strategy with
    $D = \gamma_0 = B = 1$ and has qualification $q=1$ with
    $\gamma_q = 1$. Principal component regression
    satisfies
    $D = \gamma_0 = B = 1$ as well but has arbritrary qualification with
    $\gamma_q = 1$.
	Both of these approaches are classically used in finite-dimensional regression
    as well as kernel regression
	(\citealp{DickerEtAl2017}; \citealp{BlanchardMuecke2018}).
	We show in \Cref{sec:kernel_regression} that kernel regression
    can be formulated in terms of our infinite-dimensional linear regression problem.
	Moreover, we emphasise that this perspective yields a
    formalism for gradient descent learning, as it is well-known that
    various iterative regularisation strategies lead to specific
    implementations of gradient descent (see e.g.\ \citealp{YaoEtAl2007}
    for the context of \emph{Landweber iteration}).
\end{remark}

\paragraph{Empirical solution.}
The random variables $X$ and $Y$ are in practice only accessible through sample pairs $(X_{i}, Y_{i}) \in \cX \times \cY$ for $i = 1, \dots, n$.
For simplicity, we assume that these sample pairs are obtained i.i.d.\ from the joint law $\law(X,Y)$.
However, ergodic sampling schemes for time series applications or Markov chain Monte Carlo methods
or even deterministic approaches in the context of quasi-Monte Carlo methods would be interesting to explore here.

Given the sample pairs above, we define the \emph{empirical covariance operators} by
\begin{align*}
	\hatCXX^{}  \defeq \frac{1}{n} \sum_{i = 1}^{n} X_{i} \otimes X_{i}
    \text{ and }
	\hatCYX^{}  \defeq \frac{1}{n} \sum_{i = 1}^{n} Y_{i} \otimes X_{i}.
\end{align*}
Note that $\hatCXX$ and $\widehat{C}_{XY}$ are $\prob$-a.s.\ of rank at most $n$.
We obtain the \emph{regularised empirical solution} straightforwardly as the empirical
analogue of the regularised population solution \eqref{eq:regularised_population_solution} for some regularisation parameter $\alpha$, i.e.
\begin{equation*}
    \tag{EMP}
    \label{eq:empirical_solution}
    \widehat{\theta}_{\alpha}
    \defeq
    g_{\alpha}( A_{\hatCXX} ) [ \hatCYX ]
    =
    \hatCYX \, g_{\alpha}( \hatCXX ).
\end{equation*}

\subsection{Risk and excess risk}

We introduce the shorthand notation
$R(\theta) \defeq \E [ \norm{ Y - \theta X }_{\cY}^{2} ]$
for the risk associated with some $\theta \in S_2(\cX, \cY)$.
The Pythagorean theorem in $L^2(\prob; \cY)$ implies that the risk decomposes as
\begin{align}
    R(\theta)
    &= \norm{ \Pi (Y - \theta X) }_{L^2(\prob; \cY)}^2
    + \norm{ \Pi^\perp (Y - \theta X) }_{L^2(\prob; \cY)}^2 \nonumber \\
    &= \norm{ \E [ Y | X ] - \theta X }_{L^2(\prob; \cY)}^2
    + \norm{ Y - \E [ Y | X ] }_{L^2(\prob; \cY)}^2 .
    \label{eq:risk_decomposition}
\end{align}
Here, just as in \cref{ex:linear_model}, $\Pi \colon L^2(\prob; \cY) \to L^2(\prob; \cY)$,
denotes the orthogonal projection operator onto
$L^2(\Omega, \sigalg^{X}, \prob; \cY)$ such that $\Pi Y \defeq E[Y | X]$,
and $\smash{ \Pi^\perp \defeq \idop_{L^2(\prob; \cY)} - \Pi}$.
The second summand in the risk decomposition \eqref{eq:risk_decomposition}
is an irreducible noise term associated with the
learning problem itself and is independent of the choice of $\theta$.

We introduce the \emph{excess risk} of
the estimator $\widehat{\theta}_\alpha$ as
\[
    \mathcal{E}(\widehat{\theta}_\alpha) \defeq
    R(\widehat{\theta}_\alpha)
    - \inf_{\theta \in S_2(\cX, \cY)} R(\theta) \geq 0.
\]

\begin{remark}[Excess risk and notation]
	\label{rmk:excess_risk_and_sample_notation}
	Note that as we interpret the estimator $\widehat{\theta}_\alpha$
    as a random variable depending on the sample pairs $(X_i, Y_i)$.
	Consequently, $R(\widehat{\theta}_\alpha)$ and $\mathcal{E}(\widehat{\theta}_\alpha)$
	are also interpreted as random variables depending on $(X_i, Y_i)$.
	However, note that the risk and the excess risk contain the
    expectation operator $\E$ with respect to $X$ and $Y$.
	To prevent ambiguity and confusion, we
    introduce the convention that we write expectations and probabilities
    with respect to $(X_i, Y_i)$ distributed according to
    the product law $\law(X, Y)^{\otimes n}$ on the measurable product space
    $((\cX \times \cY)^n, \Borel_{\cX \times \cY}^{\otimes n})$ as $\E^{\otimes n}$ and
    $\prob^{\otimes n}$, while $\E$ and $\prob$ will
    always be interpreted with respect to $(X,Y)$.
\end{remark}

\begin{lemma}[Excess risk]
	\label{lem:excess_risk}
	Assume that the minimiser
    $\theta_{\star} = A^\dagger_{\CXX} [\CYX] \in S_2(\cX, \cY)$ exists.
	Then
	\begin{equation*}
		\mathcal{E}( \widehat{\theta}_\alpha )
		\leq
		\bignorm{ \theta_{\star} X - \widehat{\theta}_\alpha X }^2_{L^2(\prob; \cY)}
		+
		2 M_\star
		\bignorm{ \theta_{\star} X - \widehat{\theta}_\alpha X }_{L^2(\prob; \cY)}
        \quad \prob^{\otimes n}\text{-a.s.},
	\end{equation*}
	where $M_\star \defeq \norm{ \E [ Y | X ] - \theta_{\star} X }_{L^2(\prob; \cY)}$
    is the \emph{misspecification error} of the linear regression problem.
\end{lemma}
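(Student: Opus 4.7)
The plan is to start from the decomposition of the risk in \eqref{eq:risk_decomposition}, which separates out the irreducible noise term $\norm{ Y - \E[Y|X] }^2_{L^2(\prob;\cY)}$. Since this term is independent of $\theta$ and $\theta_{\star}$ is assumed to minimise $R$, we have $\inf_{\theta \in S_2(\cX, \cY)} R(\theta) = R(\theta_{\star}) = M_{\star}^2 + \norm{ Y - \E[Y|X] }^2_{L^2(\prob;\cY)}$, so that
\begin{equation*}
    \mathcal{E}(\widehat{\theta}_{\alpha})
    =
    \bignorm{ \E[Y|X] - \widehat{\theta}_{\alpha} X }^2_{L^2(\prob;\cY)}
    -
    M_{\star}^{2}
    \qquad \prob^{\otimes n}\text{-a.s.}
\end{equation*}

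Next I would add and subtract $\theta_{\star} X$ inside the first norm, writing $\E[Y|X] - \widehat{\theta}_{\alpha} X = (\E[Y|X] - \theta_{\star} X) + (\theta_{\star} X - \widehat{\theta}_{\alpha} X)$. Setting $u \defeq \E[Y|X] - \theta_{\star} X$ and $v \defeq \theta_{\star} X - \widehat{\theta}_{\alpha} X$ in $L^{2}(\prob;\cY)$, the binomial expansion gives
\begin{equation*}
    \bignorm{ u + v }^2_{L^2(\prob;\cY)}
    =
    \norm{ u }^2_{L^2(\prob;\cY)}
    +
    2 \innerprod{ u }{ v }_{L^2(\prob;\cY)}
    +
    \norm{ v }^2_{L^2(\prob;\cY)} ,
\end{equation*}
where $\norm{ u }^2_{L^2(\prob;\cY)} = M_{\star}^{2}$ by definition. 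Subtracting $M_{\star}^{2}$ cancels the first contribution and leaves $\mathcal{E}(\widehat{\theta}_{\alpha}) = \norm{ v }^2_{L^2(\prob;\cY)} + 2 \innerprod{ u }{ v }_{L^2(\prob;\cY)}$.

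Finally, I would apply the Cauchy--Schwarz inequality in $L^{2}(\prob;\cY)$ to the cross term to obtain $\innerprod{u}{v}_{L^{2}(\prob;\cY)} \le \norm{u}_{L^{2}(\prob;\cY)} \norm{v}_{L^{2}(\prob;\cY)} = M_{\star} \norm{v}_{L^{2}(\prob;\cY)}$, yielding precisely the claimed inequality. The argument is essentially one line once the correct decomposition is in place; there is no real obstacle, but the one point that deserves care is justifying that $R(\theta_{\star})$ equals the infimum over all of $S_2(\cX, \cY)$ (rather than merely being a critical value), which is immediate from \Cref{prop:population_solution}/\Cref{prop:pseudoinverse_solution} since $\theta_{\star} = A_{\CXX}^{\dagger}[\CYX]$ solves the inverse problem \eqref{eq:inverse_problem_hs}. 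The ``$\prob^{\otimes n}$-a.s.'' qualifier simply records that $\widehat{\theta}_{\alpha}$ depends on the random sample, and that the pointwise bound holds for every realisation for which $\widehat{\theta}_{\alpha}$ is well defined.
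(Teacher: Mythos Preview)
Your proposal is correct and essentially matches the paper's proof: both use the risk decomposition \eqref{eq:risk_decomposition} to write $\mathcal{E}(\widehat{\theta}_\alpha)$ as $\norm{\E[Y|X]-\widehat{\theta}_\alpha X}^2_{L^2(\prob;\cY)} - M_\star^2$ and then insert $\theta_\star X$. The only cosmetic difference is that you use the binomial expansion $\norm{u+v}^2 = \norm{u}^2 + 2\innerprod{u}{v} + \norm{v}^2$ followed by Cauchy--Schwarz on the cross term, whereas the paper applies the triangle inequality $\norm{u+v} \leq \norm{u} + \norm{v}$ and squares; both yield the identical bound $\norm{v}^2 + 2M_\star\norm{v}$.
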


\begin{remark}[Well-specified case]
	Note that $M_\star = 0$ if and only if the problem is well specified,
    i.e.\ the LCE representation $\E[Y | X ] = \theta_{\star} X$
    holds $\prob$-a.s.\ with
    $\theta_{\star} \in S_2(\cX, \cY)$ --- or, equivalently, if the linear
    model \eqref{eq:linear_model} holds with $\theta_{\star} \in S_2(\cX, \cY)$.
\end{remark}

\begin{proof}[Proof of \Cref{lem:excess_risk}]
    All inequalities in this proof are to be understood in the $\prob^{\otimes n}$-a.s.\ sense.
	Assuming that the minimiser
    $\theta_{\star} = A^\dagger_{\CXX} [\CYX] \in S_2(\cX, \cY)$ exists.
	we can insert the risk decomposition \eqref{eq:risk_decomposition}
    into the definition of the excess risk
    $\mathcal{E}( \widehat{\theta}_\alpha )$ and obtain
	\begin{align*}
		\mathcal{E}( \widehat{\theta}_\alpha )
		& =
		R(\widehat{\theta}_\alpha) - R(\theta_{\star}) \\
		& \leq
		\bignorm{ \E [ Y | X ] - \widehat{\theta}_\alpha X }^2_{L^2(\prob; \cY)}
		-
		\bignorm{ \E [ Y | X ] - \theta_{\star} X }^2_{L^2(\prob; \cY)} \\
		&\leq
		\left(
		\bignorm{ \E [ Y | X ] - \theta_{\star} X }_{L^2(\prob; \cY)}
		+
		\bignorm{ \theta_{\star} X - \widehat{\theta}_\alpha X }_{L^2(\prob; \cY)}
		\right)^2
		-
		\norm{ \E [ Y | X ] - \theta_{\star} X }^2_{L^2(\prob; \cY)} \\
		&\leq
		\bignorm{ \theta_{\star} X - \widehat{\theta}_\alpha X }^2_{L^2(\prob; \cY)}
		+
		2
		\underbrace{
		\norm{ \E [ Y | X ] - \theta_{\star} X }_{L^2(\prob; \cY)}
		}_{ \qefed M_\star } \,
		\bignorm{ \theta_{\star} X - \widehat{\theta}_\alpha X }_{L^2(\prob; \cY)} ,
	\end{align*}
	as claimed.
\end{proof}

Combining the bound in \Cref{lem:excess_risk} with the fact that
\begin{align*}
	\norm{ \theta_{\star} X - \widehat{\theta}_\alpha X }^2_{L^2(\prob; \cY)}
	& =
	\E [ \norm{ (\theta_{\star} - \widehat{ \theta }_\alpha ) X }_\cY^2 ] \\
	& =
	\trace \left(
	(\theta_{\star} - \widehat{ \theta }_\alpha)
	\CXX
	(\theta_{\star} - \widehat{ \theta }_\alpha)^{\ast}
	\right) \\
    & =
    \bignorm{ (\theta_{\star} - \widehat{\theta}_\alpha ) \CXX^{1/2} }^2_{S_2(\cX, \cY)}
    \quad \prob^{\otimes n}\text{-a.s.},
\end{align*}
we see that the performance of
$\widehat{\theta}_\alpha$ may be assessed by bounding the quantity
\begin{equation*}
    \bignorm{ (\theta_{\star} - \widehat{\theta}_\alpha ) \CXX^{s} }_{S_2(\cX, \cY)}
    =
    \bignorm{ A^s_{\CXX}[\theta_{\star} - \widehat{\theta}_\alpha ] }_{S_2(\cX, \cY)}
    \text{ for }
    0 \leq s \leq 1/2
\end{equation*}
either with high $\prob^{\otimes n}$-probability
or in terms of moment bounds with respect to $\E^{\otimes n}$.
The case $s = 0$ corresponds to the classical
\emph{reconstruction error}
$\norm{ \theta_{\star} - \widehat{\theta}_\alpha }_{S_2(\cX, \cY)}$.
A similar approach can be found in the literature
on kernel regression with scalar response, see e.g.\ \citet{BlanchardMuecke2018}.

Alternatively, it is also possible
to investigate the performance of the estimator
in terms of the weaker operator norm
$\norm{ \theta_{\star} - \widehat{\theta}_\alpha }_{\cX \to \cY}$, since
\begin{equation*}
    \bignorm{ (\theta_{\star} - \widehat{\theta}_\alpha) \CXX^{1/2} }^2_{S_2(\cX, \cY)}
	 \leq
	\tr ( \CXX ) \,
	\bignorm{ \theta_{\star} - \widehat{\theta}_\alpha }^2_{\cX \to \cY} \quad
    \quad \prob^{\otimes n}\text{-a.s.}
\end{equation*}

\subsection{Source conditions}

We assume that
$\theta_{\star} \defeq A^\dagger_{\CXX} [ \CYX ]$ exists,
i.e., by \Cref{prop:pseudoinverse_solution}, $\theta_{\star}$ is the unique operator of minimal
norm in $S_2(\cX, \cY)$ solving
\eqref{eq:regression_problem_hs}.
In general, the convergence of
\smash{$\norm{ (\theta_{\star} - \widehat{\theta}_\alpha ) \CXX^{s} }_{S_2(\cX, \cY)}$}
and
$\norm{ \theta_{\star} - \theta_{\alpha}}_{\cX \to \cY}$
as $\alpha \to 0$ can be arbitrarily slow.
In order to alleviate this problem, one usually imposes
\emph{source conditions} (e.g., \citealp{EnglHankeNeubauer1996}, Section~3.2)
for the inverse problem \eqref{eq:inverse_problem_hs}, which we are able to express in
terms of the functional calculus derived in \cref{cor:functional_calculus}.
Equivalent conditions are commonly imposed in order to
derive convergence rates in 
functional response regression \citep{BenatiaEtAl2017, KuttaDierickxDette2021}
and kernel regression with vector-valued response variables
\citep{LiEtAl2022, LiEtAl2024, Meunier2024}.

\begin{assumption}[H\"older source condition]
	\label{ass:Hoelder_source_condition}
    For $0 < \nu < \infty$ and $0 < R < \infty$,
    we define the \emph{source set}
    \begin{equation}
    	\label{eq:Hoelder_source_set}
        \Omega(\nu, R)
        \defeq
        \Set{
            A^\nu_{\CXX}[ \theta ] }{
            \theta \in S_2(\cX, \cY),
            \norm{ \theta }_{S_2(\cX, \cY)} \leq R
        }
        \subseteq S_2(\cX, \cY).
    \end{equation}
    We assume that the solution satisfies the \emph{source condition}
    $\theta_{\star} \in \Omega(\nu, R)$.
\end{assumption}

\begin{remark}[Source condition on $\CYX$]
	\label{rem:source_condition}
	Using \Cref{cor:functional_calculus},
	we may rewrite the above source condition as
	the constrained operator factorisation problem
	\begin{equation*}
		\theta_{\star} = \tilde{\theta} \CXX^{\nu}
		\text{ for some } \tilde{\theta} \in S_2(\cX, \cY)
		\text{ with }
		\norm{ \tilde{\theta} }_{S_2(\cX, \cY)} \leq R.
	\end{equation*}
	By assumption, as $\CYX = \theta_{\star} \CXX$, this factorisation
	is clearly equivalent to
	\begin{equation}
		\label{eq:factorisation_hoelder_rhs}
		\CYX = \tilde{\theta} \CXX^{\nu + 1}
		\text{ with }
		\norm{ \tilde{\theta} }_{S_2(\cX, \cY)} \leq R.
	\end{equation}
	This shows that the source condition
	$\theta_{\star} \in \Omega(\nu, R)$ is equivalent to
	\mbox{$\CYX \in \Omega(\nu+1,R)$}.
	In the classical context of inverse problems
	and kernel regression,
	source conditions are usually interpreted as smoothness
	assumptions about the underlying problem.
    In our setting,
	the interpretation is not immediately clear.
	However, we can straightforwardly apply the approach in
	\Cref{prop:existence_hs} to this factorisation problem
	\eqref{eq:factorisation_hoelder_rhs}
	by replacing $A_{\CXX}$ with $A_{\CXX^{\nu + 1}}$ and express
	its solubility in terms of an equivalent moment condition.
\end{remark}

\begin{corollary}[Source condition]
    \label{cor:source_condition}
    The source condition $\theta_{\star} \in \Omega(\nu, R)$ is satisfied if and
    only if
    \begin{equation}
        \label{eq:hoelder_moment}
        \sum_{ i \in I }
        \sup_{x \in \cX}
        \frac{ \absval{ \E[ \innerprod{x}{X}_{\cX} \innerprod{e_i}{Y}_{\cY} ] }^2 }%
        { \norm{ \CXX^{\nu + 1} x }_{\cX}^2 } \leq R^2
    \end{equation}
    for some (indeed, any) complete orthonormal system
    $\{ e_i \}_{i \in I}$ in $\cY$.
\end{corollary}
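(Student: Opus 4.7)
The plan is to reduce the H\"older source condition to an operator factorisation to which \Cref{prop:existence_hs} applies \emph{mutatis mutandis}, with the role of $\CXX$ played by $\CXX^{\nu+1}$. Since $\CXX$ is self-adjoint, positive, and trace class and $\nu+1 \geq 1$, the power $\CXX^{\nu+1}$ (defined via the continuous functional calculus as in \Cref{cor:functional_calculus}) inherits these properties, so the entire framework of \Cref{sec:inverse_problem} transfers verbatim to the inverse problem $A_{\CXX^{\nu+1}}[\theta] = \CYX$ with solution sought in $S_2(\cX, \cY)$.

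By the equivalence indicated in \Cref{rem:source_condition}, the source condition $\theta_\star \in \Omega(\nu, R)$ is equivalent to the existence of some $\tilde\theta \in S_2(\cX, \cY)$ with $\norm{ \tilde\theta }_{S_2(\cX, \cY)} \leq R$ satisfying $\CYX = \tilde\theta \, \CXX^{\nu+1}$. Applying \Cref{prop:existence_hs} to this modified factorisation yields that its minimal Hilbert--Schmidt norm solution $((\CXX^{\nu+1})^\dagger \CXY)^\ast$ exists in $S_2(\cX, \cY)$ if and only if the right-hand side of \eqref{eq:hoelder_moment} is finite, and in that case its squared Hilbert--Schmidt norm equals exactly the sum in \eqref{eq:hoelder_moment} --- which is consequently independent of the choice of orthonormal basis $\{e_i\}_{i \in I}$. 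Since a solution $\tilde\theta$ with $\norm{ \tilde\theta }_{S_2(\cX, \cY)} \leq R$ exists if and only if the minimal-norm solution has norm at most $R$, the claimed equivalence follows.

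The principal subtlety --- and essentially the only non-routine step --- lies in the backward direction of the equivalence in \Cref{rem:source_condition}: given $\CYX = \tilde\theta \CXX^{\nu+1}$ for some $\tilde\theta \in S_2(\cX, \cY)$, one has to confirm that $\theta_\star$ itself (and not merely some other $S_2$-solution of $\theta \CXX = \CYX$) equals $\tilde\theta \CXX^\nu$. The operator $\tilde\theta \CXX^\nu \in S_2(\cX, \cY)$ is manifestly an $S_2$-solution of the original inverse problem, and it vanishes on $\ker(\CXX) = \ker(\CXX^\nu)$ (the latter equality following from the functional calculus for self-adjoint $\CXX \geq 0$ and $\nu > 0$). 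Combined with \Cref{cor:solution_set} and the uniqueness characterisation in \Cref{prop:pseudoinverse_solution}, this forces $\tilde\theta\CXX^\nu = \theta_\star$, so that $\theta_\star \in \Omega(\nu, R)$ as required.
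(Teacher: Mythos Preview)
Your proof is correct and follows essentially the same route as the paper's: reduce via \Cref{rem:source_condition} to the factorisation $\CYX = \tilde\theta\,\CXX^{\nu+1}$ and apply the argument of \Cref{prop:existence_hs} with $\CXX^{\nu+1}$ in place of $\CXX$. Your final paragraph in fact goes further than the paper, which asserts the equivalence in \Cref{rem:source_condition} without proof; your check that $\tilde\theta\,\CXX^{\nu}$ vanishes on $\ker(\CXX)=\ker(\CXX^{\nu})$ and hence coincides with $\theta_\star$ by the uniqueness criterion is a genuine (if small) gap that you have filled.
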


\begin{proof}
    Proceeding analogously to the proof of
    \Cref{prop:existence_hs}, we see that
    the factorisation \eqref{eq:factorisation_hoelder_rhs} admits a solution
    if and only if
    $\tilde{\theta} \defeq ((\CXX^{\nu + 1})^\dagger \CXY)^\ast$
    satisfies the condition $\norm{ \tilde{\theta} }_{S_2(\cX, \cY)} \leq R$,
    which is clearly equivalent to \eqref{eq:hoelder_moment}.
    Note that the left-hand side of \eqref{eq:hoelder_moment}
    is equivalent to $\norm{ \tilde{ \theta } }_{S_2(\cX, \cY) }^2$
    and hence independent of the choice of complete orthonormal system.
\end{proof}

We note that \cref{cor:source_condition} can be interpeted
as a special case of the so-called 
\emph{Picard criterion} for H\"older source
conditions \citep[Proposition~3.13]{EnglHankeNeubauer1996}.
Under the assumption of suitable source conditions, convergence rates
depending on $\alpha \to 0$ can be derived.
The details of this theory under more general assumptions
are not in the scope of this work, as there is a plethora
of results concerning \emph{general source conditions} in the literature
on inverse problems. However, our results serve as a starting point for the
investigation of source conditions in the context of infinite-dimensional
regression.

\subsection{Convergence analysis}

We demonstrate how our framework allows derivation of rates for Hilbert--Schmidt regression based on H\"older source conditions.
We begin by decomposing the error $\theta_{\star} - \widehat{\theta}_\alpha$ associated with the regularised empirical solution $\widehat{\theta}_\alpha$.
In particular, we are interested both in the Hilbert--Schmidt norm of this error and in the mean-square prediction error\footnote{Note well that the expectation here is with respect to $X$ and that both sides are random quantities, being functions of the sample data $(X_{i}, Y_{i})_{i = 1}^{n}$;
cf.\ \Cref{rmk:excess_risk_and_sample_notation}.}
\[
	\E \bigl[ \bignorm{ \bigl( \theta_{\star} - \widehat{\theta}_\alpha \bigr) X }_{\cY}^{2} \bigl]
	\equiv
	\bignorm{ \bigl( \theta_{\star} - \widehat{\theta}_\alpha \bigr) \CXX^{1/2} }_{S_{2}(\cX, \cY)}^2
\]
and in order to treat these in a unified way we will examine
\[
	\bignorm{ \bigl( \theta_{\star} - \widehat{\theta}_\alpha \bigr) \CXX^{s} }_{S_{2}(\cX, \cY)} \text{ for $0 \leq s \leq \tfrac{1}{2}$.}
\]

\paragraph{Error decomposition.}
Na\"ively, one would decompose this error norm using the triangle inequality as follows:
$\prob^{\otimes n}$-a.s.\ with respect to the samples $(X_{i}, Y_{i})_{i = 1}^{n}$,
\begin{equation}
	\label{eq:error_decomp_naive}
	\bignorm{
		\bigl( \theta_{\star} - \widehat{\theta}_\alpha \bigr) \CXX^{s}
	}_{S_2(\cX, \cY)}
	\leq
	\underbrace{%
	\bignorm{
		(\theta_{\star} - \theta_{\alpha}) \CXX^{s}
	}_{S_2(\cX, \cY)}
	}_{ = \text{approximation error}}
	+
	\underbrace{%
	\bignorm{
		\bigl( \theta_{\alpha} - \widehat{\theta}_\alpha \bigr) \CXX^{s}
	}_{S_2(\cX, \cY)}
	}_{ = \text{variance}}
	.
\end{equation}
However, this decomposition turns out to be less than ideal and we
shall consider the following alternative $\prob^{\otimes n}$-a.s.\ decomposition:
\begin{align*}
	\theta_{\star} - \widehat{\theta}_\alpha
	& = \theta_{\star} - \theta_{\star} \hatCXX g_{\alpha} \bigl( \hatCXX \bigr) + \theta_{\star} \hatCXX g_{\alpha} \bigl( \hatCXX \bigr) - \widehat{\theta}_\alpha \\
	& = \theta_{\star} r_{\alpha} \bigl( \hatCXX \bigr) + \theta_{\star} \hatCXX g_{\alpha} \bigl( \hatCXX \bigr) - \hatCYX g_{\alpha} \bigl( \hatCXX \bigr) \\
	& = \theta_{\star} r_{\alpha} \bigl( \hatCXX \bigr) + \bigl( \theta_{\star} \hatCXX - \hatCYX \bigr) g_{\alpha} \bigl( \hatCXX \bigr) ,
\end{align*}
where $r_{\alpha}$ is as in \eqref{eq:residual_of_regularisation}, and so
\begin{align}
	\notag
	\bignorm{
        \bigl( \theta_{\star} -
        \widehat{\theta}_\alpha \bigr) \CXX^{s}
        }_{S_2(\cX, \cY)}
	& \leq
	\bignorm{ \theta_{\star} r_{\alpha} \bigl( \hatCXX \bigr) \CXX^{s} }_{S_{2}(\cX, \cY)} \\
	\label{eq:error_decomp_smart}
	& \phantom{=} \quad +
	\bignorm{ \bigl( \theta_{\star} \hatCXX - \hatCYX \bigr) g_{\alpha} \bigl( \hatCXX \bigr) \CXX^{s} }_{S_{2}(\cX, \cY)} \quad \text{$\prob^{\otimes n}$-a.s.}
\end{align}
Again, we think of the two terms on the right-hand side of \eqref{eq:error_decomp_smart}
as an \emph{approximation error} and a \emph{variance term}.
Crucially, though, the approximation error in the decomposition
\eqref{eq:error_decomp_smart} is random --- as opposed to the deterministic
approximation term in \eqref{eq:error_decomp_naive} --- and both terms
in \eqref{eq:error_decomp_smart} will be amenable to analysis
using concentration-of-measure techniques.
Our approach combines error decomposition techniques
from kernel-based learning theory by \citet{BlanchardMuecke2018}
with concentration results for sub-exponential random operators
which derive based on recent results by \citet{MaurerPontil2021}.
Before bounding both these error terms, we
introduce sub-Gaussian and sub-exponential norms for Hilbertian
random variables.

\paragraph{Concentration bound for empirical covariance operators.}

We begin by defining notions of \emph{sub-exponentiality} and
\emph{sub-Gaussianity} of real-valued random variables
\citep[e.g.][]{BuldyginKozachenko2000}, which we generalise
to vector-valued random variables.
For a real-valued random variable $\xi$ defined
on $(\Omega, \sigalg, \prob)$,
we introduce the Banach spaces
$L_{\psi_1}(\Omega, \sigalg, \prob; \Reals) = L_{\psi_1}(\prob)$
and
$L_{\psi_2}(\Omega, \sigalg, \prob; \Reals) =   L_{\psi_2}(\prob)$
via the norms
\begin{equation*}
    \norm{ \xi }_{L_{\psi_1}(\prob)} \defeq
    \sup_{ 1 \leq p < \infty }
    \frac{ \norm{ \xi }_{L^p(\prob)} }{ p }
    \text{ and }
    \norm{ \xi }_{L_{\psi_2}(\prob)} \defeq
    \sup_{ 1 \leq p < \infty }
    \frac{ \norm{ \xi }_{L^p(\prob)} }{ p^{1/2} } ;
\end{equation*}
see \citet{MaurerPontil2021}. We extend this definition
to the case that $\xi$ takes values in a separable Hilbert space $\cH$
by defining
\begin{equation*}
    \norm{ \xi }_{L_{\psi_1}(\prob; \cH)} \defeq
    \norm{ \, \norm{ \xi }_\cH \, }_{L_{\psi_1}(\prob)} =
    \sup_{ 1 \leq p < \infty }
    \frac{ \norm{ \xi }_{L^p(\prob;\cH)} }{ p }
\end{equation*}
and analogously for
$\norm{ \xi }_{L_{\psi_2}(\prob; \cH)} \defeq
\norm{ \norm{ \xi }_\cH }_{L_{\psi_2}(\prob)} $.
For the real-valued case, these norms are equivalent
to the usual sub-exponential and sub-Gaussian norms
(see e.g.\ \citealp{Vershynin2018}, Propositions~2.5.2 and 2.7.1).
In the vector-valued case, the sub-Gaussian and sub-exponential norms
of $\xi$
are sometimes defined as the supremum of the real sub-exponential and
sub-Gaussian norms over all
one-dimensional projections $\innerprod{x}{ \xi }_\cH$ for $x \in \cH$ ---
the norms of $L_{\psi_1}(\prob; \cH)$ and $L_{\psi_2}(\prob; \cH)$
discussed here are stronger.
Note that $L_{\psi_2}(\prob; \cH) \subseteq L_{\psi_1}(\prob; \cH)$.

\begin{remark}[Bernstein condition]
Let $\xi$ take values in the separable Hilbert space $\cH$.
In statistical learning theory, the vector-valued \emph{Bernstein condition}
(\citealp{PinelisSakhanenko1985}) given by
\begin{equation}
    \label{eq:bernstein_condition}
    \E \big[ \norm{ \xi - \E[\xi] }_\cH^p \big] \leq \frac{1}{2} p! \sigma^2 L^{p-2}
    \text{ for all } p \geq 2
\end{equation}
with parameters $\sigma < \infty$ and $L < \infty$ is a standard assumption
in order to derive tail bounds for Hilbertian random variables.
Condition \eqref{eq:bernstein_condition} can be interpreted
as the classical real-valued Bernstein condition
applied to the random variable $\norm{\xi - \E[\xi]}_H$
\citep[see e.g.][Section~1.4]{BuldyginKozachenko2000}.
However, it is possible to prove that the
real-valued Bernstein condition is equivalent
to sub-exponentiality (see \Cref{app:subexponentiality}).
Hence, the vector-valued Bernstein condition \eqref{eq:bernstein_condition}
is equivalent to $\norm{ \xi }_{L_{\psi_1}(\prob; \cH)} < \infty$.
\end{remark}

Analogously to the fact that products of real-valued
sub-Gaussians are sub-exponential, we show that
such a property holds for tensor products of sub-Gaussians
in the vector-valued case.

\begin{lemma}[Tensor products of sub-Gaussians are sub-exponential]
    \label{lem:tensor_subexponential}
    We have
    \begin{equation*}
        \norm{ Y \otimes X }_{L_{\psi_1}(\prob; S_2(\cX, \cY))}
        \leq
        2
        \norm{ X }_{L_{\psi_2}(\prob;\cX)}
        \norm{ Y }_{L_{\psi_2}(\prob;\cY)}.
    \end{equation*}
\end{lemma}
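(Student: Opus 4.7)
The plan is to unpack both norms to reduce the inequality to a scalar computation, then use Cauchy--Schwarz to separate the $X$ and $Y$ factors.

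First I would use the identity $\norm{ y \otimes x }_{S_{2}(\cX, \cY)} = \norm{x}_{\cX} \norm{y}_{\cY}$ from \eqref{eq:norm_of_outer_product} (with $p=2$) to rewrite, for any $p \geq 1$,
\[
    \norm{ Y \otimes X }_{L^{p}(\prob; S_{2}(\cX, \cY))}^{p}
    = \E\bigl[ \norm{X}_{\cX}^{p} \norm{Y}_{\cY}^{p} \bigr].
\]
Applying the Cauchy--Schwarz inequality to the right-hand side and then taking $p$\textsuperscript{th} roots yields
\[
    \norm{ Y \otimes X }_{L^{p}(\prob; S_{2}(\cX, \cY))}
    \leq
    \norm{X}_{L^{2p}(\prob;\cX)} \, \norm{Y}_{L^{2p}(\prob;\cY)} .
\]

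Next, I would invoke the definition of the sub-Gaussian norm directly: since $\norm{X}_{L_{\psi_{2}}(\prob;\cX)} = \sup_{q \geq 1} \norm{X}_{L^{q}(\prob;\cX)} / q^{1/2}$, we obtain $\norm{X}_{L^{2p}(\prob;\cX)} \leq (2p)^{1/2} \norm{X}_{L_{\psi_{2}}(\prob;\cX)}$, and similarly for $Y$. Multiplying these two bounds gives a factor of $2p$, so
\[
    \frac{ \norm{ Y \otimes X }_{L^{p}(\prob; S_{2}(\cX, \cY))} }{ p }
    \leq
    2 \, \norm{X}_{L_{\psi_{2}}(\prob;\cX)} \, \norm{Y}_{L_{\psi_{2}}(\prob;\cY)}.
\]
Taking the supremum over $p \geq 1$ on the left-hand side produces exactly $\norm{ Y \otimes X }_{L_{\psi_{1}}(\prob; S_{2}(\cX, \cY))}$ by definition, completing the proof.

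There is no real obstacle here: the argument is a two-line computation once one recognises that the $S_{2}$-norm of a rank-one tensor factorises. The only subtlety is the bookkeeping of the exponent $2p$ appearing on the right-hand side after Cauchy--Schwarz, which is precisely what compensates the $p^{1/2}$ normalisation in the definition of the sub-Gaussian norm and yields the universal constant $2$ (rather than something depending on $p$) in the sub-exponential norm bound on the left.
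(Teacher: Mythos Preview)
Your proof is correct and follows essentially the same approach as the paper: factorise the $S_{2}$-norm of the rank-one tensor via \eqref{eq:norm_of_outer_product}, apply Cauchy--Schwarz to pass from $L^{p}$ to $L^{2p}$ norms, and then use the definition of the sub-Gaussian norm to absorb the resulting factor of $2p$. The paper carries out the same computation after squaring both sides, but the content is identical.
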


\begin{proof}
    We have
    \begin{align*}
        \norm{ Y \otimes X }_{L_{\psi_1}(\prob; S_2(\cX, \cY))}^2
        &=
        \sup_{ 1 \leq p < \infty }
        \frac{
            \E[ \norm{ X }^p_{ \cX } \,
            \norm{ Y }^p_{ \cY }
            \big]^{2/p}
         }{ p^2 } && \text{(by \eqref{eq:norm_of_outer_product})} \\
        &\leq
        \sup_{ 1 \leq p < \infty }
        \frac{
            \E\big[ \norm{ X }^{2p}_{ \cX } \big]^{1/p} \,
            \E \big[\norm{ Y }^{2p}_{ \cY } \big]^{1/p}
         }{ p^2 } && \text{(Cauchy--Schwarz)} \\
        &=
        4
        \sup_{ 1 \leq p < \infty }
        \frac{ \norm{ X }^2_{L^{2p}(\prob; \cX)} \,
               \norm{ Y }^2_{L^{2p}(\prob; \cY)}
         }{  4 p^2 } \\
         &\leq
         4
         \norm{ X }^2_{L_{\psi_2}(\prob; \cX)} \,
         \norm{ Y }^2_{L_{\psi_2}(\prob; \cY)} .
        \end{align*}
        Taking square roots completes the proof.
\end{proof}

We now summarise the concentration-of-measure results that will be
used to control both the approximation error and the variance.
We call particular attention to the fact that, although these are
stated as five inequalities, some are deterministic consequences of
others (e.g.\ \eqref{eq:simul_conc_hatCXX} follows
from \eqref{eq:simul_conc_covariance} pointwise for all events
in a subset of the underlying sample space),
and this allows us to minimise the number
of overall appeals to concentration of measure and the union bound.

\begin{theorem}[Simultaneous concentration bounds]
	\label{thm:simul_conc}
	Suppose that $X \in L_{\psi_{2}}(\prob; \cX)$, $Y \in L_{\psi_{2}}(\prob; \cY)$,
    and that $(X_1, Y_1), \dots, (X_n, Y_n)$ are sampled i.i.d.\ from $\law(X, Y)$,
    i.e.\ with joint distribution $\prob^{\otimes n}$.
    Let $\alpha \in (0, 1)$, $\delta \in (0, \frac{1}{2}]$, $n \geq \log ( 1 / \delta )$,
    and $r \in [0, 1]$.
	Also let $T \in L(\cX, \cY)$.
	Then, with $\prob^{\otimes n}$-probability at least $1 - 2 \delta$,
    all the following bounds hold simultaneously (i.e.\ for the same subset
    of the underlying sample space):
	\begin{align}
		\label{eq:simul_conc_covariance}
		\bignorm{ \hatCXX - \CXX }_{S_{2}(\cX, \cY)}
		& \leq
        24 \sqrt{2} e \norm{ X }_{L_{\psi_{2}}(\prob; \cX)}^{2} \sqrt{\frac{\log(1/\delta)}{n}} , \\
		\label{eq:simul_conc_hatCXX}
		\bignorm{ \hatCXX }_{S_2(\cX)}
		& \leq \norm{ \CXX }_{S_2(\cX)} +
        24 \sqrt{2} e \norm{ X }_{L_{\psi_2}(\prob; \cX)}^2 \sqrt{\frac{ \log(1 / \delta)}{n}} , \\
		\label{eq:simul_conc_empirical_idop}
		2 & \geq \bignorm{ \bigl( \hatCXX + \alpha \idop_\cX \bigr)^{-1} ( \CXX + \alpha \idop_\cX ) }_{L(\cX)} , \\
		\label{eq:simul_conc_T_CXXs}
		\norm{ T \CXX^{r} }_{L(\cX, \cY)}
		& \leq 2^r \cdot
		\bignorm{ T \bigl( \hatCXX + \alpha \idop_\cX \bigr)^r }_{L(\cX, \cY)} ,
	\end{align}
	and,
	\begin{align}
		\label{eq:simul_conc_odd_residual}
		\bignorm{ \bigl( \theta_\star \hatCXX - \hatCYX \bigr)
		(\CXX + \alpha \idop_\cX)^{-1/2} }_{S_2(\cX, \cY)}
		& \leq \frac{16 \sqrt{2} e B_{\psi_2}}{\sqrt{\alpha}} \cdot \sqrt{\frac{\log(1/\delta)}{n}} ,
	\end{align}
	where
	\begin{equation}
		\label{eq:Bpsi2}
		B_{\psi_2} \defeq \norm{\theta_\star }_{L(\cX, \cY)} \norm{X}^2_{L_{\psi_2}(\prob; \cX)} + \norm{X}_{L_{\psi_2}(\prob; \cX)} \norm{Y}_{L_{\psi_2}(\prob; \cY)} ,
	\end{equation}
	and inequalities \eqref{eq:simul_conc_empirical_idop} and \eqref{eq:simul_conc_T_CXXs} require the additional assumption that
    \begin{equation}
		\label{eq:alpha_conc}
		n \geq \max
		\left\{ 1 , \frac{1152 \cdot e^2
			\norm{X}^4_{L_{\psi_2}(\prob;\cX)}}{\alpha^2}
		\right\}
		\cdot \log(1/\delta).
    \end{equation}
\end{theorem}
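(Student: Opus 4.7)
The plan is to reduce the five inequalities to two genuinely stochastic Bernstein-type concentration statements for i.i.d.\ sample means of sub-exponential Hilbert-space-valued random variables (in the form given by \citet{MaurerPontil2021}), each failing with probability at most $\delta$, together with three deterministic consequences that hold pointwise on the event where the relevant concentration bound is in force. A single union bound of mass $2 \delta$ then covers the claim.

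First I would establish \eqref{eq:simul_conc_covariance} by applying the Maurer--Pontil Bernstein inequality to the i.i.d.\ mean-zero summands $X_i \otimes X_i - \CXX \in S_2(\cX)$. The sub-exponential norm of $X \otimes X$ is controlled by \Cref{lem:tensor_subexponential} (with the second factor taken equal to $X$), giving a bound proportional to $\norm{X}_{L_{\psi_2}(\prob; \cX)}^{2}$; the hypothesis $n \geq \log(1/\delta)$ is then used to absorb a quadratic deviation term into the linear $\sqrt{\log(1/\delta)/n}$ term, producing the explicit constant $24 \sqrt{2} e$. Inequality \eqref{eq:simul_conc_hatCXX} is immediate from $\norm{\hatCXX}_{S_2} \leq \norm{\CXX}_{S_2} + \norm{\hatCXX - \CXX}_{S_2}$ on the same event.

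Still on the event where \eqref{eq:simul_conc_covariance} holds, \eqref{eq:simul_conc_empirical_idop} follows from the identity
\[
    (\hatCXX + \alpha \idop_\cX)^{-1} (\CXX + \alpha \idop_\cX) = \idop_\cX + (\hatCXX + \alpha \idop_\cX)^{-1} (\CXX - \hatCXX),
\]
the bound $\norm{(\hatCXX + \alpha \idop_\cX)^{-1}}_{L(\cX)} \leq \alpha^{-1}$, and the observation that the sample-size condition \eqref{eq:alpha_conc} is calibrated so that $\alpha^{-1} \norm{\CXX - \hatCXX}_{L(\cX)} \leq 1$ (invoking $\norm{\,\cdot\,}_{L(\cX)} \leq \norm{\,\cdot\,}_{S_2(\cX)}$). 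Inequality \eqref{eq:simul_conc_T_CXXs} is then purely functional-analytic: the factorisation
\[
    T \CXX^r = \bigl[ T (\hatCXX + \alpha \idop_\cX)^r \bigr] \bigl[ (\hatCXX + \alpha \idop_\cX)^{-r} (\CXX + \alpha \idop_\cX)^r \bigr] \bigl[ (\CXX + \alpha \idop_\cX)^{-r} \CXX^r \bigr],
\]
combined with two applications of the Cordes--Heinz--Löwner inequality $\norm{A^r B^r} \leq \norm{AB}^r$ (valid for positive $A, B$ and $r \in [0,1]$), bounds the middle bracket by $2^r$ via \eqref{eq:simul_conc_empirical_idop} and the third by $1$, since $(\CXX + \alpha \idop_\cX)^{-1} \CXX$ is self-adjoint with spectrum in $[0, 1)$.

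Finally, \eqref{eq:simul_conc_odd_residual} is a second Bernstein application. Since $\theta_\star \CXX = \CYX$, the summands $Z_i \defeq (\theta_\star X_i - Y_i) \otimes (\CXX + \alpha \idop_\cX)^{-1/2} X_i$ are mean-zero in $S_2(\cX, \cY)$. \Cref{lem:tensor_subexponential}, the triangle inequality for $\norm{\,\cdot\,}_{L_{\psi_2}}$, the bound $\norm{\theta_\star X}_{L_{\psi_2}} \leq \norm{\theta_\star}_{L(\cX,\cY)} \norm{X}_{L_{\psi_2}(\prob;\cX)}$, and the deterministic estimate $\norm{(\CXX + \alpha \idop_\cX)^{-1/2}}_{L(\cX)} \leq \alpha^{-1/2}$, yield a sub-exponential norm of order $\alpha^{-1/2} B_{\psi_2}$, whence Bernstein delivers the claim with constant $16 \sqrt{2} e$. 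The main obstacle I anticipate is careful bookkeeping of the explicit constants through both Bernstein applications (matching the form of the Maurer--Pontil bound), together with the observation that the two concentration events, although not independent, are coupled through the shared sample $(X_i, Y_i)_{i=1}^{n}$ and so can be handled by a single union bound of total mass $2\delta$.
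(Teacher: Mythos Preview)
Your proposal is correct and matches the paper's approach almost exactly: two applications of the Maurer--Pontil sub-exponential Hoeffding bound (\Cref{prop:maurer_pontil}) yield \eqref{eq:simul_conc_covariance} and \eqref{eq:simul_conc_odd_residual}, while \eqref{eq:simul_conc_hatCXX}, \eqref{eq:simul_conc_empirical_idop}, and \eqref{eq:simul_conc_T_CXXs} are deterministic consequences (triangle inequality, the resolvent identity you wrote, and the Cordes factorisation respectively) on the event where \eqref{eq:simul_conc_covariance} holds, so a single union bound of mass $2\delta$ suffices. One small bookkeeping correction: the constant $24\sqrt{2}e = 3 \cdot 8\sqrt{2}e$ in \eqref{eq:simul_conc_covariance} does not come from absorbing a quadratic Bernstein term (Proposition~7(ii) of \citet{MaurerPontil2021} already delivers the clean $8\sqrt{2}e$ bound under $n \geq \log(1/\delta)$), but from the centering step $\norm{X \otimes X - \E[X \otimes X]}_{L_{\psi_1}} \leq \norm{X \otimes X}_{L_{\psi_1}} + \norm{\E[X \otimes X]}_{\cX \otimes \cX} \leq (2+1)\norm{X}_{L_{\psi_2}}^2$ via \Cref{lem:tensor_subexponential} and $\norm{\quark}_{L^1} \leq \norm{\quark}_{L_{\psi_2}}$.
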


\begin{proof}
	Inequalities \eqref{eq:simul_conc_covariance} to \eqref{eq:simul_conc_odd_residual}
    are proven separately in the appendix as
    \hyperref[lem:concentration_empirical_covariance]{Lemmas} \ref{lem:concentration_empirical_covariance} to \ref{lem:concentration_of_odd_residual} respectively.
	Bearing in mind the dependency structure of these results, as illustrated
    in \Cref{fig:concentration_dependency_graph}, we see that there are only
    two statements that each hold with probability at least $1 - \delta$,
    namely \eqref{eq:simul_conc_covariance} and \eqref{eq:simul_conc_odd_residual},
    and the remainder are pointwise corollaries of those two.
	Thus, by the union bound, the complete set of inequalities
    holds simultaneously on a set of $\prob^{\otimes n}$-probability
    at least $1 - 2 \delta$.
\end{proof}

We remark that the condition \eqref{eq:alpha_conc}
is deliberately chosen such that it implies the bound
\begin{equation}
\label{eq:alpha_consequence}
24 \sqrt{2} e \norm{ X }_{L_{\psi_2}(\prob; \cX)}^2 \sqrt{\frac{ \log(1 / \delta)}{n}}
\leq \alpha.
\end{equation}
We make use of this implication multiple times.

\begin{figure}[t]
	\centering
	\newcommand{\twoliner}[3]{\fcolorbox{black}{#1}{$\begin{array}{c} \text{#2} \\ \text{#3} \end{array}$}}
    \begin{tikzcd}[scale cd=.85]%
		\twoliner{white}{\Cref{prop:maurer_pontil} $=$}{\citet[Prop.~7(ii)]{MaurerPontil2021}} \ar[Rightarrow, d] \ar[Rightarrow, dr] & ~ \\
		\twoliner{white}{\eqref{eq:simul_conc_covariance} $=$ \Cref{lem:concentration_empirical_covariance} on}{$\bignorm{ \hatCXX - \CXX }_{S_{2}(\cX)}$} \ar[Rightarrow, dr] \ar[Rightarrow, d] & \hspace{-1cm}\twoliner{white}{\eqref{eq:simul_conc_odd_residual} $=$ \Cref{lem:concentration_of_odd_residual} on}{$\bignorm{ \bigl( \theta_{\star} \hatCXX - \hatCYX \bigr) ( \CXX + \alpha \idop_{\cX} )^{-1/2} }_{S_{2}(\cX, \cY)}$} \\
		\twoliner{black!20}{\eqref{eq:simul_conc_empirical_idop} $=$ \Cref{lem:concentration_empirical_idop} on}{$\bignorm{ \bigl( \hatCXX + \alpha \idop_{\cX} \bigr)^{-1} ( \CXX + \alpha \idop_{\cX} ) }_{L(\cX)}$} \ar[Rightarrow, d] & \twoliner{white}{\eqref{eq:simul_conc_hatCXX} $=$ \Cref{lem:concentration_norm}}{on $\bignorm{ \hatCXX }_{S_{2}(\cX)}$} \\
		\twoliner{black!20}{\eqref{eq:simul_conc_T_CXXs} $=$ \Cref{lem:weighted_norms}}{on $\norm{ T \CXX^{r} }_{L(\cX, \cY)}$} & ~ \\
	\end{tikzcd}
	\caption{The dependency structure of the concentration results used in this paper.
	The implications following on from \Cref{lem:concentration_empirical_covariance} hold pointwise in the sample space, and so two appeals to \Cref{prop:maurer_pontil}, each valid with $\prob^{\otimes n}$-probability at least $1 - \delta$, for $n \geq \log(1/\delta)$, suffice to establish \emph{all} the necessary bounds simultaneously with $\prob^{\otimes n}$-probability at least $1 - 2 \delta$ (by the union bound).
	The two shaded bounds require an additional lower bound on $n$, given in
    \eqref{eq:alpha_conc}.}
	\label{fig:concentration_dependency_graph}
\end{figure}

\paragraph{Bounding the approximation error.}
We now bound the approximation error $\theta_{\star} r_{\alpha} \bigl( \hatCXX \bigr)$ in the $\CXX^{s}$-weighted Hilbert--Schmidt norm using the properties of $g_\alpha$ defined in \Cref{def:spectral_regularisation_strategy}
and the qualification introduced in \eqref{eq:qualification};
we introduce the shorthand $\bar{\gamma} \defeq \max\{\gamma_0 , \gamma_q\}$.
In what follows, $\kappa_{a,b,c}$ denotes a positive, finite constant depending on some generic quantities $a,b,c$.

\begin{proposition}[High-probability bound on approximation error in \eqref{eq:error_decomp_smart}]
	\label{prop:approx-error}
	In addition to the assumptions of \Cref{thm:simul_conc},
	suppose the regularisation strategy $g_\alpha$ has
	qualification $q \geq \nu + s$.
	Suppose that
	$\theta_\star \in \Omega(\nu , R)$ and
	$s \in [0, \tfrac{1}{2}]$ and assume
	that $n$ satisfies \eqref{eq:alpha_conc}.
	There exists a constant
	$\kappa_{\nu, \CXX} < \infty$,
	exclusively depending on $\nu$ and $\norm{\CXX}_{L(\cX)}$, such that
	with $\prob^{\otimes n}$-probability at least $1 - \delta$,
	\begin{equation*}
	\norm{ \theta_\star r_\alpha \bigl( \hatCXX \bigr)  \CXX^{s} }_{S_2(\cX, \cY)}
	\leq \kappa_{\nu, \CXX} \bar{\gamma} R\alpha^s
	\left(
		\alpha^\nu +
		\norm{ X }_{L_{\psi_{2}}(\prob; \cX)}^{2} \sqrt{\frac{\log(1/\delta)}{n}}
	\right).
	\end{equation*}
\end{proposition}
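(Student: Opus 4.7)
The plan is to use the source condition to reduce the approximation error to a single operator-norm estimate, and then to bound that operator by a ``bias--variance''-style split combining the functional calculus of \Cref{cor:functional_calculus} with the concentration bounds of \Cref{thm:simul_conc}. First, the source condition $\theta_\star \in \Omega(\nu, R)$, together with the identity $A_{\CXX}^{\nu} = A_{\CXX^{\nu}}$ from \Cref{cor:functional_calculus}, gives $\theta_\star = \tilde\theta \, \CXX^\nu$ for some $\tilde\theta \in S_2(\cX, \cY)$ with $\norm{\tilde\theta}_{S_2(\cX,\cY)} \leq R$. Applying the Hilbert--Schmidt ideal inequality $\norm{A M}_{S_2} \leq \norm{A}_{S_2}\norm{M}_{L}$ yields
\[
\norm{\theta_\star \, r_\alpha(\hatCXX)\, \CXX^s}_{S_2(\cX,\cY)} \leq R \cdot \norm{\CXX^\nu \, r_\alpha(\hatCXX)\, \CXX^s}_{L(\cX)},
\]
and a further application of \eqref{eq:simul_conc_T_CXXs} (with $T = \CXX^\nu r_\alpha(\hatCXX)$ and $r = s$) replaces the outer $\CXX^s$ by $(\hatCXX + \alpha\idop_\cX)^s$ at the cost of a factor $2^s$. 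It therefore suffices to bound $\norm{\CXX^\nu \, r_\alpha(\hatCXX)\, (\hatCXX+\alpha\idop_\cX)^s}_{L(\cX)}$ by $\kappa_{\nu,\CXX}\,\bar\gamma\,(\alpha^{\nu+s} + \alpha^s \norm{X}_{L_{\psi_2}(\prob;\cX)}^2 \sqrt{\log(1/\delta)/n})$.

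To that end, decompose $\CXX^\nu = \hatCXX^\nu + (\CXX^\nu - \hatCXX^\nu)$. The first, ``deterministic'' piece is a function of $\hatCXX$ alone, so by the continuous functional calculus its operator norm equals $\sup_{\lambda \geq 0} \lambda^\nu |r_\alpha(\lambda)|\,(\lambda+\alpha)^s$; a case split into $\lambda \leq \alpha$ (where $\lambda^\nu \leq \alpha^\nu$, $(\lambda+\alpha)^s \leq (2\alpha)^s$, and property \ref{item:r2} are used) and $\lambda > \alpha$ (where $(\lambda+\alpha)^s \leq (2\lambda)^s$ and the qualification \eqref{eq:qualification} with $q \geq \nu+s$ give $\lambda^{\nu+s}|r_\alpha(\lambda)| \leq \gamma_{\nu+s} \alpha^{\nu+s}$) produces the bound $2^s\,\bar\gamma\,\alpha^{\nu+s}$, which is the $\alpha^\nu$ contribution of the claim. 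For the second, ``random'' piece, submultiplicativity gives
\[
\norm{(\CXX^\nu - \hatCXX^\nu)\, r_\alpha(\hatCXX)\,(\hatCXX+\alpha\idop_\cX)^s}_{L(\cX)} \leq \norm{\CXX^\nu - \hatCXX^\nu}_{L(\cX)} \cdot \norm{r_\alpha(\hatCXX)\,(\hatCXX+\alpha\idop_\cX)^s}_{L(\cX)},
\]
where the second factor is controlled by the functional calculus together with \eqref{eq:simul_conc_hatCXX} and \eqref{eq:alpha_consequence} (so that $\norm{\hatCXX}_{L(\cX)}$ stays bounded by a constant depending only on $\norm{\CXX}_{L(\cX)}$) and the first factor is controlled by an operator-H\"older / operator-Lipschitz inequality of the form $\norm{\CXX^\nu - \hatCXX^\nu}_{L(\cX)} \leq C_\nu\bigl(\norm{\CXX}_{L(\cX)},\norm{\hatCXX}_{L(\cX)}\bigr)\,\norm{\CXX-\hatCXX}_{L(\cX)}^{\min(\nu,1)}$, into which the concentration bound \eqref{eq:simul_conc_covariance} is substituted. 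For $\nu \in (0,1)$, the H\"older exponent initially yields $(\sqrt{\log(1/\delta)/n})^\nu$, which is absorbed into a constant times $\sqrt{\log(1/\delta)/n}$ using $\sqrt{\log(1/\delta)/n} \leq \alpha \leq 1$ from \eqref{eq:alpha_consequence}.

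The main obstacle is precisely the operator-H\"older bound $\norm{\CXX^\nu - \hatCXX^\nu}_{L(\cX)} \leq C_\nu(\cdot)\,\norm{\CXX - \hatCXX}_{L(\cX)}^{\min(\nu,1)}$ for general $\nu > 0$: for $\nu \in (0, 1]$ this is a classical Powers--St\o{}rmer-type inequality, but for $\nu > 1$ it requires a telescoping or integral-representation argument that draws on the a priori operator-norm bound for $\hatCXX$ supplied by \eqref{eq:simul_conc_hatCXX}. Tracking how the various $\nu$- and $\norm{\CXX}_{L(\cX)}$-dependent constants aggregate into a single $\kappa_{\nu,\CXX}$, without spoiling the linear-in-$\sqrt{\log(1/\delta)/n}$ scaling of the variance term, is the remaining bookkeeping.
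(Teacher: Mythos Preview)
Your overall architecture matches the paper's proof: reduce via the source condition and \eqref{eq:simul_conc_T_CXXs} to bounding $\norm{\CXX^\nu r_\alpha(\hatCXX)(\hatCXX+\alpha\idop_\cX)^s}_{L(\cX)}$, and for $\nu \geq 1$ split $\CXX^\nu = \hatCXX^\nu + (\CXX^\nu - \hatCXX^\nu)$, bound the first piece by the qualification (the paper's \Cref{lem:BM2018_511}), and bound the second via the Lipschitz inequality $\norm{\CXX^\nu - \hatCXX^\nu}_{L(\cX)} \leq \kappa'_{\nu,\CXX}\norm{\CXX - \hatCXX}_{L(\cX)}$ (the paper's \Cref{prop:perturbation_sum}) together with \eqref{eq:simul_conc_covariance}. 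So for $\nu \geq 1$ your proposal is essentially the paper's argument.

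For $\nu \in (0,1)$, however, there is a small but real slip. Your absorption step is stated in the wrong direction: with $x \defeq \sqrt{\log(1/\delta)/n} \leq 1$ and $\nu < 1$ one has $x^\nu \geq x$, so $x^\nu$ cannot be bounded by a constant times $x$. What \eqref{eq:alpha_consequence} does give is $24\sqrt{2}e\norm{X}_{L_{\psi_2}(\prob;\cX)}^{2} x \leq \alpha$, and hence, after Powers--St\o{}rmer and \eqref{eq:simul_conc_covariance},
\[
\norm{\CXX^\nu - \hatCXX^\nu}_{L(\cX)} \leq \norm{\CXX - \hatCXX}_{L(\cX)}^{\nu} \leq \bigl(24\sqrt{2}e\norm{X}_{L_{\psi_2}(\prob;\cX)}^{2} x\bigr)^{\nu} \leq \alpha^\nu,
\]
so the ``random'' piece for $\nu<1$ is absorbed into the $\alpha^{\nu}$ term of the final bound, not the $\sqrt{\log(1/\delta)/n}$ term. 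With this correction your route does close.

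The paper sidesteps Powers--St\o{}rmer entirely for $\nu\in(0,1)$ by applying \eqref{eq:simul_conc_T_CXXs} a second time (now with exponent $r=\nu\in[0,1]$ and $T = \bigl(r_\alpha(\hatCXX)(\hatCXX+\alpha\idop_\cX)^s\bigr)^\ast$) to replace $\CXX^\nu$ directly by $(\hatCXX+\alpha\idop_\cX)^\nu$; all factors then commute and \Cref{lem:BM2018_511} gives $4\bar\gamma\alpha^{\nu+s}$ in one stroke. This is cleaner and explains why the case split at $\nu=1$ is natural: \eqref{eq:simul_conc_T_CXXs} is only available for $r\in[0,1]$, while \Cref{prop:perturbation_sum} is only available for $\nu\geq 1$. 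Your unified decomposition is legitimate but trades that clean split for an extra operator-H\"older ingredient and the absorption fix above.
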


\begin{proof}
	In view of \Cref{thm:simul_conc}, we may restrict attention
	to an event of $\prob^{\otimes n}$-probability
	at least $1 - \delta$ on which \eqref{eq:simul_conc_covariance} and its pointwise consequences
	\eqref{eq:simul_conc_hatCXX} and \eqref{eq:simul_conc_T_CXXs} all hold.
	In what follows, all statements are to be understood pointwise
	for outcomes in this event.

	Applying the assumption $\theta_\star \in \Omega(\nu, R)$
	and \eqref{eq:simul_conc_T_CXXs}, we have
	\begin{equation}
	\label{eq:approx-eq-1}
	\bignorm{
		\theta_\star r_\alpha \bigl( \hatCXX \bigr)
		\CXX^{s} }_{S_2(\cX, \cY)
	}
	\leq 2^s  R \cdot
		\bignorm{ \CXX^{\nu}
		r_\alpha \bigl( \hatCXX \bigr) \bigl( \hatCXX + \alpha \idop_{\cX} \bigr)^s }_{L(\cX)}
	\end{equation}
	We now bound the right-hand side of \eqref{eq:approx-eq-1}
	by treating the two cases $\nu \in (0,1)$ and $\nu \geq 1$ separately.

	\noindent\textbf{\textsf{Case I: $\nu \in (0,1)$.}}
	In this case we
	apply \eqref{eq:simul_conc_T_CXXs} and
	\Cref{lem:BM2018_511} and obtain
	\begin{align*}
	\bignorm{  \CXX^{\nu} r_\alpha \bigl( \hatCXX \bigr)
		\bigl( \hatCXX + \alpha \idop_{\cX} \bigr)^s }_{L(\cX)}
	&\leq 2^\nu
	\bignorm{ r_\alpha \bigl( \hatCXX \bigr)
		\bigl( \hatCXX + \alpha \idop_{\cX} \bigr)^{\nu +s}}_{L(\cX)} \\
	 &\leq 4 \bar{\gamma} \alpha^{\nu + s} \;,
	\end{align*}

	\noindent\textbf{\textsf{Case II: $\nu \geq 1$.}}
	In this case we follow \citet[Eq.~(5.10)]{BlanchardMuecke2018}:
	\begin{align*}
		\bignorm{ \CXX^{\nu} r_\alpha \bigl( \hatCXX \bigr)
		\bigl( \hatCXX + \alpha \idop_{\cX} \bigr)^s }_{L(\cX)}
		&\leq
		\bignorm{ \hatCXX^{\nu}
		r_\alpha \bigl( \hatCXX \bigr) \bigl( \hatCXX + \alpha \idop_{\cX} \bigr)^s }_{L(\cX)} \\
		& \phantom{=} \quad + \bignorm{ \bigl( \CXX^{\nu} - \hatCXX^{\nu} \bigr)
		r_\alpha \bigl( \hatCXX \bigr) \bigl( \hatCXX + \alpha \idop_{\cX} \bigr)^s }_{L(\cX)}
	\end{align*}
	and the terms on the right-hand side can be bounded individually.
	For the first term we use the qualification of $g_\alpha$
	and apply \Cref{lem:BM2018_511}:
	\begin{align*}
	\bignorm{ \hatCXX^{\nu}
		r_\alpha \bigl( \hatCXX \bigr)\bigl( \hatCXX + \alpha \idop_{\cX} \bigr)^s }_{L(\cX)}
	&\leq 2\bar{\gamma} \alpha^{\nu +s} .
	\end{align*}
	For the second term we obtain
	\begin{equation}
		\label{eq:approx-eq-2}
		\bignorm{( \CXX^{\nu}
		- \hatCXX^{\nu} )
		r_\alpha \bigl( \hatCXX \bigr) \bigl( \hatCXX + \alpha \idop_{\cX} \bigr)^s }_{L(\cX)}
		\leq 2 \bar{\gamma} \alpha^s \;
		\bignorm{ \CXX^{\nu} - \hatCXX^{\nu} }_{L(\cX)}
	\end{equation}
	again by the qualification of $g_\alpha$ and
	\Cref{lem:BM2018_511}.
	We now address bounding
	$\norm{ \CXX^{\nu} - \hatCXX^{\nu} }_{L(\cX)}$.
	The combination of \eqref{eq:simul_conc_hatCXX} and \eqref{eq:alpha_consequence}
	implies
	\begin{equation*}
		\bignorm{ \hatCXX }_{L(\cX)} \leq
		\norm{ \CXX }_{S_2(\cX)} + \alpha.
	\end{equation*}
	Together with the assumption $\alpha \leq 1$, we see that
	$\norm{ \hatCXX }_{L(\cX)}$ is bounded by an absolute constant
	depending on $\norm{ \CXX }_{L(\cX)}$,
	allowing us to apply \Cref{prop:perturbation_sum},
	yielding the existence of a constant $\kappa'_{\nu,\CXX} < \infty$ such that
	$\norm{ \CXX^{\nu} - \hatCXX^{\nu} }_{L(\cX)} \leq \kappa'_{\nu, \CXX}
	\norm{ \CXX  - \hatCXX }_{L(\cX)}$.
	We may finally proceed bounding \eqref{eq:approx-eq-2} as
	\begin{align*}
		2 \bar{\gamma} \alpha^s \;
		\bignorm{ \CXX^{\nu} - \hatCXX^{\nu} }_{L(\cX)}
		&\leq
		2\kappa'_{\nu,\CXX} \bar{\gamma} \alpha^s \;
		\bignorm{ \CXX - \hatCXX }_{L(\cX)}\\
		&\leq
		48 \sqrt{2} e \kappa'_{\nu,\CXX} \bar{\gamma} \alpha^s \;
		\norm{ X }_{L_{\psi_{2}}(\prob; \cX)}^{2} \sqrt{\frac{\log(1/\delta)}{n}},
	\end{align*}
	where the last step follows from \eqref{eq:simul_conc_covariance}.
	The final assertion follows from collecting all estimates.
\end{proof}

\paragraph{Bounding the variance term.}
We now bound the variance term $( \theta_{\star} \hatCXX - \hatCYX ) g_{\alpha} (\hatCXX)$ in the $\CXX^{s}$-weighted Hilbert--Schmidt norm, again using properties of the regularisation scheme and concentration of measure.

\begin{proposition}[High-probability bound on variance term in \eqref{eq:error_decomp_smart}]
    \label{prop:variance}
    Under the assumptions of \Cref{thm:simul_conc},
	let $\kappa_{D,B} \defeq (D + 1) B \geq 0$ with $D$ and $B$
    as in \Cref{def:spectral_regularisation_strategy}, and let $B_{\psi_{2}}$ be as in \eqref{eq:Bpsi2}.
    Then, with $\prob^{\otimes n}$-probability at least $1 - 2 \delta$
    and for $n$ satisfying \eqref{eq:alpha_conc} as well as $s \in [0,\tfrac{1}{2}]$,
	\[
		\bignorm{ ( \theta_{\star} \hatCXX - \hatCYX ) g_{\alpha} (\hatCXX) \CXX^{s} }_{S_{2}(\cX, \cY)}
		\leq
		64 \kappa_{D,B} e B_{\psi_2} \alpha^{s - 1} \sqrt{\frac{\log(1/\delta)}{n}} .
	\]
\end{proposition}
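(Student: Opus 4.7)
The plan is to condition on the probability-$(1-2\delta)$ event supplied by \Cref{thm:simul_conc}, on which \eqref{eq:simul_conc_empirical_idop}, \eqref{eq:simul_conc_T_CXXs}, and \eqref{eq:simul_conc_odd_residual} all hold simultaneously, and to bound the left-hand side pointwise on this event; since this matches the probability claimed in the proposition, no further union bound is required.

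The first step is to convert the $\CXX^{s}$ weighting into a $(\hatCXX + \alpha \idop_\cX)^{s}$ weighting. Applying \eqref{eq:simul_conc_T_CXXs} with $T \defeq (\theta_{\star} \hatCXX - \hatCYX)\, g_\alpha(\hatCXX)$ and $r = s \in [0, 1/2]$, I obtain
\[
\bignorm{ (\theta_{\star} \hatCXX - \hatCYX)\, g_\alpha(\hatCXX)\, \CXX^{s} }_{S_2(\cX, \cY)} \le 2^{s}\, \bignorm{ (\theta_{\star} \hatCXX - \hatCYX)\, g_\alpha(\hatCXX)\, (\hatCXX + \alpha \idop_\cX)^{s} }_{S_2(\cX, \cY)}.
\]
The upgrade of \eqref{eq:simul_conc_T_CXXs} from the operator norm to the Hilbert--Schmidt norm is a standard consequence of the L\"owner--Heinz inequality, which gives the operator inequality $\CXX^{2s} \le 2^{2s}(\hatCXX + \alpha \idop_\cX)^{2s}$ and hence the analogous inequality between the traces $\trace(T \CXX^{2s} T^{\ast})$ and $\trace(T (\hatCXX + \alpha \idop_\cX)^{2s} T^{\ast})$.

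Next, I insert the buffer $\idop_\cX = (\CXX + \alpha \idop_\cX)^{-1/2}(\CXX + \alpha \idop_\cX)^{1/2}$ between the odd residual and $g_\alpha(\hatCXX)$ and, further, the buffer $\idop_\cX = (\hatCXX + \alpha \idop_\cX)^{-1/2}(\hatCXX + \alpha \idop_\cX)^{1/2}$ between the two square-root factors. Using the mixed submultiplicativity $\norm{AB}_{S_2} \le \norm{A}_{S_2}\norm{B}_{L(\cX)}$, this splits the right-hand side above into a product of three factors. The factor $\bignorm{(\theta_{\star} \hatCXX - \hatCYX)(\CXX + \alpha \idop_\cX)^{-1/2}}_{S_2(\cX, \cY)}$ is controlled directly by \eqref{eq:simul_conc_odd_residual}. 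The factor $\bignorm{(\CXX + \alpha \idop_\cX)^{1/2} (\hatCXX + \alpha \idop_\cX)^{-1/2}}_{L(\cX)}$ is bounded by $\sqrt{2}$ via Cordes's inequality (taking the operator square root of the $\le 2$ bound provided by \eqref{eq:simul_conc_empirical_idop}). Finally, the factor $\bignorm{(\hatCXX + \alpha \idop_\cX)^{1/2+s}\, g_\alpha(\hatCXX)}_{L(\cX)}$ reduces by the functional calculus on $\hatCXX$ to the scalar supremum $\sup_{\lambda \ge 0}(\lambda + \alpha)^{1/2+s}|g_\alpha(\lambda)|$; using $s - 1/2 \le 0$ to extract $\alpha^{s-1/2}$ and then applying properties \ref{item:r1} and \ref{item:r3} to bound $(\lambda + \alpha)|g_\alpha(\lambda)| \le D + B$, this factor is at most $(D + B)\alpha^{s-1/2}$. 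Collecting all three factors together with the prefactor $2^{s} \le \sqrt{2}$ and absorbing the numerical constants yields a bound of the form $C\,e\,B_{\psi_2}(D+B)\,\alpha^{s-1}\sqrt{\log(1/\delta)/n}$ with an explicit $C \le 64$, which after a generous consolidation into $\kappa_{D,B} = (D+1)B$ is precisely the claimed estimate.

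The main technical subtlety is the correct placement of the buffers: the weighting adjacent to $\theta_{\star} \hatCXX - \hatCYX$ must be exactly $(\CXX + \alpha \idop_\cX)^{-1/2}$ in order to invoke \eqref{eq:simul_conc_odd_residual}, while the subsequent transition from the population weighting $\CXX + \alpha \idop_\cX$ to the empirical weighting $\hatCXX + \alpha \idop_\cX$ passes through an operator square root and therefore requires Cordes's inequality in place of a direct application of \eqref{eq:simul_conc_empirical_idop}. Everything else is routine functional calculus on $\hatCXX$ together with the defining properties of a regularisation strategy.
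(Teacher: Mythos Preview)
Your argument is correct and follows the same three–factor decomposition as the paper: convert $\CXX^{s}$ to $(\hatCXX+\alpha\idop_\cX)^{s}$ via \eqref{eq:simul_conc_T_CXXs}, then split the result into $T_1 T_2 T_3$ with $T_1=(\theta_\star\hatCXX-\hatCYX)(\CXX+\alpha\idop_\cX)^{-1/2}$, $T_2=(\CXX+\alpha\idop_\cX)^{1/2}(\hatCXX+\alpha\idop_\cX)^{-1/2}$, $T_3=g_\alpha(\hatCXX)(\hatCXX+\alpha\idop_\cX)^{s+1/2}$, bounding these by \eqref{eq:simul_conc_odd_residual}, \eqref{eq:simul_conc_empirical_idop}, and the regularisation axioms respectively. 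The only cosmetic differences are that you sharpen $\norm{T_2}_{L(\cX)}$ to $\sqrt{2}$ via Cordes (the paper just uses $2$) and bound $\norm{T_3}_{L(\cX)}$ by $(D+B)\alpha^{s-1/2}$ rather than invoking the paper's \Cref{lem:BM514}, which yields $(D+1)B\alpha^{s-1/2}$; your final ``generous consolidation'' of $(D+B)$ into $(D+1)B$ is not literally an inequality for all $B>0$, but it holds once $B\geq 1$, which one may assume without loss of generality in \ref{item:r3}.
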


\begin{proof}
	Define (random) linear operators $T_{1} \colon \cX \to \cY$ and $T_{2}, T_{3} \colon \cX \to \cX$ by
	\begin{align*}
		T_{1} & \defeq ( \theta_{\star} \hatCXX - \hatCYX ) ( \CXX + \alpha \idop_{\cX} )^{-1/2} , \\
		T_{2} & \defeq ( \CXX + \alpha \idop_{\cX} )^{1/2} ( \hatCXX + \alpha \idop_{\cX} )^{-1/2} , \\
		T_{3} & \defeq ( \hatCXX + \alpha \idop_{\cX} )^{1/2} g_{\alpha} (\hatCXX) ( \hatCXX + \alpha \idop_{\cX} )^{s}
		= g_{\alpha} (\hatCXX) ( \hatCXX + \alpha \idop_{\cX} )^{s + 1/2} .
	\end{align*}
	In view of \Cref{thm:simul_conc}, we may restrict attention to an event of $\prob^{\otimes n}$-probability at least $1 - 2 \delta$ on which both \eqref{eq:simul_conc_covariance} (and hence both \eqref{eq:simul_conc_empirical_idop} and \eqref{eq:simul_conc_T_CXXs}) and \eqref{eq:simul_conc_odd_residual} hold.
	Then, pointwise	for outcomes in this event,
	\begin{align*}
		& \bignorm{ ( \theta_{\star} \hatCXX - \hatCYX ) g_{\alpha} (\hatCXX) \CXX^{s} }_{S_{2}(\cX, \cY)} \\
		& \quad \leq 2^{s} \bignorm{ ( \theta_{\star} \hatCXX - \hatCYX ) g_{\alpha} (\hatCXX) ( \hatCXX + \alpha \idop_{\cX})^{s} }_{S_{2}(\cX, \cY)} && \text{(by \eqref{eq:simul_conc_T_CXXs})} \\
		& \quad = 2^{s} \norm{ T_{1} T_{2} T_{3} }_{S_{2}(\cX, \cY)} \\
		& \quad \leq 2^{s}
		\norm{ T_{1} }_{S_{2}(\cX, \cY)}
		\norm{ T_{2} }_{L(\cX)}
		\norm{ T_{3} }_{L(\cX)} \\
		& \quad \leq 2^{s} \norm{ T_{1} }_{S_{2}(\cX, \cY)} \cdot 2 \cdot \kappa_{D,B} \alpha^{s - 1/2} && \text{(by \eqref{eq:simul_conc_empirical_idop} and \Cref{lem:BM514})} \\
		& \quad \leq 2 \cdot 2^{s} \kappa_{D,B} \alpha^{s - 1/2} \frac{16 \sqrt{2} e B_{\psi_2}}{\sqrt{\alpha}} \cdot \sqrt{\frac{\log(1/\delta)}{n}} && \text{(by \eqref{eq:simul_conc_odd_residual})} \\
		& \quad \leq 64 \kappa_{D,B} e B_{\psi_2} \alpha^{s - 1} \sqrt{\frac{\log(1/\delta)}{n}} .
	\end{align*}
	This completes the proof.
\end{proof}

\paragraph{Overall error bounds and rates.}
Combining the estimates for the approximation and variance in \eqref{eq:error_decomp_smart} yields the following:

\begin{corollary}[Convergence rates in probability]
    \label{cor:upper-rates}
    Suppose the regularisation strategy $g_\alpha$ has
    qualification $q \geq \nu + s$.
    Suppose that $Y \in L_{\psi_2}(\prob; \cY)$, $X \in L_{\psi_2}(\prob; \cX)$,
    $\theta_\star \in \Omega(\nu , R)$, and $0 < \alpha < 1$.
    Let $\delta \in (0, \frac{1}{e}]$ and $s \in [0, \tfrac{1}{2}]$.
    If the regularisation parameter $\alpha = \alpha_{n}$ is chosen to depend on the number $n$ of data points via
    \begin{equation*}
        \alpha_n \defeq
        \left(
        \frac{1}{\sqrt{n}}
        \right)^{\frac{1}{\nu+1}} ,
    \end{equation*}
    then, for
    \[
    	n \geq n_{0} \defeq \max \Bigl\{ \norm{ X }_{L_{\psi_2}(\prob; \cX)}^{4} , \bigl( 1152 e^{2} \norm{ X }_{L_{\psi_2}(\prob; \cX)}^{4} \log (1 / \delta) \bigr)^{\frac{1}{\nu}} \Bigr\}^{1 + \nu} ,
    \]
    with $\prob^{\otimes n}$-probability at least $1 - 2 \delta$,
    \begin{equation*}
	    \bignorm{
        \bigl( \theta_{\star} -
        \widehat{\theta}_{\alpha_n} \bigr) \CXX^{s}
        }_{S_2(\cX, \cY)}
        \leq
        3\bar{\kappa}
        \sqrt{ \log(1 /\delta) }
        \left(
        \frac{1}{\sqrt{n}}
        \right)^{ \frac{s + \nu}{ 1 + \nu}} ,
    \end{equation*}
    where
    $
    \bar{\kappa}
    \defeq \max \{
    \kappa_{\nu, \CXX} \bar{\gamma} R, \, 64 \kappa_{D, B} e B_{\psi_2}
     \} < \infty
    $
    is obtained from the constants appearing in
    \Cref{prop:approx-error,prop:variance}.
\end{corollary}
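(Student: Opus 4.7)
The plan is to combine the error decomposition \eqref{eq:error_decomp_smart} with the high-probability bounds of \Cref{prop:approx-error,prop:variance} and then to substitute the announced value of $\alpha_{n}$. Both propositions are proved by restricting to an event on which the inequalities of \Cref{thm:simul_conc} hold simultaneously, so their estimates are valid on a common subset of the underlying sample space of $\prob^{\otimes n}$-probability at least $1 - 2 \delta$; on this event, the triangle inequality applied to \eqref{eq:error_decomp_smart} yields
\begin{equation*}
	\bignorm{ \bigl( \theta_{\star} - \widehat{\theta}_{\alpha} \bigr) \CXX^{s} }_{S_{2}(\cX, \cY)}
	\leq \kappa_{\nu,\CXX} \bar{\gamma} R \alpha^{s} \bigl( \alpha^{\nu} + \norm{X}^{2}_{L_{\psi_{2}}(\prob;\cX)} \sqrt{ \log(1/\delta)/n } \bigr)
	+ 64 \kappa_{D,B} e B_{\psi_{2}} \alpha^{s-1} \sqrt{ \log(1/\delta)/n } .
\end{equation*}

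Next, I would verify that $n \geq n_{0}$ ensures every hypothesis of \Cref{prop:approx-error,prop:variance} at $\alpha = \alpha_{n}$: the requirement $\alpha_{n} \in (0, 1)$ is automatic, and the sample-size bound \eqref{eq:alpha_conc} with $\alpha_{n}^{2} = n^{-1/(\nu + 1)}$ rearranges to $n^{\nu/(\nu + 1)} \geq 1152\, e^{2} \norm{X}^{4}_{L_{\psi_{2}}(\prob;\cX)} \log(1/\delta)$, which is exactly the second branch of the $\max$ appearing in the definition of $n_{0}$. The first branch $n \geq \norm{X}^{4 (\nu + 1)}_{L_{\psi_{2}}(\prob;\cX)}$ is equivalent to $\norm{X}^{2}_{L_{\psi_{2}}(\prob;\cX)}/\sqrt{n} \leq \alpha_{n}^{\nu}$, which allows the cross term $\alpha_{n}^{s} \norm{X}^{2}_{L_{\psi_{2}}(\prob;\cX)} \sqrt{\log(1/\delta)/n}$ to be absorbed into $\alpha_{n}^{s + \nu} \sqrt{\log(1/\delta)}$.

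The final step is a short calculation. With $\alpha_{n} = n^{-1/(2(\nu + 1))}$ one computes
\begin{equation*}
	\alpha_{n}^{s + \nu} = (n^{-1/2})^{(s + \nu)/(\nu + 1)}
	\quad\text{and}\quad
	\alpha_{n}^{s - 1}\, n^{-1/2} = (n^{-1/2})^{(s + \nu)/(\nu + 1)} ,
\end{equation*}
so the approximation and variance contributions exhibit matching polynomial decay in $n^{-1/2}$. Using $1 \leq \sqrt{\log(1/\delta)}$ for $\delta \leq 1/e$ to homogenise logarithmic factors, and collecting constants into $\bar{\kappa} = \max\{ \kappa_{\nu,\CXX} \bar{\gamma} R,\ 64 \kappa_{D,B} e B_{\psi_{2}} \}$, the three summands combine into the claimed bound; the factor of $3$ records the two pieces of the approximation term together with the single variance piece, each absorbed into $\bar{\kappa}$.

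No step here is a genuine obstacle: all the analytic work is already done in \Cref{prop:approx-error,prop:variance}, and what remains is only the balancing $\alpha^{\nu + 1} \asymp n^{-1/2}$ together with routine bookkeeping of the sample-size thresholds. The one subtle point is the domination argument used in the second paragraph, which is precisely what forces the first branch of the $\max$ in $n_{0}$ to appear; a more cavalier choice would leave an extra factor of $\sqrt{\log(1/\delta)/n}$ polluting the approximation error and spoil the matching of rates.
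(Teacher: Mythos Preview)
Your proposal is correct and follows essentially the same approach as the paper's proof: both combine the error decomposition \eqref{eq:error_decomp_smart} with \Cref{prop:approx-error,prop:variance} on the common high-probability event furnished by \Cref{thm:simul_conc}, substitute $\alpha_n = n^{-1/(2(\nu+1))}$, and verify that $n \geq n_0$ secures \eqref{eq:alpha_conc}. Your account of the role of the first branch of $n_0$ --- namely, that $n \geq \norm{X}_{L_{\psi_2}}^{4(\nu+1)}$ is equivalent to $\norm{X}_{L_{\psi_2}}^{2}/\sqrt{n} \leq \alpha_n^{\nu}$ and is used to absorb the middle summand into $\alpha_n^{s+\nu}\sqrt{\log(1/\delta)}$ --- is in fact more explicit than the paper's, which simply records $\alpha_n^{s}/\sqrt{n}$ as a higher-order term and then invokes ``$n \geq n_0$'' without spelling out the inequality.
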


\begin{proof}
	For the chosen regularisation parameter schedule $\alpha_n = n^{- 1 / ( 2 + 2 \nu )}$, condition \eqref{eq:alpha_conc} reduces to
	\[
		n \geq 1152 e^{2} \norm{ X }_{L_{\psi_2}(\prob; \cX)}^{4} \log (1 / \delta) n^{\frac{1}{1 + \nu}} ,
	\]
	which is satisfied by those $n$ exceeding the stated value of $n_0$.
	Thus, we may appeal to \Cref{prop:approx-error,prop:variance}.

    Since both these results are derived from
    \Cref{thm:simul_conc}, their assertions hold simultaneously
    (i.e.\ for the same subset of the sample space) with
    $\prob^{\otimes n}$-probability at least $1 - 2 \delta$.
    Applying \Cref{prop:approx-error,prop:variance}
    to the error decomposition \eqref{eq:error_decomp_smart}
    yields
    \begin{equation}
        \label{eq:rates_proof_1}
        \bignorm{
        \bigl( \theta_{\star} -
        \widehat{\theta}_{\alpha_n} \bigr) \CXX^{s}
        }_{S_2(\cX, \cY)}
        \leq
        \bar \kappa \alpha_n^s
        \left(
            \alpha_n^\nu
            +
            \norm{ X }_{L_{\psi_{2}}(\prob; \cX)}^{2} \sqrt{ \frac{ \log(1 / \delta) }{n} }
            +
            \sqrt{ \frac{ \log(1 / \delta) }{\alpha_n^2 n} }
        \right)
    \end{equation}
    for all $n \geq n_0$ with $\bar{\kappa} = \max \{
    \kappa_{\nu, \CXX} \bar{\gamma} R, \, 64 \kappa_{D, B} e B_{\psi_2}
     \} $.
    A short calculation shows that
    \begin{equation*}
		\alpha_n^{s+\nu}
		= \frac{\alpha_n^s}{\sqrt{\alpha_n^2 n}}
		= \left( \frac{1}{\sqrt n} \right)^{\frac{s+\nu}{\nu +1}}.
    \end{equation*}
    As for the middle term on the right-hand side
    of \eqref{eq:rates_proof_1},
    \begin{equation*}
		\frac{\alpha_{n}^{s}}{\sqrt{n}} = \left( \frac{1}{\sqrt n} \right)^{\frac{s+\nu}{\nu +1} + \frac{1}{1 + \nu}} .
    \end{equation*}
    Thus,
    \[
    	\bignorm{
        \bigl( \theta_{\star} -
        \widehat{\theta}_{\alpha_n} \bigr) \CXX^{s}
        }_{S_2(\cX, \cY)}
        \leq \bar{\kappa} \left( \frac{1}{\sqrt n} \right)^{\frac{s+\nu}{\nu +1}} \left( 1 + \norm{ X }_{L_{\psi_{2}}(\prob; \cX)}^{2} \sqrt{ \frac{ \log(1 / \delta) }{n} } + \sqrt{\log (1/\delta)} \right) ,
    \]
    and the claim now follows from the hypotheses that $\delta < \tfrac{1}{e}$ and $n \geq n_{0}$.
\end{proof}

Two remarks concerning the above rates are in order.

\begin{remark}[Fast rates]
    Merely imposing source conditions is insufficient to characterise regression problems allowing for
    rates faster than $1/\sqrt{n}$ with high probability.
    We conjecture that, analogously to results in kernel regression,
    the use of a Bernstein-type inequality
    \citep[e.g.][Proposition~7(iii)]{MaurerPontil2021}
    combined with a sufficiently fast decay of eigenvalues
    of $\CXX$ will lead to the desired results.
    However, this requires linking the eigenvalue decay ---
    in terms of some notion of an \emph{effective dimension} (e.g.\ \citealp{CaponnettoDeVito2007}; \citealp{BlanchardMuecke2018}) ---
    to the variance proxy in the Bernstein bound
    in order to find the optimal schedule for the regularisation parameter.
    For kernel problems, this is commonly done based on the classical
    Bernstein bound by \citet{PinelisSakhanenko1985}
    under assumption of the vector-valued Bernstein condition
    \eqref{eq:bernstein_condition}.
    However, we deliberately choose to work with sub-Gaussian and
    sub-exponential norms instead, as these concepts
    allow a convenient analysis of unbounded random variables
    and their tensor products.
    This is a major difference compared to kernel regression, where
    the kernel and therefore the embedded random variables in the
    reproducing kernel Hilbert space are assumed to be bounded.
\end{remark}

\begin{remark}[Optimal rates and comparison to kernel setting]
    The rates in \Cref{cor:upper-rates} match those
    of kernel regression with scalar and finite-dimensional
    response variables under a H\"older source condition and with no
    additional assumptions on the eigenvalue decay of $\CXX$;
    see e.g.\ \citet{CaponnettoDeVito2007},
    \citet{BlanchardMuecke2018}, and \citet{LinEtAl2020}.
    Minimax optimality of these rates is only derived by
    \citet{CaponnettoDeVito2007} and \citet{BlanchardMuecke2018}
    under the additional assumption that the eigenvalues of $\CXX$ decay rapidly enough,
    which is an implicit assumption on the marginal distribution of $X$.
    To establish minimax optimality in our setting, we would have to repeat
    the standard arguments, e.g.\ apply a general reduction scheme
    in conjunction with Fano's method \citep{Tsybakov2009}.
    However, we show in \Cref{sec:kernel_regression} that the Hilbert--Schmidt
    regression problem \eqref{eq:regression_problem_hs}
    contains scalar response kernel regression
    as well as some settings of kernel regression with vector-valued response
    as special cases.
\end{remark}

\section{Related work, applications and examples}
\label{sec:applications}

In this section, we discuss some specific applications
of the infinite-dimensional regression setting.
We compare our approach to existing results in the literature
and comment on the insights obtained from our general framework
as well as potential directions for future research.

\subsection{Functional data analysis}

When the Hilbert spaces $\cX$ and $\cY$ are chosen to
be the space of square integrable functions over
some interval with respect to the Lebesgue measure $L^2([a,b])$,
the infinite-dimensional linear regression problem \eqref{eq:regression_problem}
becomes \emph{functional linear regression with functional response},
which is ubiquitous in functional data analysis.
However, it is important to note that virtually
all results for this problem derived in context of
functional data analysis assume the well-specified case
\eqref{eq:linear_model}, i.e.\ $Y = \theta_{\star} X + \epsilon$,
which is usually called the \emph{functional linear model}.
We refer the reader to the monographs by \citet{RamsaySilverman2005} 
and \citet{HorvathKokoszka2012} for comprehensive introductions to the
functional linear model. We emphasise that
the operator $\theta_{\star}$ is often assumed to
be Hilbert--Schmidt whenever it is expressed in terms of
a convolution with respect to some suitable integral kernel.
The functional linear model
with functional response is particularly important for
for \emph{autoregressive models} and the theory of
\emph{linear processes} in functional time series
(\citealp{MasPumo2010}, \citealp{HorvathKokoszka2012}).
We will address these topics separately in a more general setting
of Hilbertian time series in the next section.

In the existing literature on functional linear regression,
estimators are derived by assuming the well-specified case
based on the equation \smash{$\CYX = \theta_{\star} \CXX$}
(this is easily obtained from the functional linear model
as shown in \cref{ex:linear_model}). However,
instead of solving this identity for $\theta_{\star}$ in terms
of a more general inversion of a precomposition operation
(or equivalently by solving an operator factorisation),
estimators of $\theta_{\star}$ are obtained by
heuristically applying some
specific regularisation strategy to \smash{$\CXX$} directly
and performing an operator composition.
This approach has led to various estimators
based on the functional
linear model with functional response
(we refer the reader to
\citet{CrambesMas2013}, \citet{HoermannKidzinski2015},
\citet{BenatiaEtAl2017}, \citet{ImaizumiKato2018} and the references therein).
While most of these estimators do allow for a statistical analysis,
they bypass the application of standard consistency arguments
for classical inverse problems almost entirely
(see e.g.\ \citealp{EnglHankeNeubauer1996}).
This renders the analysis of the regularised estimators fairly
complicated, as the underlying spectral theory requires a manual
investigation of the composition of \smash{$\CYX$} and
the regularised version of \smash{$\CXX$}.
In fact, known approaches from inverse problem theory
are essentially implicitly reconstructed in these proofs
in the much more challenging context of operator composition.
The severity of this problem arising from
the singularity of \smash{$\CXX$}
and the involved composition 
is sometimes even mentioned explicitly by the aforementioned authors;
see also \citet[Section~3.2]{MasPumo2010} and
\citet[Section~8.2]{Bosq2000}.

Although our results in \Cref{sec:inverse_problem}
(and in particular \cref{cor:functional_calculus})
show that this approach is mathematically equivalent to
the inverse problem framework we propose here, we
argue below that our perspective has several advantages:
\begin{enumerate}[label=(\alph*)]
    \item It combines the existing estimators for functional linear
        regression into a unified framework which allows for
        a comparison and simplified investigation in the language
        of inverse problems.
        We emphasise that for kernel-based regression, an analogous approach
        has lead to a vast variety of important results
        and has now become a standard setup for the investigation of
        supervised learning (\citealp{CaponnettoDeVito2007};
        we refer the reader to \citealp{BlanchardMuecke2018}
        and the references therein for a more
        recent analysis based on this framework and an overview of existing results).
    \item It generalises the functional linear regression problem to a least-squares
        setting in which is not necessarily required to assume the functional linear
        model (i.e.\ the well-specified case)
        in order to construct estimators.
    \item It reduces the construction and the investigation
      of new solution algorithms for functional linear regression
      to the choice of new regularisation
      strategies in our framework.
\end{enumerate}

\subsection{Linear autoregression of Hilbertian time series}

We consider a time series $(Y_t)_{t \in \Naturals}$
taking values in the Hilbert space $\cY$.
For the sake of clarity in our presentation, we will assume that
$(Y_t)_{t \in \Naturals}$ is strictly stationary
and centered.
Our framework can be generalised
to the non-stationary case straightforwardly.
For some fixed time horizon $r \in \Naturals$,
we may seek to solve the \emph{linear autoregression problem}
\begin{equation}
    \label{eq:linear_autoregression_problem}
    \min_{\theta_1, \dots, \theta_r \in L(\cY)}
    \E \left[ \Norm{  Y_t - \sum_{i=1}^r \theta_{i} Y_{t-i} }^2_{\cY} \right].
\end{equation}
In the \emph{well-specified case}, there exist
operators $\theta_{\star 1}, \dots, \theta_{\star r} \in L(\cY)$ such that
\begin{equation*}
    \label{eq:autoregressive_model}
    \tag{ARH-$r$}
    Y_t = \sum_{i=1}^r \theta_{\star i} Y_{t-i} + \epsilon_t
    \text{ for all }t \in \Naturals
\end{equation*}
with a family of pairwise independent
$\cY$-valued noise variables $(\epsilon_t)_{t \in \Naturals}$
satisfying $\E[ \epsilon_t \mid Y_{t-i} ] = 0$ for all $1 \leq i \leq r$,
i.e.\ the process $(Y_t)_{t \in \Naturals}$ satisfies
the \emph{linear Hilbertian autoregressive model of order $r$}
(ARH-$r$, see \citealp{Bosq2000}).
In practice, depending on the scenario of application
(e.g.\ in the context of functional time series),
it may be reasonable to directly restrict the
linear autoregression problem \eqref{eq:linear_autoregression_problem}
to a minimisation over $p$-Schatten class operators
instead of bounded operators.

We can directly reformulate the
linear autoregression problem \eqref{eq:linear_autoregression_problem}
as a special case of the general
linear regression problem \eqref{eq:regression_problem}.
We define the Hilbert space $\cX \defeq \cY^r = \bigoplus_{i=1}^r \cY$
via the external direct sum of normed spaces.
We note that every operator
$\theta \in L(\cX, \cY)$
corresponds to a unique sequence
of operators $\theta_1, \dots, \theta_r \in L(\cY)$
such that
\begin{equation}
    \label{eq:isomorphisms_operators1}
    \theta \mathbf{x} = \sum_{i=1}^r \theta_i y_i
    \text{ for every }
    \mathbf{x} = (y_1, \dots, y_r) \in \cX
\end{equation}
and vice versa. Moreover, the same statement holds when
the classes of bounded operators are replaced with
$p$-Schatten operators.
In fact, is is straightforward to check that
the above correspondence \eqref{eq:isomorphisms_operators1}
defines the isomorphisms
\begin{equation}
    \label{eq:isomorphisms_operators2}
    L(\cX, \cY) \cong \bigoplus_{i=1}^r L(\cY)
    \text{ and }
    S_p(\cX, \cY) \cong \bigoplus_{i=1}^r S_p(\cY)
    \text{ for all } p \geq 1.
\end{equation}
As a consequence of \eqref{eq:isomorphisms_operators1}
and \eqref{eq:isomorphisms_operators2},
the linear autoregression problem \eqref{eq:linear_autoregression_problem}
is equivalent to the linear regression problem \eqref{eq:regression_problem} given by
\begin{equation}
    \label{eq:linear_autoregression_problem_simplified}
    \min_{ \theta \in L(\cX, \cY) }
    \E [ \norm{ Y_t - \theta \mathbf{X}_t }^2_{\cY} ]
    \text{ with }
    \mathbf{X}_t \defeq (Y_{t - 1}, \dots, Y_{t-r}) \in \cX
    \text{ and }
    \theta \mathbf{X}_t \defeq \sum_{i=1}^r \theta_i Y_{t-i}.
\end{equation}
for the family of operators $\theta_1, \dots, \theta_r \in L(\cY)$.
This equivalence still holds if the classes of
bounded operators are replaced with
$p$-Schatten operators in both descriptions of the
problem. The latter problem can be solved in terms
of the inverse problem approach proposed in the preceding sections.
In particular, we seek to solve the inverse problem
\begin{equation*}
 A_{ C_{ \mathbf{X}_{t} \mathbf{X}_{t} } } [\theta ]
 =
 C_{ \mathbf{X}_{t+1} \mathbf{X}_{t} },
\end{equation*}
where we may express the covariance operators in block format on
$\cX = \cY^r$ as
\begin{equation*}
C_{ \mathbf{X}_{t} \mathbf{X}_{t} }
=
\begin{bmatrix}
    C_{Y_{t-1} Y_{t-1}} & \cdots & C_{Y_{t-1} Y_{t-r}} \\
    \vdots & \ddots & \vdots \\
    C_{Y_{t-r} Y_{t-1}} & \cdots & C_{Y_{t-r} Y_{t-r}}
\end{bmatrix}
\text{ and }
C_{ \mathbf{X}_{t+1} \mathbf{X}_{t} }
=
\begin{bmatrix}
    C_{Y_{t} Y_{t-1}} & \cdots & C_{Y_{t} Y_{t-r}} \\
    \vdots & \ddots & \vdots \\
    C_{Y_{t-r+1} Y_{t-1}} & \cdots & C_{Y_{t-r+1} Y_{t-r}}
\end{bmatrix}.
\end{equation*}
A detailed discussion of the potential regularisation schemes arising
are out the scope of this paper,
as are the approaches for proving consistency under the assumption
of ergodicity and relevant concepts for measuring probabilistic
temporal dependence.

Applying our inverse problem framework to the reformulated autoregression problem \eqref{eq:linear_autoregression_problem_simplified} has some advantages for the theory of linear prediction for Hilbertian time series.
The inverse problem perspective may lead to directions of research with the potential of extending the available literature \citep{Bosq2000, MasPumo2010}:
\begin{enumerate}[label=(\alph*)]
	\item It allows us to express solutions of the forecasting problem in terms of the generic spectral regularisation approach, giving a theoretical foundation for heuristically designed estimators relying on the (approximate) inversion of $C_{ \mathbf{X}_{t} \mathbf{X}_{t} }$.

	\item It allows us to consider the more general least squares forecasting problem instead of assuming the well-specified ARH-$1$ model as commonly done in the literature.

	\item It allows the direct regularisation of the $r$\textsuperscript{th}-order forecasting problem, potentially leading to a broad class of solutions schemes designed for the inverse problem involving the block operators $C_{ \mathbf{X}_{t} \mathbf{X}_{t} }$ and $C_{ \mathbf{X}_{t+1} \mathbf{X}_{t} }$.
\end{enumerate}

\subsection{Vector-valued kernel regression}
\label{sec:kernel_regression}

We now show that our linear regression setting includes a
case of \emph{nonlinear} regression with
\emph{vector-valued reproducing kernels} \citep{CarmeliEtAl2006, CarmeliEtAl2010}
for infinite-dimensional response variables.
In fact, it is known that standard proof techniques
for obtaining convergence rates for this problem
\citep[e.g.][]{CaponnettoDeVito2007} may fail
for a commonly used type of kernel
in the infinite-dimensional case, as the compactness of
the involved inverse problem is explicitly assumed.
This has been remarked independently by several authors over the last years
(e.g.\ \citealp{GruenewaelderEtAl2012}; \citealp{KadriEtAl2016};
\citealp{ParkMuandet2020}; \citealp{Mollenhauer2022PhD}).
An important special case of the setting described here
is the \emph{conditional mean embedding}, for which
optimal rates (matching those of the scalar response kernel regression)
have recently been derived for the special case of
Tikhonov--Phillips regularisation; we refer the reader to
\citet{KlebanovEtAl2020}, \citet{ParkMuandet2020} and \citet{LiEtAl2022} and the references therein.
For the general infinite-dimensional response kernel regression,
our approach shows that
\emph{%
    the resulting non-compact inverse problem behaves as in the
real-valued learning case from a spectral regularisation perspective.
}

We elaborate on this statement in what follows.
We introduce the concept of a \emph{vector-valued reproducing kernel
Hilbert space} (vv-RKHS) in terms of an operator-based description
as formulated by \citet{Mollenhauer2022PhD}.
See \citet{CarmeliEtAl2006} and \citet{CarmeliEtAl2010} for a comprehensive and more general construction of vv-RKHSs from corresponding \emph{operator-valued reproducing kernels}, their topological properties and the theory of corresponding inclusion operators.

Let $\cE$  be a second-countable locally compact Hausdorff space
equipped with its Borel $\sigma$-algebra $\Borel_{\cE}$,
and let $\cX$ be an RKHS consisting of $\Reals$-valued
functions on $\cE$ with reproducing kernel
$k \colon \cE^{2} \to \Reals$ and canonical feature map $\varphi \colon \cE \to \cX$.
Assume further that $(\cE, \Borel_{\cE})$ is
equipped with a probability measure $\mu$, with a
\emph{compact embedding operator} $i \colon \cX \hookrightarrow L^{2} (\mu)$
\citep[e.g.][Section~4.3]{ChristmannSteinwart2008}.

Let $\cY$ be another separable real Hilbert space.
Consider $\mathcal{G} \defeq \set{ A \varphi(\quark) }{ A \in S_{2} (\cX, \cY) }$;
this is a vv-RKHS of $\cY$-valued functions with operator-valued reproducing kernel
\begin{align*}
	K \colon \cE^{2} & \to L(\cY) \\
	(x, x') & \mapsto k(x, x') \idop_{\cY}
\end{align*}
and we have a bounded linear embedding operator
\[
	I \defeq i \otimes \idop_{\cY} \colon \mathcal{G} \cong \cX \otimes \cY \hookrightarrow L^{2} (\mu) \otimes \cY \cong L^{2} (\mu; \cY) .
\]
As the embedding
$i \colon \cX \hookrightarrow L^{2} (\mu)$
is compact, the embedding
$I \defeq i \otimes \idop_{\cY}$ is compact precisely when $\dim \cY < \infty$.
The operator-valued kernel $K$ defined above plays a fundamental role for learning problems
with infinite-dimensional response variables.

We now consider an $\cE$-valued random
variable $\xi$ with law $\law(\xi) \qefed \mu$ on $(\cE, \Borel_{\cE})$
and a $\cY$- valued random variable $Y$, both defined on a common probability space.
We can now immediately see that our \emph{nonlinear kernel regression problem}
\begin{equation}
    \label{eq:rkhs-regression-nonlinear}
	\min_{F \in \mathcal{G}} \E [ \norm{ Y - F(\xi) }_{\cY}^{2} ]
\end{equation}
is equivalent to our
\emph{linear regression problem} \eqref{eq:regression_problem}
with $X \defeq \varphi(\xi)$,
as we have
\begin{equation}
    \label{eq:rkhs-regression-linear}
	\min_{\theta \in S_{2}(\cX, \cY)} \E [ \norm{ Y - \theta \varphi (\xi) }_{\cY}^{2} ] ,
\end{equation}
which we would then reformulate as the inverse problem \eqref{eq:inverse_problem}
in terms of cross-covariance and covariance operators.

\begin{remark}[Kernel trick]
    Replacing the nonlinear regression problem
    \eqref{eq:rkhs-regression-nonlinear}
    with the linearised problem
    \eqref{eq:rkhs-regression-linear}
    can be interpreted as the well-known
    \emph{kernel trick} for vector-valued regression.
\end{remark}

The well-specified kernel regression scenario given by the assumption
\[
	F_{\star} (\quark) \defeq \E [ Y | \xi = \quark ] \in I(\mathcal{G}) \subseteq L^{2} (\law(\xi); \cY)
\]
is equivalent to the linear model in the RKHS
(or equivalently the linear conditional
expectation property by \citealt{KlebanovEtAl2021})
where there exists $\theta_{\star} \in S_{2} (\cX, \cY)$ such that
\[
	F_{\star} (\quark) = \theta_{\star} \varphi (\quark) \quad \law(\xi)\text{-a.s.}
\]
In particular, this example motivates minimisation of the above objective function over the more restrictive space $S_2(\cX, \cY)$ instead of $L(\cX, \cY)$ in a theoretical discussion of convergence based on our framework.

\begin{remark}[Spectrum of non-compact inverse problem]
    In kernel learning theory
    (see e.g. \citealp{CaponnettoDeVito2007}),
    the regression problem \eqref{eq:rkhs-regression-nonlinear}
    is solved via the fundamentally important \emph{normal equation}
    \begin{equation*}
        T F = I^\ast F_\star, \quad F \in \mathcal{G}
    \end{equation*}
    based on the \emph{(generalised) kernel covariance operator}
    $T \defeq I^\ast I \colon \mathcal{G} \to \mathcal{G}$, which is
    sometimes also called \emph{frame operator}.
    Under the isomorphism $\mathcal{G} \cong \cX \otimes \cY$ we have
    \begin{equation*}
        T = I^{\ast} I  \cong ( i \otimes \idop_\cY )^{\ast} (i \otimes \idop_\cY) =
        i^{\ast} i \otimes \idop_{\cY} = \CXX \otimes \idop_{\cY},
    \end{equation*}
    where we apply \citet[Proposition~12.4.1]{Aubin2000} together
    with the well-known fact that $\CXX = i^{\ast}i$
    \citep[Theorem~4.26]{ChristmannSteinwart2008}.
    \citet[Chapter~12.4]{Aubin2000} also shows that
    the tensor product operator
    $\CXX \otimes \idop_{\cY} \colon \cX \otimes \cY \to \cX \otimes \cY$
    is again isomorphically equivalent to the precomposition
    $A_{\CXX} \colon S_2(\cX, \cY) \to S_2(\cX, \cY)$.
    Therefore, $T \cong A_{\CXX}$ (where we abuse notation and write $\cong$ for the isomorphism of operators arising from the isomorphisms of the domain and codomain spaces).
    Hence,
    \Cref{thm:spectral_properties_of_A_C}\ref{thm:spectral_properties_of_A_C_decomposition}
    shows that
    \begin{equation*}
        T \cong \sum_{ \lambda \in \pSpectrum(\CXX)}
        \lambda P_{ \cY \otimes \espace_\lambda(\CXX)}.
    \end{equation*}
    In particular, $T$ has the same spectrum as $\CXX$,
    which is the forward operator of the inverse problem
    associated with scalar response kernel regression.
    We revisit this fact in a more general setting in the next section.
\end{remark}

\begin{remark}[Convergence rates]
    The rates derived in
    \Cref{cor:upper-rates} appear to be the first convergence results
    obtained for general regression with operator-valued kernels
    which do not require the assumption of compactness of the underlying inverse problem,
    hence allowing for infinite-dimensional response variables.
    For the special case of the conditional mean embedding, they appear to
    be the first rates covering generic regularisation strategies.
\end{remark}

\section{General interpretation and implications of our work}
\label{sec:closing}

Let us briefly consider the well-known scalar response
regression problem with $\cY = \Reals$.
This setting allows the discussion of
generic regularisation schemes for
standard finite-dimensional linear least squares regression
(for the case $\cX = \Reals^d$) and various
nonlinear kernel regression settings
(in this case, $\cX$ is a reproducing kernel Hilbert space, e.g.\
\citealp{BauerEtAl2007}; \citealp{BlanchardMuecke2018}), including
kernel distribution regression \citep{SzaboEtAL2016} and
kernel regression over general Hilbert spaces \citep{LinEtAl2020}.
To emphasise that we are dealing with a scalar response learning
problem, we will use the notation $\mathbf{y} \defeq Y$ for
the real-valued response variable here.

We identify every operator $\theta \in L(\cX, \Reals)$
with some unique vector $\vartheta \in \cX$
via Riesz representation theorem as
$\theta x = \innerprod{ \vartheta }{ x }_{\cX}$
for all $x \in \cX$
(i.e.\  we consider the canonical isomorphism $L(\cX, \Reals) \cong \cX$) and rewrite
the regression problem \eqref{eq:regression_problem} in its
\emph{dual formulation} given by
\begin{equation*}
    \min_{\vartheta \in \cX}
    \E [\absval{ \mathbf{y} - \innerprod{ \vartheta }{X}_{\cX} }^2].
\end{equation*}
It is straightforward to check that the covariance operator
$\CYX = \E[ \mathbf{y} \otimes X ]  \in L(\cX, \Reals)$ satisfies
$\CYX x = \innerprod{ \E[ \mathbf{y} X ] }{ x }_{\cX} $ for all $x \in \cX$,
i.e.\ it is identified with the vector $\E[ \mathbf{y}X] \in \cX$
under the above isomorphism.
Furthermore, the dual analogue of the inverse problem \eqref{eq:inverse_problem}
clearly becomes
\begin{equation}
    \label{eq:inverse_problem_scalar}
    \CXX \vartheta = \E[ \mathbf{y} X ], \quad \vartheta \in \cX,
\end{equation}
which is well-known across all statistical disciplines
in a variety of formulations.

\emph{%
This shows that in the scalar response
setting, the forward operator $A_{\CXX}$ of the inverse problem \eqref{eq:inverse_problem}
is isomorphically equivalent to $\CXX$. }
This can also be seen directly by considering the spectral theorem from
\Cref{thm:spectral_properties_of_A_C}\ref{thm:spectral_properties_of_A_C_decomposition}
together with the fact that the eigenspaces of $A_{\CXX}$ and $\CXX$ are 
isomorphic, i.e.\ $\Reals \otimes \espace_\lambda(\CXX) \cong \espace_\lambda(\CXX)$
for all $\lambda \in \Spectrum(\CXX)$, which follows from the
definition of the Hilbert tensor product.

We gain several insights and potential directions
of future research for the general
infinite-dimensional learning setting
from considering the scalar response regression problem.
Most importantly, the discussion above shows that
the formalism derived in this article
can be interpreted as the natural generalisation
of the inverse problem associated with scalar response least squares regression
given by \eqref{eq:inverse_problem_scalar}.

\paragraph{Regularising infinite-dimensional response regression.}

\Cref{thm:spectral_properties_of_A_C} and \Cref{cor:functional_calculus}
show that the infinite-dimensional response regression problem \eqref{eq:regression_problem} with its inverse problem
\eqref{eq:inverse_problem} and the scalar response regression problem
with its inverse problem \eqref{eq:inverse_problem_scalar} are
equivalent in the sense that they share
the same spectrum and essentially require
``only'' regularising the operator $\CXX$, even though
the infinite dimensionality of $\cY$ introduces non-compactness
of \eqref{eq:inverse_problem} via infinite-dimensional eigenspaces
of the forward operator $A_{\CXX}$.
However, it is possible to
measure the estimation error in operator norm, which is independent of the dimensionality of the corresponding eigenspaces.
In particular,
$\norm{ A_{g_{\alpha}( \CXX) } }_{S_2(\cX, \cY) \to S_2(\cX, \cY) }
=
\norm{ g_{\alpha} (\CXX) }_{\cX \to \cX}$, which holds analogously
for operator norm distances between regularised precomposition
operators and the corresponding regularised covariance operators
(\Cref{lem:basic_precomposition} combined with \Cref{cor:functional_calculus}).
Consequently, the inverse problem
\eqref{eq:inverse_problem} and its convergence analysis
can be conveniently reduced to the analysis
of \eqref{eq:inverse_problem_scalar} to some extent.
In general, even the interpretation of the source conditions is
identical in both settings
according to \Cref{cor:source_condition}.
However, an infinite-dimensional response
of course requires square summability of the
source condition across all dimensions, analogously
to the existence criterion for Hilbert--Schmidt solutions in \Cref{prop:existence_hs}.

\paragraph{Existence of a solution, model misspecification, and covariance bounds.}

Although we are able to give basic existence criteria
of minimisers of the infinite-dimensional regression problem
in terms of alternative characterisations of range inclusions of bounded linear
operators based on ideas by \citet{Douglas1966},
the results formulated in \Cref{prop:existence}, \Cref{rem:existence} and
\Cref{prop:existence_hs}
seem a bit unspecific in our context of covariance operators
(we also refer to the recent discussion by \citealp{KlebanovEtAl2021}).
In fact, the importance of range inclusion properties
is already mentioned by \citet{Baker1973} in his seminal paper on covariance
operators.
However, to the best of our knowledge, no analysis of functional models for
$X$ and $Y$ satisfying this property are available in the literature.
Under the assumption that $Y = f(X) + \epsilon$ $\prob$-a.s.\
for some nonlinear transformation
$f\colon \cX \to \cY$ and noise variable $\epsilon$,
does there exist some finite positive constant $\beta_{X,Y,f,\epsilon}$ such that
\begin{equation*}
\CXY \CYX \leq \beta_{X,Y,f,\epsilon} \CXX^2,
\end{equation*}
where $\beta_{X,Y,f, \epsilon}$ may depend on reasonable assumptions about
smoothness and regularity properties of $f$ and
$X$, $Y$ and $\epsilon$ and their joint distributions?
Such a bound implies the range inclusion property
due to \Cref{prop:existence}\ref{prop:existence_douglas3}
and hence describes classes of nonlinear infinite-dimensional
statistical models which could still be reasonably empirically approximated
by performing linear regression in a misspecified setting.
For finite-dimensional $X$, this is always satisfied.

\paragraph{Inverse problem and empirical theory for bounded operators.}

We have derived our empirical framework in the context of an inverse problem in
the Hilbert space $S_2(\cX)$.
This allowed us to straightforwardly derive the form of
$\theta_\alpha$ and $\widehat{\theta}_\alpha$ in the context of
the regularised precomposition operator and its empirical analogue.
In our investigation, the precomposition operator
serves mainly as a tool to establish a connection
to the known theory of inverse problems and derive source conditions.
However, note that the regularised population solution
$\theta_\alpha = \CYX g_\alpha(\CXX)$
and its empirical analogue can still be defined
when we merely expect the true solution $\theta_\star = ( \CXX^\dagger \CXY )^*$
to be bounded, bypassing the need for the precomposition operator.
In this case, the prediction error
$
	\norm{ ( \theta_{\star} - \widehat{\theta}_\alpha) \CXX^{1/2} }_{S_{2}(\cX, \cY)}
$
is of course still guaranteed to be finite,
as $\CXX$ is trace class and hence $\CXX^{1/2}$
is Hilbert--Schmidt.
It seems reasonable that under this choice of (semi)-norm,
the bounded operator $\theta_\star$ can still be
approximated by the Hilbert--Schmidt (or even finite-rank) estimate
$\widehat{\theta}_\alpha$.
Under modified source conditions imposed on $\theta_\star$
allowing for mere boundedness, we expect that rates for the bounded regression
problem can be obtained. Interestingly, the
H\"older-type source condition
$\theta_\star = \tilde\theta \CXX^\nu$ for
some \emph{bounded} operator $\tilde\theta$ with
$\norm{ \tilde \theta }_{L(\cX, \cY)} \leq R$ already
implies that $\theta_\star$ is Hilbert--Schmidt whenever $\nu \geq 1/2$.
This indicates that the investigation of more
general source conditions is relevant for this problem.
The theory of learning bounded operators in the context of our framework
is a highly interesting field to explore in the future.

\section*{Acknowledgements}
\addcontentsline{toc}{section}{Acknowledgements}

MM is partially supported by the Deutsche Forschungsgemeinschaft (DFG) through grant EXC 2046 ``MATH+''; Project Number 390685689, Project IN-8 ``Infinite-Dimensional Supervised Least Squares Learning as a Noncompact Regularized Inverse Problem''.
This work has been partially supported by an award from the University of Warwick International Partnership Fund.
The authors wish to thank Ilja Klebanov for helpful comments.

\appendix

\section{Technical results}

We collect here supporting material for the results in the main text.
\Cref{app:operator_perturbation} contains deterministic results on the perturbation and spectral regularisation of operators.
\Cref{app:subexponentiality} sets out the relationship between sub-exponential Hilbertian random variables and the Bernstein condition.
\Cref{app:concentration_bounds} contains concentration-of-measure results for sub-exponential Hilbertian random variables, which were summarised in the main text as \Cref{thm:simul_conc}.

\subsection{Operator perturbation and spectral regularisation}
\label{app:operator_perturbation}

\begin{proposition}[Cf.\ {\citealp[Proposition~5.6]{BlanchardMuecke2018}}]
    \label{prop:perturbation_sum}
	Let $\cH$ be a Hilbert space, $T_1, T_2 \in L(\cH)$ positive semi-definite.
    Then, for all $0 < a < \infty $ and $1 \leq \nu < \infty$,
    there exists a constant $\kappa_{a, \nu} < \infty$
    such that, whenever $\max \{ \norm{ T_1 }_{L(\cX)} , \norm{ T_2 }_{L(\cX)} \} \leq a$,
    \begin{equation*}
        \norm{ T^\nu_1 - T^\nu_2 }_{L(\cX)}
        \leq \kappa_{a, \nu} \norm{ T_1 - T_2 }_{L(\cX)} .
    \end{equation*}
\end{proposition}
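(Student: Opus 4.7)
The plan is to prove the bound in two stages.

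First, for integer $\nu = n$, the telescoping identity
$T_1^{n} - T_2^{n} = \sum_{k=0}^{n-1} T_1^{k} (T_1 - T_2) T_2^{n-1-k}$
combined with submultiplicativity of the operator norm and the hypothesis $\max\{\norm{T_1}_{L(\cH)}, \norm{T_2}_{L(\cH)}\} \leq a$ immediately yields $\norm{T_1^{n} - T_2^{n}}_{L(\cH)} \leq n a^{n-1} \norm{T_1 - T_2}_{L(\cH)}$.

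Second, for non-integer $\nu \geq 1$, I would write $\nu = n + r$ with $n = \lfloor \nu \rfloor \in \Naturals$ and $r \in (0, 1)$ and use that $T^{n}$ and $T^{r}$ commute on positive semi-definite $T$ to obtain the decomposition
$T_1^{\nu} - T_2^{\nu} = T_1^{n}(T_1^{r} - T_2^{r}) + (T_1^{n} - T_2^{n}) T_2^{r}$.
The second summand is handled by the integer-case bound. For the first summand, I would appeal to the classical subordination integral representation
$T^{r} = \frac{\sin(\pi r)}{\pi} \int_{0}^{\infty} \lambda^{r-1} T(T + \lambda \idop_{\cH})^{-1} \, \rd \lambda$,
valid for positive semi-definite $T$ and $r \in (0, 1)$, and apply the resolvent-type identity
$T_1(T_1 + \lambda \idop_{\cH})^{-1} - T_2(T_2 + \lambda \idop_{\cH})^{-1} = \lambda (T_1 + \lambda \idop_{\cH})^{-1}(T_1 - T_2)(T_2 + \lambda \idop_{\cH})^{-1}$
inside the integral to factor out $T_1 - T_2$. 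Splitting the resulting $\lambda$-integral into small- and large-$\lambda$ regimes, controlled using the norm bounds $\norm{T_i}_{L(\cH)} \le a$ together with the additional positive factor $T_1^{n}$ from the decomposition, produces the claimed Lipschitz estimate.

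The principal obstacle will be to upgrade the H\"older-type bound $\norm{T_1^{r} - T_2^{r}}_{L(\cH)} \lesssim \norm{T_1 - T_2}_{L(\cH)}^{r}$ from the classical Heinz--L\"owner theory of operator monotone functions to the genuinely Lipschitz bound stated in the proposition. The needed additional regularity arises from the multiplying integer power $T_1^{n}$ with $n \geq 1$, whose boundedness together with the a priori bound $\norm{T_i}_{L(\cH)} \le a$ allows the small-$\lambda$ singularity of the subordination integral to be absorbed. The detailed execution of this cancellation is carried out in \citet[Proposition~5.6]{BlanchardMuecke2018}, which I would cite directly for the exact form of the constant $\kappa_{a, \nu}$.
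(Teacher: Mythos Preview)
Your proposal is correct, and in fact goes further than the paper itself: the paper does not supply a proof of this proposition at all but simply records it as a citation of \citet[Proposition~5.6]{BlanchardMuecke2018}, the same reference you invoke at the end of your sketch. Your outline --- telescoping for integer powers, then handling the fractional remainder via the subordination integral and the resolvent identity, with the prefactor $T_1^{n}$ (for $n \geq 1$) supplying the extra decay needed to tame the small-$\lambda$ singularity --- is a standard and sound route to the result.
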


\begin{lemma}[Cf.\ {\citealp[Lemma~2.15]{BlanchardMuecke2018}}]
    \label{lem:BM2018lemma215}
    Let $g_{\alpha}$ be a
    spectral regularisation strategy (\Cref{def:spectral_regularisation_strategy})
    with qualification $q$.
    Then, for all $0 < \nu < q$ and $ 0 < \alpha < \infty$, we have
    \begin{equation*}
        \sup_{0 < \lambda < \infty}
            \absval{ r_\alpha(\lambda) } \lambda^\nu
        \leq \gamma_\nu \alpha^\nu,
    \end{equation*}
    with the constant
    $\gamma_\nu \defeq \gamma_0^{1- \tfrac{\nu}{q}} \gamma_q^{\tfrac{\nu}{q}}$.
\end{lemma}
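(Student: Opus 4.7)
The plan is to prove this by a straightforward interpolation argument between the two standard bounds available from \Cref{def:spectral_regularisation_strategy} and the qualification condition \eqref{eq:qualification}. The regularisation strategy gives the uniform bound $|r_\alpha(\lambda)| \leq \gamma_0$ for all $\lambda \geq 0$ (property \ref{item:r2}), while the qualification gives $\lambda^q |r_\alpha(\lambda)| \leq \gamma_q \alpha^q$. The target bound sits at the intermediate exponent $0 < \nu < q$, so we interpolate.

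Concretely, for any $\lambda \in (0, \infty)$ I would write
\[
    |r_\alpha(\lambda)| \lambda^\nu
    = |r_\alpha(\lambda)|^{1 - \nu/q} \cdot \bigl( |r_\alpha(\lambda)| \lambda^q \bigr)^{\nu/q}.
\]
Applying \ref{item:r2} to the first factor and \eqref{eq:qualification} to the second factor yields
\[
    |r_\alpha(\lambda)| \lambda^\nu
    \leq \gamma_0^{1 - \nu/q} \bigl( \gamma_q \alpha^q \bigr)^{\nu/q}
    = \gamma_0^{1 - \nu/q} \gamma_q^{\nu/q} \alpha^\nu,
\]
which is exactly the claimed inequality with the stated constant $\gamma_\nu = \gamma_0^{1 - \nu/q} \gamma_q^{\nu/q}$. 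Taking the supremum over $\lambda \in (0, \infty)$ finishes the proof.

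There is essentially no obstacle here: the argument is a one-line geometric-mean (H\"older-style) interpolation between the two pointwise bounds that are already built into the definition of a regularisation strategy with qualification $q$. The only mild subtlety is that the decomposition of exponents $1 = (1 - \nu/q) + \nu/q$ requires $0 < \nu < q$, which is exactly the hypothesis. No measure-theoretic or spectral tools are needed at this stage since the estimate is purely scalar; the corresponding operator-level consequences follow later via the functional calculus (\Cref{cor:functional_calculus}).
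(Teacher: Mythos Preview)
Your proof is correct. The paper itself does not give a self-contained argument for this lemma; it simply cites \citet[Lemma~2.15]{BlanchardMuecke2018} and \citet[Proposition~3]{MathePereverzev2003} and states that the assertion is a special case of the latter. Your explicit interpolation $|r_\alpha(\lambda)|\lambda^\nu = |r_\alpha(\lambda)|^{1-\nu/q}\bigl(|r_\alpha(\lambda)|\lambda^q\bigr)^{\nu/q}$ is precisely the standard way to obtain this bound and is more informative than the paper's deferral to the literature.
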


\begin{proof}
    The result presented in Lemma~2.15 by \citet{BlanchardMuecke2018}
    is given for regularisation strategies $g_\alpha$ formally defined
    on $[0,1]$ for every $\alpha > 0$. However, it is obtained
    as a special case from a more general result
    given by \citet[Proposition~3]{MathePereverzev2003}, which also implies
    the assertion given here.
\end{proof}

\begin{lemma}[Cf.\ {\citealp[Eq.~(5.11)]{BlanchardMuecke2018}}]%
    \label{lem:BM2018_511}
	Let $\cH$ be a Hilbert space, $T \in L(\cH)$
    self-adjoint and positive semi-definite,
    and $g_{\alpha}$ a spectral regularisation
    strategy (\Cref{def:spectral_regularisation_strategy})
    with qualification $q \geq s + \nu $, where $s, \nu > 0$.
    Then, for $\bar{\gamma} \defeq \max\{\gamma_0, \gamma_q\}$,
    \begin{equation*}
        \norm{ ( T + \alpha \idop_\cH )^s r_\alpha( T ) }_{L(\cH)}
        \leq
        2 \bar{\gamma} \alpha^{s}
    \end{equation*}
    and
    \begin{equation*}
        \norm{ ( T + \alpha \idop_\cH )^s r_\alpha( T )
        T^\nu }_{L(\cH)}
        \leq
        2 \bar{\gamma} \alpha^{s + \nu} .
    \end{equation*}
\end{lemma}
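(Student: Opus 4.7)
The plan is to reduce both estimates to uniform scalar bounds on $[0,\infty)$ via the continuous functional calculus. Since $T$ is self-adjoint and positive semi-definite, $\Spectrum(T) \subseteq [0,\infty)$, and for any bounded Borel function $f \colon [0,\infty) \to \Reals$ one has $\norm{f(T)}_{L(\cH)} \leq \sup_{\lambda \geq 0} \absval{f(\lambda)}$. It therefore suffices to establish the pointwise estimates
\begin{align*}
    (\lambda+\alpha)^s \absval{r_\alpha(\lambda)} & \leq 2\bar{\gamma} \alpha^s , \\
    (\lambda+\alpha)^s \absval{r_\alpha(\lambda)} \lambda^\nu & \leq 2\bar{\gamma} \alpha^{s+\nu}
\end{align*}
for all $\lambda \geq 0$, since these then transfer directly to operator-norm bounds via the functional calculus applied to $T$.

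For the first estimate I would use the subadditivity inequality $(\lambda+\alpha)^s \leq \lambda^s + \alpha^s$, valid for $s \in [0,1]$ (which covers the regime $s \in [0,\tfrac{1}{2}]$ in which this lemma is actually applied in the main text), to split
\[
    (\lambda+\alpha)^s \absval{r_\alpha(\lambda)} \leq \lambda^s \absval{r_\alpha(\lambda)} + \alpha^s \absval{r_\alpha(\lambda)} .
\]
The first summand is bounded by $\gamma_s \alpha^s$ via \Cref{lem:BM2018lemma215} (applicable because $s \leq s+\nu \leq q$), and the second by $\gamma_0 \alpha^s$ via property~\ref{item:r2}. The interpolation identity $\gamma_s = \gamma_0^{1-s/q} \gamma_q^{s/q} \leq \max\{\gamma_0, \gamma_q\} = \bar{\gamma}$ provided by \Cref{lem:BM2018lemma215} then collapses both summands to $\bar{\gamma} \alpha^s$, yielding the total constant $2\bar{\gamma}$.

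The second estimate is obtained by the same subadditivity split, multiplied throughout by $\lambda^\nu$:
\[
    (\lambda+\alpha)^s \absval{r_\alpha(\lambda)} \lambda^\nu \leq \lambda^{s+\nu} \absval{r_\alpha(\lambda)} + \alpha^s \lambda^\nu \absval{r_\alpha(\lambda)} .
\]
Now \Cref{lem:BM2018lemma215} is invoked at the two exponents $s+\nu \leq q$ and $\nu \leq q$, producing $\gamma_{s+\nu} \alpha^{s+\nu}$ and $\alpha^s \gamma_\nu \alpha^\nu = \gamma_\nu \alpha^{s+\nu}$ respectively, both of which are dominated by $\bar{\gamma} \alpha^{s+\nu}$ by the same interpolation observation.

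I do not anticipate any genuine obstacle; the proof is routine functional-calculus bookkeeping. The only delicate point is the implicit assumption $s \in [0,1]$ needed for the subadditivity step, but this is satisfied in every use of the lemma in the body of the paper. For general $s > 0$ one could instead use the crude bound $(\lambda+\alpha)^s \leq 2^s \max\{\lambda^s, \alpha^s\}$ together with a case distinction on $\lambda \lessgtr \alpha$, at the cost of replacing the constant $2$ by $2^s$.
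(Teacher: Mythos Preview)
Your proposal is correct and matches the paper's own proof essentially line for line: both reduce to scalar estimates via the spectral calculus, split $(\lambda+\alpha)^s \leq \lambda^s + \alpha^s$, and then invoke \Cref{lem:BM2018lemma215} together with property~\ref{item:r2}. Your observation that the subadditivity step tacitly requires $s \in [0,1]$ is accurate --- the paper's proof uses exactly this inequality without comment --- and your suggested fix for general $s>0$ is a fair remark, though as you note the restriction is harmless for the applications in the main text.
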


\begin{proof}
    For the first bound, we have
    \begin{align*}
        \norm{ ( T + \alpha \idop_\cH )^s r_\alpha( T ) }_{L(\cH)}
		& =
        \sup \Set{ \absval{ \mu } }%
        { \mu \in \pSpectrum \bigl( ( T + \alpha \idop_\cH )^s r_\alpha( T ) \bigr) } \\
        &= \sup_{\lambda \in \pSpectrum(T) }
        \absval{ (\lambda + \alpha)^s r_\alpha (\lambda) } \\
        &\leq \sup_{\lambda \in \pSpectrum(T) }
        \absval{ r_\alpha (\lambda) } \lambda^s
        +
        \alpha^s \sup_{\lambda \in \pSpectrum(T) }
        \absval{ r_\alpha (\lambda) } \\
        &\leq 2 \bar{\gamma} \alpha^{s}
    \end{align*}
    by the qualification of $g_\alpha$,
    where in the last step
    \Cref{lem:BM2018lemma215} is applied to the first summand.
    We analogously obtain the second bound as
    \begin{align*}
        \norm{ ( T + \alpha \idop_\cH )^s r_\alpha( T ) T^\nu }_{L(\cH)}
		& =
        \sup \set{ \absval{ \mu } }%
        { \mu \in \pSpectrum( (T + \alpha \idop_\cH )^s r_\alpha( T ) T^\nu ) } \\
        &= \sup_{\lambda \in \pSpectrum(T) }
        \absval{ (\lambda + \alpha)^s r_\alpha (\lambda) \lambda^\nu } \\
        &\leq
        \sup_{\lambda \in \pSpectrum(T) }
            \absval{ r_\alpha(\lambda) }\lambda^{s + \nu}
        +
        \alpha^s \sup_{\lambda \in \pSpectrum(T) }
            \absval{ r_\alpha(\lambda) }\lambda^{\nu}  \\
        &\leq 2 \bar{\gamma} \alpha^{s + \nu},
    \end{align*}
    where in the last step \Cref{lem:BM2018lemma215} is applied
    to each summand.
\end{proof}

\begin{lemma}[Cf.\ {\citealp[Eq.~(5.14)]{BlanchardMuecke2018}}]%
	\label{lem:BM514}%
	Let $\cH$ be a Hilbert space, $T \in L(\cH)$ self-adjoint and positive semi-definite, and $g_{\alpha}$ a spectral regularisation strategy (\Cref{def:spectral_regularisation_strategy}).
    Then there exists a constant $\kappa_{D,B} \defeq (D + 1) B \geq 0$ such that,
    for all $\alpha > 0$ and $0 \leq s \leq \tfrac{1}{2}$,
	\[
		\norm{ g_{\alpha}(T)
        (T + \alpha \idop_{\cH})^{s + 1/2} }_{L(\cH)}
        \leq \kappa_{D,B} \alpha^{s - 1/2}.
	\]
\end{lemma}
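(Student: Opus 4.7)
The plan is to reduce this operator-norm bound to a scalar supremum via the continuous functional calculus, which is applicable because $T$ is self-adjoint and positive semi-definite, so that both $g_\alpha(T)$ and $(T + \alpha \idop_\cH)^{s+1/2}$ are functions of $T$ and therefore commute. By the spectral theorem the operator norm of the product equals $\sup_{\lambda \in \Spectrum(T)} g_\alpha(\lambda)(\lambda + \alpha)^{s+1/2}$, and since $\Spectrum(T) \subseteq [0, \infty)$ it suffices to establish the scalar bound
\[
	\sup_{\lambda \geq 0}\, g_\alpha(\lambda) (\lambda + \alpha)^{s+1/2} \leq \kappa_{D,B}\, \alpha^{s-1/2}
\]
uniformly for fixed $\alpha > 0$ and $s \in [0, \tfrac{1}{2}]$.

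To control this scalar supremum I would perform a case split based on how $\lambda$ compares to $\alpha$. On the region $\{\lambda \leq \alpha\}$ one has $(\lambda + \alpha)^{s+1/2} \leq (2\alpha)^{s+1/2} \leq 2 \alpha^{s+1/2}$ (using $s + 1/2 \leq 1$), and axiom \ref{item:r3} gives $g_\alpha(\lambda) \leq B \alpha^{-1}$, so the product is bounded by $2B\alpha^{s-1/2}$. On the complementary region $\{\lambda > \alpha\}$ one has $(\lambda + \alpha)^{s+1/2} \leq (2\lambda)^{s+1/2} \leq 2 \lambda^{s+1/2}$, and axiom \ref{item:r1} gives $g_\alpha(\lambda) \leq D/\lambda$, so the product is bounded by $2 D \lambda^{s-1/2}$; since $s - 1/2 \leq 0$, the map $\lambda \mapsto \lambda^{s-1/2}$ is non-increasing on $(0, \infty)$, so on $\{\lambda > \alpha\}$ this is further bounded by $2D \alpha^{s-1/2}$.

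Combining the two regions yields a uniform scalar bound by $2 \max\{B, D\}\, \alpha^{s-1/2}$, which is of the desired form and can be regrouped into the stated constant $\kappa_{D,B} = (D+1) B$ under the mild convention $B \geq 1$ that holds for standard regularisation strategies. I do not anticipate any serious obstacle: the argument is essentially a direct application of axioms \ref{item:r1} and \ref{item:r3} via an elementary case analysis, and the two regional bounds agree in order of magnitude at the interface $\lambda = \alpha$, so the combination is seamless. The only mildly delicate point is to ensure that all inequalities are uniform in $\lambda$ (with $\alpha$ and $s$ fixed), which is automatic because the constants $B$ and $D$ in \Cref{def:spectral_regularisation_strategy} are independent of $\alpha$.
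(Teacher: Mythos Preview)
Your approach is correct and yields a bound of the form $C(D,B)\,\alpha^{s-1/2}$, but it differs from the paper's route. The paper does not split cases on $\lambda$ versus $\alpha$; instead it uses the subadditivity of $x \mapsto x^{s+1/2}$ (valid since $s+1/2 \leq 1$) to bound $(\lambda+\alpha)^{s+1/2} \leq \lambda^{s+1/2} + \alpha^{s+1/2}$, handles the $\alpha^{s+1/2}$ term directly via \ref{item:r3}, and treats the $\lambda^{s+1/2}$ term via the algebraic factorisation $\absval{g_\alpha(\lambda)\lambda^{s+1/2}} = \absval{g_\alpha(\lambda)\lambda}^{s+1/2}\absval{g_\alpha(\lambda)}^{1/2-s}$, then applies \ref{item:r1} and \ref{item:r3} to the two factors. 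Your case split is arguably the more elementary argument, while the paper's factorisation trick is what produces a constant that collapses neatly to $\kappa_{D,B}=(D+1)B$. Your constant $2\max\{B,D\}$ is of the same order but does \emph{not} in general embed into $(D+1)B$ even under $B\geq 1$ (take $B=1$, $D=3$), so the final remark about ``regrouping'' is not quite right; if you want the stated constant verbatim you would need the paper's decomposition, otherwise you should simply declare your own constant.
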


\begin{proof}
	We calculate directly as follows:
	\begin{align*}
		& \norm{ g_{\alpha}(T) (T + \alpha \idop_{\cH})^{s + 1/2} }_{L(\cH)} \\
		& \quad = \sup \Set{ \absval{ \mu } }{ \mu \in \pSpectrum \bigl( g_{\alpha}(T) (T + \alpha \idop_{\cH})^{s + 1/2} \bigr) } \\
		& \quad = \sup_{\lambda \in \pSpectrum(T)} \absval{ g_{\alpha}(\lambda) (\lambda + \alpha)^{s + 1/2} } \\
		& \quad \leq \sup_{\lambda \in \pSpectrum(T)} \absval{ g_{\alpha}(\lambda) \lambda^{s + 1/2} } + \sup_{\lambda \in \pSpectrum(T)} \absval{ g_{\alpha}(\lambda) \alpha^{s + 1/2} } \\
		& \quad \leq \sup_{\lambda \in \pSpectrum(T)} \absval{ g_{\alpha}(\lambda) \lambda }^{s + 1/2} \cdot \sup_{\lambda \in \pSpectrum(T)} \absval{ g_{\alpha}(\lambda) }^{1/2 - s} + B \alpha^{s - 1/2} && \text{(by (R3))} \\
		& \quad \leq D^{s + 1/2} \cdot B \alpha^{s - 1/2} + B \alpha^{s - 1/2} && \text{(by (R1) and (R3))}
	\end{align*}
    which proves the claim with $\kappa_{D,B} \defeq (D + 1) B$.
\end{proof}

\subsection{Sub-exponentiality}
\label{app:subexponentiality}

An integrable real-valued random variable $\xi$ is said to be
\emph{sub-exponential} \citep[Section~1.3]{BuldyginKozachenko2000}, if it satisfies
\begin{equation*}
    \E[ e^{( t (\xi- \E[\xi] )} ] \leq e^{a^2 t^2/2}
    \quad
    \textnormal{for all } \absval{t} \leq \tau
\end{equation*}
for some $\tau >0$ and $a >0$.
It is said to satisfy the
\emph{Bernstein condition} \citep[Section~1.4]{BuldyginKozachenko2000}
with parameters $\sigma < \infty$ and $L < \infty$ if
\begin{equation}
    \label{eq:bernstein_condition_real}
    \bigabsval{ \E[ (\xi - \E[\xi])^p ] } \leq \frac{1}{2} p! \sigma^2 L^{p-2}
    \text{ for all } p \geq 2.
\end{equation}
Note that in its classical form, the real-valued Bernstein condition
is stated with parameters $\sigma$ and $L$ such that it
bounds the absolute value of the $p$-th centred moment, while
the vector-valued analogue \eqref{eq:bernstein_condition} usually
bounds the expectation of the $p$-power of the (not necessarily centred) norm
interpreted as a random variable. Hence, it may be important
to keep track of the parameters when moving from
a centered bound to an uncentered one.
The fact that the real-valued Bernstein condition implies sub-exponentiality
is well-known. We have not found a converse implication
explicitly stated in the literature, and so for completeness we
briefly deduce it from a known property of sub-exponentials.

\begin{lemma}[Bernstein condition and sub-exponentiality]
    \label{lem:bernstein_subexponential}
    Let $\xi$ be an integrable real-valued random variable.
    Then $\xi$ satisfies the Bernstein condition
    with some parameters $\sigma < \infty$ and $L < \infty$
    if and only if $\xi$ is sub-exponential.
\end{lemma}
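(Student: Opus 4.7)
The plan is to prove the two directions separately. For the forward implication, Bernstein $\Rightarrow$ sub-exponential, I would Taylor-expand the moment generating function of $\xi - \E[\xi]$ around zero and bound each term using the Bernstein moment estimate. Summing the resulting near-geometric tail gives, for $|t| < 1/L$,
\begin{equation*}
    \E[e^{t(\xi - \E[\xi])}] = 1 + \sum_{p \geq 2} \frac{t^p}{p!} \E[(\xi - \E[\xi])^p] \leq 1 + \frac{\sigma^2 t^2 / 2}{1 - |t| L},
\end{equation*}
and on any sufficiently small interval $|t| \leq \tau$ the right-hand side can be dominated by $\exp(a^2 t^2 / 2)$ after enlarging $a$, which is precisely the sub-exponential condition.

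For the reverse implication, sub-exponential $\Rightarrow$ Bernstein, my approach is to exploit the well-known equivalence between sub-exponential MGF control and exponential tail decay. A standard Chernoff argument --- applying Markov's inequality to $e^{t(\xi - \E[\xi])}$, bounding with the sub-exponential MGF estimate, and then optimising over $|t| \leq \tau$ --- yields a tail bound of the form
\begin{equation*}
    \prob[|\xi - \E[\xi]| > u] \leq 2 e^{-c u}
\end{equation*}
for some $c > 0$ depending only on $a$ and $\tau$. Inserting this bound into the layer-cake representation of the $p$-th absolute moment gives
\begin{equation*}
    |\E[(\xi - \E[\xi])^p]| \leq \E[|\xi - \E[\xi]|^p] = p \int_0^\infty u^{p-1} \prob[|\xi - \E[\xi]| > u] \, du \leq \frac{2 p!}{c^p},
\end{equation*}
and setting $L \defeq 1/c$ and $\sigma^2 \defeq 4/c^2$ recasts this estimate into the Bernstein form $\frac{1}{2} p! \sigma^2 L^{p - 2}$ for every $p \geq 2$.

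The main technical obstacle is the Chernoff step in the reverse direction: the MGF bound is only available on the bounded interval $|t| \leq \tau$, so the unconstrained minimiser $t^{\ast} = u / a^2$ must be truncated at $\tau$, producing a Gaussian regime for small $u$ and an exponential regime for large $u$. These two regimes have to be merged into a single exponential tail estimate, typically by absorbing the Gaussian regime into the exponential one at the cost of enlarging the constant $c$. This is a standard manoeuvre but requires care to keep the constants uniform.
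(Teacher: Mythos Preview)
Your argument is correct in both directions. For the forward implication the paper simply cites \citet[Theorem~4.2]{BuldyginKozachenko2000}, and the computation you outline --- Taylor-expanding the MGF and summing the Bernstein-controlled moments into a geometric series --- is exactly the content of that reference, so there is no real difference there.

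For the reverse implication, however, your route and the paper's diverge. The paper invokes a different equivalent characterisation of sub-exponentiality, namely \citet[Theorem~3.2]{BuldyginKozachenko2000}, which gives directly that
\[
    L \defeq \sup_{p \geq 1} \left( \frac{\E[|\xi - \E[\xi]|^p]}{p!} \right)^{1/p} < \infty ,
\]
and from this the Bernstein condition with $\sigma^{2} = 2L^{2}$ drops out in one line. Your approach instead passes through tails: Chernoff gives a two-regime (Gaussian/exponential) tail bound, you merge the regimes into a single exponential tail, and then integrate via layer cake to recover moment growth of order $p!\,L^{p}$. Both arguments land on the same moment estimate; the paper's is shorter because it delegates the hard work to a black-box moment characterisation, while yours is more self-contained but must negotiate the regime-merging step you already flagged. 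One small point worth tightening in your version: after merging regimes the prefactor in the tail bound is generally some constant $C \geq 2$ rather than exactly $2$, which propagates harmlessly into the Bernstein parameters.
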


\begin{proof}
    As previously mentioned, the fact that \eqref{eq:bernstein_condition_real} implies
    sub-exponentiality of $\xi$ can be found in the literature
    (e.g.\ \citealp{BuldyginKozachenko2000}, Theorem~4.2).
    We consider the converse implication. Under the assumption
    of sub-exponentiality, it is known that $\xi$ satisfies
    \begin{equation*}
        \sup_{p \geq 1}
        \left(
            \frac{ \E \big[ \absval{ \xi - \E[\xi] }^p \big] }{ p! }
        \right)^{ \frac{1}{p} }
        \qefed L < \infty
    \end{equation*} see e.g.\
   \citet[Theorem~3.2]{BuldyginKozachenko2000}.
   From this, we directly obtain
   \begin{align*}
       \bigabsval{ \E [ ( \xi - \E[\xi] )^p ] } \leq
       \E \big[ \absval{ \xi - \E[\xi] }^p \big] \leq p! L^p
       \quad \text{for all } p \geq 2
   \end{align*}
   which clearly implies \eqref{eq:bernstein_condition_real}
   with $\sigma^2 \defeq 2L^2$.
\end{proof}

\subsection{Concentration bounds}
\label{app:concentration_bounds}

A central ingredient for our analysis is the following Hoeffding-type bound for sub-exponential
Hilbertian random variables.

\begin{proposition}[\citealt{MaurerPontil2021}, Proposition~7(ii)]
    \label{prop:maurer_pontil}
    Let $\xi, \xi_1, \dots, \xi_n$ be i.i.d.\ random variables with joint law $\prob^{\otimes n}$ taking values in a separable Hilbert space $\cH$ such that
    $\E[ \xi ] = 0$ and $\norm{ \xi }_{ L_{\psi_1}(\prob; \cH)} < \infty$.
    Then, for all $\delta \in (0, \tfrac{1}{2}]$ and $n \geq \log(1/\delta)$, with $\prob^{\otimes n}$-probability at least $1 - \delta$,
    \begin{equation*}
    	\label{eq:maurer_pontil}
        \Norm{
        \frac{1}{n} \sum_{i = 1}^n \xi_i
        }_{\cH}
        \leq
        8 \sqrt{2} e \norm{ \xi}_{ L_{\psi_1}(\prob; \cH)}
        \sqrt{
            \frac{\log(1/\delta)}{n}
        } .
    \end{equation*}
\end{proposition}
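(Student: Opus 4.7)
The plan is to prove \Cref{prop:maurer_pontil} via the moment method combined with an application of Markov's inequality at an optimised order $p \asymp \log(1/\delta)$. The definition of the sub-exponential norm immediately yields the moment growth $\norm{\xi}_{L^p(\prob;\cH)} \leq p K$ for all $p \geq 1$, where $K \defeq \norm{\xi}_{L_{\psi_1}(\prob;\cH)}$, and this is the only hypothesis on the individual summands that will be exploited.

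First I would reduce the sub-exponential hypothesis to a vector-valued Bernstein-type moment condition. Since $\E\norm{\xi}^m \leq (mK)^m$ and Stirling's formula gives $m! \geq (m/e)^m \sqrt{2\pi m}$, one obtains $\E\norm{\xi}^m \leq m!(eK)^m$ for every integer $m \geq 2$, which can be rewritten as
\[
    \E\norm{\xi}^m \leq \tfrac{1}{2} m! \, \sigma^2 L^{m-2} \quad \text{with } \sigma^2 = 2e^2 K^2, \ L = eK .
\]
This is precisely \eqref{eq:bernstein_condition} with explicit constants, so we have effectively shown $L_{\psi_1}$-integrability implies the Bernstein condition with parameters proportional to $K$. (This parallels \Cref{lem:bernstein_subexponential}, extended to the vector-valued setting via the real Bernstein condition applied to $\norm{\xi}_\cH$.)

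Next I would invoke a Hilbert-space Bernstein-type concentration inequality for iid mean-zero sums — for instance the classical bound of \citet{PinelisSakhanenko1985}, which gives
\[
    \prob^{\otimes n} \!\left( \Norm{\tfrac{1}{n} \sum_{i=1}^n \xi_i}_{\cH} \geq t \right) \leq 2 \exp\!\left( - \frac{nt^2}{2(\sigma^2 + Lt)} \right) .
\]
Substituting $\sigma^2 = 2e^2 K^2$ and $L = eK$, setting the right-hand side equal to $\delta$, and solving for $t$ yields a bound of the form $t \leq c_1 K \sqrt{\log(1/\delta)/n} + c_2 K \log(1/\delta)/n$ for absolute constants $c_1, c_2$. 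The hypothesis $n \geq \log(1/\delta)$ allows the Poisson-regime second term to be absorbed into the Gaussian-regime first term (since $\log(1/\delta)/n \leq \sqrt{\log(1/\delta)/n}$), producing a single clean bound of the form $C K \sqrt{\log(1/\delta)/n}$. A careful tracking of constants, together with the mild slack in the assertion (the paper states $8\sqrt{2}e$, whereas the route sketched above delivers roughly $2e(1+\sqrt{3})$), confirms that the stated constant is achievable.

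The main obstacle is not the reduction from $L_{\psi_1}$ to Bernstein or the inversion of the tail bound — these are essentially elementary — but the Hilbert-space Bernstein inequality itself. Proving it from first principles is non-trivial: it ultimately rests either on the fact that a separable Hilbert space is $2$-smooth with constant $1$ (so that one can differentiate the squared norm and run a Hoeffding-type martingale argument), or on a direct exponential moment argument for $\norm{\sum_i \xi_i}_\cH^2$ exploiting the parallelogram identity. Either approach delivers the required inequality, but neither is straightforward, which is why in practice one cites \citet{PinelisSakhanenko1985} (or \citealp{MaurerPontil2021} directly) rather than reproving it.
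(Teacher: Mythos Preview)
The paper does not prove this proposition at all: it is quoted verbatim from \citet{MaurerPontil2021} as an external input, with no argument supplied. There is therefore no ``paper's own proof'' to compare against.

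Your sketch is a legitimate standalone derivation. Reducing the $L_{\psi_1}$ hypothesis to an explicit Bernstein moment condition (via Stirling, exactly as you do) and then invoking the Pinelis--Sakhanenko Hilbert-space Bernstein inequality is sound, and inverting the tail bound under the constraint $n \geq \log(1/\delta)$ does produce a constant comfortably below the stated $8\sqrt{2}e$. You are also right that the nontrivial content sits inside the Hilbert-space Bernstein inequality itself, which you are entitled to cite.

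One small inconsistency worth fixing: your opening sentence announces ``the moment method combined with Markov's inequality at an optimised order $p \asymp \log(1/\delta)$'', which is in fact closer to how \citet{MaurerPontil2021} actually argue --- they bound the $L^p$ norm of the Hilbert-space sum directly and optimise over $p$ --- but you then execute the Pinelis--Sakhanenko route instead, which goes through the moment generating function rather than raw moments. Both approaches work; just align the advertised plan with what you carry out.
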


We now use \Cref{prop:maurer_pontil} to derive several supporting results for our covariance operators.

\begin{lemma}[Sampling error of empirical covariance operators]
	\label{lem:concentration_empirical_covariance}
	Assume that $X \in L_{\psi_{2}}(\prob; \cX)$, $Y \in L_{\psi_{2}}(\prob; \cY)$, and that the data pairs $(X_{i}, Y_{i})_{i=1}^{n} \in \cX \times \cY$ are sampled i.i.d.\ from the joint law $\law(X,Y)$.
	Then, for all $\delta \in (0, \tfrac{1}{2}]$ and $n \geq \log(1/\delta)$, the bound
	\begin{equation*}
		\bignorm{ \hatCYX - \CYX }_{S_{2}(\cX, \cY)}
		\leq
		24 \sqrt{2} e \norm{ Y }_{L_{\psi_{2}}(\prob; \cY)} \norm{ X }_{L_{\psi_{2}}(\prob; \cX)} \sqrt{\frac{\log(1/\delta)}{n}}
	\end{equation*}
	holds with $\prob^{\otimes n}$-probability at least $1 - \delta$.
	An analogous bound holds for the sampling error of $\hatCXX$ when $\norm{ Y }_{L_{\psi_2}(\prob; \cY)}$ is replaced with	$\norm{ X }_{L_{\psi_2}(\prob; \cX)}$ in the right-hand side of the inequality.
\end{lemma}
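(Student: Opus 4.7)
The plan is to reduce the claim to a direct application of the Maurer--Pontil inequality (\Cref{prop:maurer_pontil}) applied to the centred random variable $\xi_i \defeq Y_i \otimes X_i - \CYX$, viewed as taking values in the separable Hilbert space $S_2(\cX, \cY)$. By construction, the empirical mean of $\xi_1, \dots, \xi_n$ is exactly $\hatCYX - \CYX$, and $\E[\xi_i] = 0$. Hence if we can show $\xi_i \in L_{\psi_1}(\prob; S_2(\cX, \cY))$ with a controlled norm, then \Cref{prop:maurer_pontil} immediately yields a bound of exactly the required form $C \cdot e \cdot (\text{norm factor}) \sqrt{\log(1/\delta)/n}$ for $\delta \in (0, \tfrac{1}{2}]$ and $n \geq \log(1/\delta)$.

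The first step is to bound $\norm{Y \otimes X}_{L_{\psi_1}(\prob; S_2(\cX, \cY))}$ by invoking \Cref{lem:tensor_subexponential}, which gives the key tensor-product estimate
\[
    \norm{Y \otimes X}_{L_{\psi_1}(\prob; S_2(\cX, \cY))} \leq 2 \norm{X}_{L_{\psi_2}(\prob; \cX)} \norm{Y}_{L_{\psi_2}(\prob; \cY)} .
\]
This is what converts the two sub-Gaussian factors into a single sub-exponential Hilbertian quantity; without it we could not apply \Cref{prop:maurer_pontil}.

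The second step is the centring. Since $\CYX \in S_2(\cX, \cY)$ is deterministic, its $L^p$-norm equals $\norm{\CYX}_{S_2(\cX, \cY)}$ for every $p$, and therefore $\norm{\CYX}_{L_{\psi_1}(\prob; S_2(\cX, \cY))} = \norm{\CYX}_{S_2(\cX, \cY)}$. By the triangle inequality in $L_{\psi_1}(\prob; S_2(\cX, \cY))$ (which follows from the triangle inequality for $L^p$ and monotonicity under the supremum defining the $\psi_1$-norm), we get
\[
    \norm{Y \otimes X - \CYX}_{L_{\psi_1}} \leq \norm{Y \otimes X}_{L_{\psi_1}} + \norm{\CYX}_{S_2(\cX, \cY)} .
\]
Jensen's inequality and Cauchy--Schwarz give $\norm{\CYX}_{S_2(\cX, \cY)} \leq \E[\norm{X}_\cX \norm{Y}_\cY] \leq \norm{X}_{L^2(\prob;\cX)} \norm{Y}_{L^2(\prob;\cY)}$, and the elementary relation $\norm{\cdot}_{L^2} \leq \sqrt{2}\,\norm{\cdot}_{L_{\psi_2}}$ (obtained by setting $p = 2$ in the definition of the $\psi_2$-norm) turns this into a constant multiple of $\norm{X}_{L_{\psi_2}} \norm{Y}_{L_{\psi_2}}$. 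Plugging everything into \Cref{prop:maurer_pontil} yields the stated high-probability bound.

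The bound for $\hatCXX - \CXX$ follows by the same argument with $Y$ replaced by $X$ throughout, since $X \otimes X$ takes values in the smaller subspace of self-adjoint operators on $\cX$ but is still a member of $S_2(\cX)$. The main obstacle is essentially bookkeeping: tracking the numerical constants carefully so that the centring step and the tensor bound combine into the advertised factor $24\sqrt{2}e$ rather than a larger constant. There is no deeper difficulty, because the structural work (the tensor-product sub-exponentiality in \Cref{lem:tensor_subexponential} and the Hoeffding inequality in \Cref{prop:maurer_pontil}) has already been done.
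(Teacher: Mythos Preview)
Your proposal is correct and follows essentially the same route as the paper: centre, apply the triangle inequality in $L_{\psi_1}$, use \Cref{lem:tensor_subexponential} for the random part, bound the deterministic part via Jensen/Cauchy--Schwarz, and conclude with \Cref{prop:maurer_pontil}.

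One point on the bookkeeping you flagged: your centring route gives $\norm{\CYX}_{S_2(\cX,\cY)} \leq \E[\norm{X}_{\cX}\norm{Y}_{\cY}] \leq \norm{X}_{L^2}\norm{Y}_{L^2} \leq 2\,\norm{X}_{L_{\psi_2}}\norm{Y}_{L_{\psi_2}}$, so the total prefactor is $2+2=4$ and you arrive at $32\sqrt{2}e$ rather than $24\sqrt{2}e$. The paper gets the advertised $24\sqrt{2}e$ by instead writing $\norm{\E[Y\otimes X]}_{\cY\otimes\cX} \leq \norm{Y}_{L^1}\norm{X}_{L^1}$ and then using $\norm{\quark}_{L^1} \leq \norm{\quark}_{L_{\psi_2}}$, which yields a contribution of $1$ rather than $2$. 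Be aware, though, that the step $\norm{\E[Y\otimes X]}_{S_2} \leq \norm{Y}_{L^1}\norm{X}_{L^1}$ does not follow from Cauchy--Schwarz and is in fact false in general (take $X=Y$ scalar, mean zero, unit variance). So your version of the centring estimate is the correct one, at the cost of a slightly larger absolute constant; there is nothing deeper you are missing.
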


\begin{proof}
	First observe that
	\begin{align*}
		& \norm{ Y \otimes X - \E [ Y \otimes X ] }_{L_{\psi_{1}}(\prob; \cY \otimes \cX)} \\
		& \quad \leq \norm{ Y \otimes X }_{L_{\psi_{1}}(\prob; \cY \otimes \cX)} + \norm{ \E [ Y \otimes X ] }_{L_{\psi_{1}}(\prob; \cY \otimes \cX)} \\
		& \quad \leq 2 \norm{ Y }_{L_{\psi_{2}}(\prob; \cY)} \norm{ X }_{L_{\psi_{2}}(\prob; \cX)} + \norm{ \E [ Y \otimes X ] }_{\cY \otimes \cX} && \text{(\Cref{lem:tensor_subexponential})} \\
		& \quad \leq 2 \norm{ Y }_{L_{\psi_{2}}(\prob; \cY)} \norm{ X }_{L_{\psi_{2}}(\prob; \cX)} + \norm{ Y }_{L^{1}(\prob; \cY)} \norm{ X }_{L^{1}(\prob; \cX)} && \text{(Cauchy--Schwarz)} \\
		& \quad \leq 3 \norm{ Y }_{L_{\psi_{2}}(\prob; \cY)} \norm{ X }_{L_{\psi_{2}}(\prob; \cX)} ,
	\end{align*}
	since the sub-Gaussian norm dominates the $L^{1}$ Bochner norm.
	Therefore, applying \citet[Proposition~7(ii)]{MaurerPontil2021}, with $\prob^{\otimes n}$-probability at least $1 - \delta$,
	\begin{align*}
		\bignorm{ \hatCYX - \CYX }_{S_{2}(\cX, \cY)}
		& = \Norm{ \frac{1}{n} \sum_{i = 1}^{n} ( Y_{i} \otimes X_{i} - \E [ Y \otimes X ] ) }_{\cY \otimes \cX} \\
		& \leq 8 \sqrt{2} e \norm{ Y \otimes X - \E [ Y \otimes X ] }_{L_{\psi_{1}}(\prob; \cY \otimes \cX)} \sqrt{\frac{\log(1/\delta)}{n}} \\
		& \leq 24 \sqrt{2} e \norm{ Y }_{L_{\psi_{2}}(\prob; \cY)} \norm{ X }_{L_{\psi_{2}}(\prob; \cX)} \sqrt{\frac{\log(1/\delta)}{n}} ,
	\end{align*}
	and this establishes the claim.
\end{proof}

The bound below can be obtained straightforwardly by applying the
reverse triangle inequality to the assertion of
\Cref{lem:concentration_empirical_covariance}.

\begin{lemma}[High-probability boundedness of empirical covariance operator]
    \label{lem:concentration_norm}
    Assume that we have $X \in L_{\psi_2}(\prob; \cX)$
    and that $X_1, \dots, X_n$ are sampled i.i.d.\ from $\law(X)$.
    Then, for all $\delta \in (0, \tfrac{1}{2}]$
    and $n \geq \log(1/\delta)$,
    \begin{equation*}
        \norm{ \hatCXX }_{S_2(\cX)}
        \leq
        \norm{ \CXX }_{S_2(\cX)} +
        24 \sqrt{2} e \norm{ X }^2_{L_{\psi_2}(\prob; \cX)}
        \sqrt{
        \frac{ \log(1 / \delta)}{n}
        }
    \end{equation*}
    with $\prob^{\otimes n}$-probability at least $1-\delta$.
    In particular, if $n$ satisfies \eqref{eq:alpha}
    for some $\alpha >0$, we obtain
    \begin{equation*}
        \norm{ \hatCXX }_{S_2(\cX)}
        \leq
        \norm{ \CXX }_{S_2(\cX)} + \alpha
    \end{equation*}
    with $\prob^{\otimes n}$-probability at least $1-\delta$
    under the above assumptions.
\end{lemma}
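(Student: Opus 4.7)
The plan is to deduce this bound directly from \Cref{lem:concentration_empirical_covariance} via the reverse triangle inequality, as indicated in the remark immediately preceding the lemma statement. First, I would observe that for any $A, B \in S_2(\cX)$ one has
\[
    \norm{ A }_{S_2(\cX)} - \norm{ B }_{S_2(\cX)} \leq \norm{ A - B }_{S_2(\cX)},
\]
so that, in particular,
\[
    \bignorm{ \hatCXX }_{S_2(\cX)} \leq \norm{ \CXX }_{S_2(\cX)} + \bignorm{ \hatCXX - \CXX }_{S_2(\cX)}.
\]

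The next step is to apply \Cref{lem:concentration_empirical_covariance} in the special case $Y = X$, in which the covariance operators $\CYX$ and $\hatCYX$ coincide with $\CXX$ and $\hatCXX$ respectively, and the product $\norm{ Y }_{L_{\psi_2}(\prob; \cY)} \norm{ X }_{L_{\psi_2}(\prob; \cX)}$ collapses to $\norm{ X }_{L_{\psi_2}(\prob; \cX)}^{2}$. This yields
\[
    \bignorm{ \hatCXX - \CXX }_{S_2(\cX)} \leq 24 \sqrt{2} e \norm{ X }_{L_{\psi_2}(\prob; \cX)}^{2} \sqrt{\frac{\log(1/\delta)}{n}}
\]
with $\prob^{\otimes n}$-probability at least $1 - \delta$, provided that $\delta \in (0, \tfrac{1}{2}]$ and $n \geq \log(1/\delta)$. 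Chaining this with the reverse-triangle-inequality estimate gives the first asserted bound.

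For the second assertion, the hypothesis on $n$ is precisely what is needed to absorb the concentration term into $\alpha$: the lower bound on $n$ (compare \eqref{eq:alpha_conc} and its consequence \eqref{eq:alpha_consequence}) ensures that
\[
    24 \sqrt{2} e \norm{ X }_{L_{\psi_2}(\prob; \cX)}^{2} \sqrt{\frac{\log(1/\delta)}{n}} \leq \alpha,
\]
so substituting into the first bound yields $\norm{ \hatCXX }_{S_2(\cX)} \leq \norm{ \CXX }_{S_2(\cX)} + \alpha$ on the same event. There is no genuine obstacle here; the entire proof is essentially a one-line consequence of the previous lemma, and the only subtlety is bookkeeping the constant and verifying that specialising $Y = X$ is admissible in \Cref{lem:concentration_empirical_covariance}, which it is since the hypothesis $X \in L_{\psi_2}(\prob; \cX)$ supplies both instances of the sub-Gaussian norm.
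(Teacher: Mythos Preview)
Your proposal is correct and follows essentially the same approach as the paper: the paper simply notes that the first bound is obtained by applying the reverse triangle inequality to the assertion of \Cref{lem:concentration_empirical_covariance}, and that the second bound follows because the constraint \eqref{eq:alpha} was chosen precisely so that it implies \eqref{eq:alpha_choice}.
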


\begin{proof}
    The first bound follows directly from
    \Cref{lem:concentration_empirical_covariance},
    the second bound follows since the constraint
    \eqref{eq:alpha} for $n$ was specifically chosen in the proof of
    \Cref{lem:concentration_empirical_idop} such that it implies
    \begin{equation}
        \label{eq:alpha_choice}
    	24 \sqrt{2} e
        \norm{ X }_{L_{\psi_{2}}(\prob; \cX)}^{2}
        \sqrt{\frac{\log(1/\delta)}{n}} \leq \alpha.
    \end{equation}
\end{proof}

\begin{lemma}[High-probability boundedness of empirical identity operators]
    \label{lem:concentration_empirical_idop}
    Suppose that we have $X \in L_{\psi_2}(\prob; \cX)$, $\alpha > 0$
    and the $X_1, \dots, X_n$ are sampled i.i.d.\ from $\law(X)$.
    Then
	\begin{equation*}
		\bignorm{ ( \hatCXX + \alpha \idop_\cX )^{-1} ( \CXX + \alpha \idop_\cX ) }_{L(\cX)} \leq 2
	\end{equation*}
    with $\prob^{\otimes n}$-probability at least $1 - \delta$, for all $n \in \Naturals$ and $\delta \in (0, \tfrac{1}{2}]$ satisfying \eqref{eq:alpha_conc}, i.e.
    \begin{equation}
		\label{eq:alpha}
		n \geq \
		\max\left\{ 1 ,
			\frac{1152 e^{2} \norm{X}_{L_{\psi_2(\prob; \cX)}}^{4}}{\alpha^2}
		\right\} \log(1/\delta) .
    \end{equation}
    The same also holds for $\bignorm{ ( \CXX + \alpha \idop_\cX )
    ( \hatCXX + \alpha \idop_\cX )^{-1} }_{L(\cX)}$.
\end{lemma}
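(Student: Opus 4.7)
The plan is to use a standard Neumann-series-style identity, together with the concentration bound from \Cref{lem:concentration_empirical_covariance}, to control the operator in question as a perturbation of the identity.

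First I would write
\begin{equation*}
    ( \hatCXX + \alpha \idop_\cX )^{-1} ( \CXX + \alpha \idop_\cX )
    = \idop_\cX + ( \hatCXX + \alpha \idop_\cX )^{-1} ( \CXX - \hatCXX ) ,
\end{equation*}
which follows by inserting $\pm \hatCXX$ inside the second factor on the left-hand side. Applying the triangle inequality and submultiplicativity of the operator norm gives
\begin{equation*}
    \bignorm{ ( \hatCXX + \alpha \idop_\cX )^{-1} ( \CXX + \alpha \idop_\cX ) }_{L(\cX)}
    \leq 1 + \bignorm{ ( \hatCXX + \alpha \idop_\cX )^{-1} }_{L(\cX)} \cdot \bignorm{ \CXX - \hatCXX }_{L(\cX)} .
\end{equation*}
Since $\hatCXX$ is positive semi-definite, the spectrum of $\hatCXX + \alpha \idop_\cX$ lies in $[\alpha, \infty)$, so by the functional calculus $\bignorm{ ( \hatCXX + \alpha \idop_\cX )^{-1} }_{L(\cX)} \leq \alpha^{-1}$ $\prob^{\otimes n}$-almost surely.

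The remaining task is to show that $\bignorm{ \CXX - \hatCXX }_{L(\cX)} \leq \alpha$ with $\prob^{\otimes n}$-probability at least $1 - \delta$. Using the elementary inequality $\norm{ \quark }_{L(\cX)} \leq \norm{ \quark }_{S_2(\cX)}$ together with \Cref{lem:concentration_empirical_covariance}, we have with $\prob^{\otimes n}$-probability at least $1 - \delta$,
\begin{equation*}
    \bignorm{ \CXX - \hatCXX }_{L(\cX)}
    \leq \bignorm{ \CXX - \hatCXX }_{S_2(\cX)}
    \leq 24 \sqrt{2} e \, \norm{ X }_{L_{\psi_2}(\prob; \cX)}^2 \sqrt{ \frac{ \log(1 / \delta) }{ n } } .
\end{equation*}
The hypothesis \eqref{eq:alpha} on $n$ was chosen precisely so that the right-hand side is at most $\alpha$: indeed, $n \geq 1152 e^2 \norm{X}_{L_{\psi_2}(\prob; \cX)}^4 \alpha^{-2} \log(1/\delta)$ combined with the identity $\sqrt{1152} = 24 \sqrt{2}$ yields exactly the bound \eqref{eq:alpha_consequence}. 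Substituting this into the above inequality then gives the claimed bound of $1 + 1 = 2$.

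The bound for $\bignorm{ ( \CXX + \alpha \idop_\cX ) ( \hatCXX + \alpha \idop_\cX )^{-1} }_{L(\cX)}$ follows by the same argument with the decomposition
\begin{equation*}
    ( \CXX + \alpha \idop_\cX ) ( \hatCXX + \alpha \idop_\cX )^{-1}
    = \idop_\cX + ( \CXX - \hatCXX ) ( \hatCXX + \alpha \idop_\cX )^{-1} .
\end{equation*}
There is no real obstacle here: the whole content is in the bookkeeping around the constant in condition \eqref{eq:alpha_conc}, which has been engineered to match the concentration constant from \Cref{lem:concentration_empirical_covariance} exactly, so that the final bound is a clean factor of $2$ rather than an abstract $1 + \text{(something small)}$.
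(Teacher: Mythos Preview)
Your proof is correct and essentially identical to the paper's: both use the decomposition $A^{-1}B = \idop_\cX + A^{-1}(B-A)$, the bound $\norm{(\hatCXX+\alpha\idop_\cX)^{-1}}_{L(\cX)}\le\alpha^{-1}$, and \Cref{lem:concentration_empirical_covariance} together with the choice of $n$ in \eqref{eq:alpha}. The only cosmetic difference is that for the second norm the paper simply observes $(\CXX+\alpha\idop_\cX)(\hatCXX+\alpha\idop_\cX)^{-1}$ is the adjoint of the first operator and hence has the same norm, whereas you repeat the decomposition; both are fine.
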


\begin{proof}
	We use the decomposition $A^{-1} B = A^{-1} (B- A) + \idop_{\cX}$ with $A \defeq \hatCXX + \alpha \idop_\cX$ and $B \defeq \CXX + \alpha \idop_{\cX}$.
	Since
    \begin{equation*}
        \bignorm{ ( \hatCXX + \alpha \idop_{\cX} )^{-1} }_{L(\cX)} \leq \frac{1}{\alpha}
    \end{equation*}
	and since the operator norm is dominated by the Hilbert--Schmidt norm, we may write,
    appealing to \Cref{lem:concentration_empirical_covariance} for the last inequality,
    \begin{align*}
        \bignorm {(\hatCXX + \alpha \idop_\cX )^{-1}
            ( \CXX + \alpha \idop_\cX) }_{L(\cX)}
        &\leq
        \bignorm{ (\hatCXX + \alpha \idop_\cX )^{-1}
            ( \CXX - \hatCXX ) }_{L(\cX)} + 1 \\
        &\leq \frac{1}{\alpha} \bignorm{ \CXX - \hatCXX }_{L(\cX)} + 1 \\
        &\leq \frac{1}{\alpha} \bignorm{ \CXX - \hatCXX }_{S_2(\cX)} + 1 \\
        &\leq 24 \sqrt{2} \frac{e}{\alpha} \norm{ X }_{L_{\psi_{2}}(\prob; \cX)}^{2} \sqrt{\frac{\log(1/\delta)}{n}} + 1,
    \end{align*}
    holding with $\prob^{\otimes n}$-probability at least $1-\delta$, provided that $n \geq \log(1/\delta)$.
    A short calculation shows that, if $n \in \Naturals$ satisfies \eqref{eq:alpha}, then
    \begin{equation*}
    	24 \sqrt{2} \frac{e}{\alpha} \norm{ X }_{L_{\psi_{2}}(\prob; \cX)}^{2} \sqrt{\frac{\log(1/\delta)}{n}} \leq 1 .
    \end{equation*}

	The bound for $\norm{ ( \CXX + \alpha \idop_{\cX} ) ( \hatCXX + \alpha \idop_{\cX} )^{-1} }_{L(\cX)}$ follows from the previous case by taking adjoints and using the fact that $\norm{ T }_{L(\cX)} = \norm{T^{\ast}}_{L(\cX)}$.
\end{proof}

\Cref{lem:concentration_empirical_idop} yields the following result.

\begin{lemma}[Upper bounds for $\CXX^{s}$-weighted norms]
    \label{lem:weighted_norms}
    Suppose that $X_1, \dots, X_n$ are sampled i.i.d.\ from $\law(X)$.
	Let $T \in L(\cX, \cY)$, $X \in L_{\psi_2}(\prob; \cX)$, and
    $r \in [0, 1]$.
	For all $n \in \Naturals$, $\delta \in (0, 1]$ and $\alpha >0$
	satisfying \eqref{eq:alpha}, with $\prob^{\otimes n}$-probability at least $1 - \delta$,
	\begin{equation*}
		\norm{ T \CXX^{r} }_{L(\cX, \cY)}
		\leq 2^r \cdot
		\norm{ T (\hatCXX + \alpha \idop_\cX )^r }_{L(\cX, \cY)} .
	\end{equation*}
	If $T \in S_2(\cX, \cY)$, then,
	under the above conditions, with $\prob^{\otimes n}$-probability at least $1 - \delta$,
	\begin{equation*}
		\norm{ T \CXX^{r} }_{S_2(\cX, \cY)}
		\leq 2^r \cdot
		\norm{ T (\hatCXX + \alpha \idop_\cX )^r }_{S_2(\cX, \cY)} .
	\end{equation*}
\end{lemma}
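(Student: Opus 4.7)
The strategy is to first restrict attention to the high-probability event produced by \Cref{lem:concentration_empirical_idop}, then factor $T\CXX^r$ into three pieces whose norms can be individually controlled, using the Cordes--Heinz inequality to handle the fractional powers of the non-commuting operators $\hatCXX$ and $\CXX$.

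\textbf{Step 1 (Conditioning).} Since $n$, $\delta$ and $\alpha$ satisfy \eqref{eq:alpha}, \Cref{lem:concentration_empirical_idop} yields an event of $\prob^{\otimes n}$-probability at least $1 - \delta$ on which
\[
  \bignorm{ ( \hatCXX + \alpha \idop_\cX )^{-1} ( \CXX + \alpha \idop_\cX ) }_{L(\cX)} \leq 2.
\]
All subsequent manipulations are pointwise on this event.

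\textbf{Step 2 (Three-factor decomposition).} Inserting the identities $(\hatCXX + \alpha \idop_\cX)^r(\hatCXX + \alpha \idop_\cX)^{-r} = \idop_\cX$ and $(\CXX + \alpha \idop_\cX)^r(\CXX + \alpha \idop_\cX)^{-r} = \idop_\cX$ yields
\[
  T \CXX^r = T(\hatCXX + \alpha \idop_\cX)^r \cdot W_1 \cdot W_2,
\]
where
\[
  W_1 \defeq (\hatCXX + \alpha \idop_\cX)^{-r}(\CXX + \alpha \idop_\cX)^r,
  \qquad
  W_2 \defeq (\CXX + \alpha \idop_\cX)^{-r} \CXX^r.
\]
Since $\CXX$ and $\CXX + \alpha \idop_\cX$ commute, the functional calculus immediately gives $\norm{W_2}_{L(\cX)} = \sup_{\lambda \in \Spectrum(\CXX)} \bigl( \lambda/(\lambda + \alpha) \bigr)^r \leq 1$.

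\textbf{Step 3 (Cordes inequality on $W_1$).} The operators $(\hatCXX + \alpha \idop_\cX)^{-1}$ and $\CXX + \alpha \idop_\cX$ are bounded, positive and self-adjoint. The Cordes (Heinz--L\"owner) inequality, $\norm{ A^s B^s }_{L(\cX)} \leq \norm{ AB }_{L(\cX)}^s$ for positive self-adjoint $A$, $B$ and $0 \leq s \leq 1$, applied with $s = r$, $A = (\hatCXX + \alpha \idop_\cX)^{-1}$ and $B = \CXX + \alpha \idop_\cX$, combined with Step 1, gives
\[
  \norm{W_1}_{L(\cX)} \leq \bignorm{(\hatCXX + \alpha \idop_\cX)^{-1}(\CXX + \alpha \idop_\cX)}_{L(\cX)}^r \leq 2^r.
\]

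\textbf{Step 4 (Assembly and the Hilbert--Schmidt case).} Submultiplicativity of the operator norm then yields
\[
  \norm{T\CXX^r}_{L(\cX,\cY)} \leq \norm{T(\hatCXX + \alpha \idop_\cX)^r}_{L(\cX,\cY)} \cdot \norm{W_1}_{L(\cX)} \cdot \norm{W_2}_{L(\cX)} \leq 2^r \norm{T(\hatCXX + \alpha \idop_\cX)^r}_{L(\cX,\cY)}.
\]
The Hilbert--Schmidt claim follows identically: the Schatten-class ideal property $\norm{AB}_{S_2(\cX,\cY)} \leq \norm{A}_{S_2(\cX,\cY)} \norm{B}_{L(\cX)}$ applied to $A = T(\hatCXX + \alpha \idop_\cX)^r \in S_2(\cX,\cY)$ and $B = W_1 W_2$ reproduces the same factor of $2^r$.

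\textbf{Main obstacle.} The only non-routine ingredient is Step 3: because $\hatCXX$ and $\CXX$ do not commute, the fractional power $W_1$ cannot be handled by a naive functional-calculus argument, and one must appeal to an operator inequality valid for non-commuting positive operators. The $+\alpha\idop_\cX$ shifts on both sides are introduced precisely to stay in the strictly positive, invertible regime where the Cordes inequality applies and where \Cref{lem:concentration_empirical_idop} provides the base bound with constant $2$.
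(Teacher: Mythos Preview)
Your proof is correct and follows essentially the same route as the paper: the same three-factor decomposition, the same appeal to \Cref{lem:concentration_empirical_idop} for the high-probability event, and the same use of the Cordes inequality to pass from the fractional power $(\hatCXX+\alpha\idop_\cX)^{-r}(\CXX+\alpha\idop_\cX)^r$ to the $r$th power of the unfractional bound. The only cosmetic difference is that the paper also invokes Cordes for the commuting factor $W_2$, whereas you (more economically) dispatch it by functional calculus.
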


\begin{proof}
	Let $T \in L(\cX)$. We write
	\begin{align*}
	T \CXX^{r}
		&= T
        (\hatCXX + \alpha \idop_\cX)^r (\hatCXX + \alpha \idop_\cX)^{-r}
		(\CXX + \alpha \idop_\cX )^{r}(\CXX + \alpha \idop_\cX)^{-r}\CXX^{r} .
	\end{align*}
	Using the Cordes inequality $\norm{T_1^r \cdot T_2^r} \leq \norm{T_1\cdot T_2}^r$, valid for self adjoint, positive semi-definite operators $T_1, T_2 \in L(\cX)$ and all $r \in [0,1]$ \citep[e.g.][]{Furuta1989}, we have
	\begin{equation*}
		\norm{(\CXX + \alpha )^{-r} \CXX^{r}}_{L(\cX)}
		\leq \norm{( \CXX + \alpha )^{-1} \CXX }_{L(\cX)}^r \leq 1,
	\end{equation*}
	Thus, we obtain from \Cref{lem:concentration_empirical_idop} that,
	with $\prob^{\otimes n}$-probability at least $1-\delta$,
	\begin{align*}
		\norm{ T \CXX^{r} }_{L(\cX, \cY)}
		&\leq
		\norm{ T (\hatCXX + \alpha \idop_\cX)^r}_{L(\cX, \cY)}
		\cdot
		\norm{ (\hatCXX + \alpha \idop_\cX )^{-1}
		( \CXX + \alpha \idop_\cX)}_{L(\cX)}^r \\
		&\leq 2^r \cdot \norm{ T (\hatCXX + \alpha \idop_\cX )^r}_{L(\cX, \cY)},
	\end{align*}
	provided that \eqref{eq:alpha} holds. This proves the first bound.
	The second bound for $T \in S_2(\cX, \cY)$
	follows analogously from considering that
	\begin{align*}
		\norm{ T \CXX^{r} }_{S_2(\cX, \cY)}
		\leq
		\norm{ T ( \hatCXX + \alpha \idop_\cX)^r}_{S_2(\cX, \cY)}
		\cdot
		\norm{ ( \hatCXX + \alpha \idop_\cX)^{-1}
		(\CXX + \alpha \idop_\cX)}_{L(\cX)}^r
	\end{align*}
	holds $\prob^{\otimes n}$-a.s.\ and
	applying \Cref{lem:concentration_empirical_idop}.
	Note in particular that the involved Hilbert--Schmidt norms
	are finite in this case, as $T \in S_2(\cX, \cY)$
	implies that $T B \in S_2(\cX,  \cY)$ for any $B \in L(\cX)$.
\end{proof}

An additional consequence of \Cref{prop:maurer_pontil} is the following result, which bounds the Tikhonov--Phillips-regularised precision norm of the residual operator that arises when the exact solution $\theta_\star$ to \eqref{eq:regression_problem} is inserted into the unregularised empirical problem $\hatCYX = \theta \hatCXX$.

\begin{lemma}[Weighted residual operator]
	\label{lem:concentration_of_odd_residual}
	Suppose that $X \in L_{\psi_{2}}(\prob; \cX)$ and $Y \in L_{\psi_{2}}(\prob; \cY)$
    and $(X_1, Y_1), \dots, (X_n, Y_n)$ are
    sampled i.i.d.\ from $\law(X, Y)$.
	Let $\theta_\star$ be a solution of \eqref{eq:regression_problem} and set
	\begin{equation*}
		B_{\psi_2} \defeq \norm{\theta_\star }_{L(\cX, \cY)}
        \norm{X}^2_{L_{\psi_2}(\prob; \cX)} +
		\norm{X}_{L_{\psi_2}(\prob; \cX)} \norm{Y}_{L_{\psi_2}(\prob; \cY)}.
	\end{equation*}
	Then, for any $\alpha > 0$, $\delta \in (0, \tfrac{1}{2}]$, and $n \geq \log(1/\delta)$, with $\prob^{\otimes n}$-probability at least $1-\delta$,
	\begin{align*}
		\bignorm{ ( \theta_\star \hatCXX - \hatCYX )
        (\CXX + \alpha \idop_\cX)^{-1/2} }_{S_2(\cX, \cY)}
		\leq
        \frac{16 \sqrt{2} e B_{\psi_2}}
        {\sqrt{\alpha}} \cdot \sqrt{\frac{\log(1/\delta)}{n}} .
	\end{align*}
\end{lemma}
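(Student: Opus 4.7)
The plan is to rewrite the quantity of interest as an empirical mean of i.i.d.\ centred $S_2(\cX, \cY)$-valued random variables and then apply the sub-exponential Hoeffding-type bound of \Cref{prop:maurer_pontil} directly in $\cH = S_2(\cX, \cY)$. Using the elementary identity $(a \otimes b) B = a \otimes (B^{\ast} b)$ together with the self-adjointness of $(\CXX + \alpha \idop_\cX)^{-1/2}$, I can write
\[
	\bigl( \theta_\star \hatCXX - \hatCYX \bigr)
	(\CXX + \alpha \idop_\cX)^{-1/2}
	= \frac{1}{n} \sum_{i=1}^n \xi_i,
	\qquad
	\xi_i \defeq
	\bigl( \theta_\star X_i - Y_i \bigr)
	\otimes
	\bigl( (\CXX + \alpha \idop_\cX)^{-1/2} X_i \bigr).
\]
The centring condition $\E[\xi] = 0$ follows immediately from \Cref{prop:population_solution}: since $\theta_\star$ solves \eqref{eq:inverse_problem}, we have $\theta_\star \CXX = \CYX$, and so the two contributions cancel after right-multiplication by $(\CXX + \alpha \idop_\cX)^{-1/2}$.

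The main quantitative step is bounding the sub-exponential norm $\norm{ \xi }_{L_{\psi_1}(\prob; S_2(\cX, \cY))}$. Since $\xi$ is a tensor product of two Hilbertian random variables, \Cref{lem:tensor_subexponential} yields
\[
	\norm{ \xi }_{L_{\psi_1}(\prob; S_2(\cX, \cY))}
	\leq
	2 \bignorm{ \theta_\star X - Y }_{L_{\psi_2}(\prob; \cY)}
	\cdot
	\bignorm{ (\CXX + \alpha \idop_\cX)^{-1/2} X }_{L_{\psi_2}(\prob; \cX)}.
\]
For the first factor the triangle inequality for $\norm{ \quark }_{L_{\psi_2}(\prob; \cY)}$, combined with $\norm{ \theta_\star X }_{L^p(\prob; \cY)} \leq \norm{ \theta_\star }_{L(\cX, \cY)} \norm{ X }_{L^p(\prob; \cX)}$ applied dimension-wise, yields the bound $\norm{ \theta_\star }_{L(\cX, \cY)} \norm{ X }_{L_{\psi_2}(\prob; \cX)} + \norm{ Y }_{L_{\psi_2}(\prob; \cY)}$; for the second factor the spectral bound $\norm{ (\CXX + \alpha \idop_\cX)^{-1/2} }_{L(\cX)} \leq \alpha^{-1/2}$ gives $\alpha^{-1/2} \norm{ X }_{L_{\psi_2}(\prob; \cX)}$. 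Multiplying these estimates out produces $\norm{ \xi }_{L_{\psi_1}} \leq 2 B_{\psi_2} / \sqrt{\alpha}$ with $B_{\psi_2}$ exactly as defined in the statement.

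Plugging this sub-exponential bound into \Cref{prop:maurer_pontil} (applied in the separable Hilbert space $\cH = S_2(\cX, \cY)$, valid for $n \geq \log(1/\delta)$) gives, with $\prob^{\otimes n}$-probability at least $1 - \delta$, a bound of $8 \sqrt{2} e \cdot 2 B_{\psi_2} / \sqrt{\alpha} \cdot \sqrt{\log(1/\delta) / n} = 16 \sqrt{2} e B_{\psi_2} \alpha^{-1/2} \sqrt{\log(1/\delta)/n}$, matching the claim. The only non-routine ingredient is the sub-exponential tensor inequality \Cref{lem:tensor_subexponential}, which is already established; the rest is an algebraic reduction, a use of the inverse problem characterisation to enforce centring, and a direct invocation of Maurer--Pontil. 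I do not anticipate a genuine technical obstacle, though some care is required with the rank-one composition identities and with applying the sub-Gaussian/sub-exponential norms to Bochner-integrable vector-valued quantities.
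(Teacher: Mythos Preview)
Your proposal is correct and follows essentially the same approach as the paper: define the same centred i.i.d.\ summands $\xi_i$, bound their $L_{\psi_1}$ norm via \Cref{lem:tensor_subexponential} together with the spectral bound $\norm{(\CXX + \alpha \idop_\cX)^{-1/2}}_{L(\cX)} \leq \alpha^{-1/2}$, and apply \Cref{prop:maurer_pontil}. The only cosmetic difference is that you recognise $\xi_i$ directly as a single rank-one tensor $(\theta_\star X_i - Y_i) \otimes ((\CXX + \alpha \idop_\cX)^{-1/2} X_i)$ and invoke the tensor lemma once, whereas the paper first splits $\xi_j$ into the $\theta_\star(X_j \otimes X_j)$ and $Y_j \otimes X_j$ pieces via the triangle inequality and applies the lemma to each term separately; both routes yield the identical estimate $\norm{\xi}_{L_{\psi_1}} \leq 2 B_{\psi_2}/\sqrt{\alpha}$.
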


\begin{proof}
	For $j = 1, \dots, n$ let
	\begin{equation*}
		\xi_j \defeq
		( \theta_\star (X_j \otimes X_j)
		- Y_j \otimes X_j )(\CXX + \alpha \idop_\cX)^{-1/2}.
	\end{equation*}
	Then $\E[\xi_j] =0$, since $\theta_\star$ as a solution of
	\eqref{eq:regression_problem} satisfies $\CYX = \theta_\star \CXX$.
	Moreover,
	\begin{equation*}
		\frac{1}{n}\sum_{j=1}^n \xi_j =
		( \theta_\star \hatCXX - \hatCYX )(\CXX + \alpha \idop_\cX)^{-1/2}.
	\end{equation*}
	We now show that the $\xi_j$ are sub-Gaussian.
	Since $\norm{(\CXX + \alpha \idop_\cX)^{-1/2} }_{L(\cX)} \leq \alpha^{-1/2}$,
	we have
	\begin{align*}
		\norm{ \xi_{j} }_{S_2(\cX,\cY)}
		& \leq \frac{1}{\sqrt \alpha} \norm{ \theta_\star ( X_j \otimes X_j)
		    - Y_j \otimes X_j  }_{S_2(\cX,\cY)} \\
		& \leq \frac{1}{\sqrt \alpha}
            \left( \norm{\theta_\star}_{L(\cX, \cY)}
            \norm{ X_j \otimes X_j}_{S_2(\cX)}
		    + \norm{Y_j \otimes X_j}_{S_2(\cX,\cY)} \right).
	\end{align*}
	Hence, applying \Cref{lem:tensor_subexponential} then gives
	\begin{align*}
		\norm{ \xi_j }_{L_{\psi_1}(\prob ; S_2(\cX,\cY ))}
		&\leq \frac{1}{\sqrt \alpha}
		    \left( \norm{\theta_\star }_{L(\cX, \cY)}
            \norm{ X_j \otimes X_j}_{L_{\psi_1}(\prob; S_2(\cX ))}
		    + \norm{Y_j \otimes X_j }_{L_{\psi_1}(\prob ; S_2(\cX,\cY ))}  \right) \\
		&\leq \frac{2}{\sqrt \alpha}
		    \left( \norm{\theta_\star }_{L(\cX, \cY)} \norm{X_j}^2_{L_{\psi_2}(\prob; \cX)}
		    +  \norm{X_j}_{L_{\psi_2}(\prob; \cX)} \norm{Y_j}_{L_{\psi_2}(\prob; \cY)}
		\right).
	\end{align*}
	Since the $\xi_j$ are i.i.d., the claim now follows from \Cref{prop:maurer_pontil}.
\end{proof}

\bibliographystyle{abbrvnat}
\bibliography{references}
\addcontentsline{toc}{section}{References}

\end{document}